\numberwithin{equation}{section}
\newtheorem{thm}{Theorem}[section]
\newtheorem{lemma}[thm]{Lemma}
\newtheorem{prop}[thm]{Proposition}
\theoremstyle{definition}
\newtheorem{rem}[thm]{Remark}
\theoremstyle{definition}
\newtheorem{defn}[thm]{Definition}
\theoremstyle{remark}
\newcommand{\field}[1]{\mathbb{#1}}
 \newcommand{\Z}{\field{Z}} \newcommand{\R}{\field{R}} \newcommand{\C}{\field{C}} \newcommand{\N}{\field{N}}
  \newcommand{\K}{\field{K}}
  \newcommand{\abs}[1]{\lvert#1\rvert}
\newcommand{\be}{\begin{align}\begin{aligned}}
\newcommand{\cliffu}{c({\scriptstyle\frac{\partial}{\partial u}})}
\newcommand{\hatcliffu}{\hat{c}({\scriptstyle\frac{\partial}{\partial u}})}
\newcommand{\hh}{\mathscr{H}^\bullet}
\newcommand{\hhe}{\mathscr{H}}
\newcommand{\End}{{\rm End}}
\newcommand{\hhl}{{\mathscr{H}^\bullet_{L^2}}}
\newcommand{\Sp}{{\rm Sp}}
\newcommand{\LL}{\mathscr{L}}
\newcommand{\cntrtu}{i_{\scriptstyle\frac{\partial}{\partial u}}}
\newcommand{\hhar}{\mathscr{H}^\bullet_{\rm abs/rel}}
\newcommand{\n}{\nabla}
\newcommand{\smooth}{\mathscr{C}^\infty}
\newcommand{\derpar}[2]{\frac{\partial #1}{\partial #2}}
\newcommand{\e}{\varepsilon}
\newcommand{\continu}{\mathscr{C}^0}
\newcommand{\im}{{\rm Im}}
\newcommand{\bd}{{\rm bd}}
\newcommand{\hhel}{\mathscr{H}_{L^2}}
\DeclareMathOperator{\Ker}{Ker}  
 \DeclareMathOperator{\rk}{rk} \DeclareMathOperator{\Id}{Id} 
\DeclareMathOperator{\tr}{Tr}
\begin{document}

\title{Scattering matrix and analytic torsion}
\date{\today}

\author{Martin PUCHOL}
\address{D{\'e}partement de Math{\'e}matiques,
B{\^a}timent IMO, Bureau 2G3,
Facult{\'e} des Sciences d'Orsay, Universit{\'e} Paris-Sud,
F-91045 Orsay Cedex}
\email{martin.puchol@math.cnrs.fr}

\author{Yeping ZHANG}
\address{Department of Mathematics,
Faculty of Science,
Kyoto University,
Kitashirakawa Oiwake-cho, Sakyo-ku,
Kyoto 606-8502, Japan}
\email{yp.zhang@math.kyoto-u.ac.jp}

\author{Jialin Zhu}
\address{Shanghai Center for Mathematical Sciences, Fudan University, Shanghai 200433, P.R. China}
\email{jialinzhu@fudan.edu.cn}

\begin{abstract}
We consider a compact manifold with a piece isometric to a (finite length) cylinder.
By making the length of the cylinder tend to infinity,
we obtain an asymptotic gluing formula for the zeta determinant of the Hodge-Laplacian and
an asymptotic expansion of the $L^2$-torsion of the corresponding Mayer--Vietoris exact sequence.
As an application,
we give a purely analytic proof of the gluing formula for analytic torsion.

\noindent Keywords: analytic torsion, scattering theory.

\noindent Classification: global analysis, analysis on manifolds.
\end{abstract}

\maketitle
\tableofcontents
\setcounter{section}{-1}

\section{Introduction}\label{s.0}
We consider
a flat vector bundle $F$
equipped with a Hermitian metric $h^F$
over a compact Riemannnian manifold $Z$.
The associated Ray-Singer analytic torsion \cite{RaySing71}
is a weighted product
of the zeta determinants of the Hodge-Laplacian
on the vector space of $p$-forms with values in $F$,
$\Omega^p(Z,F)$ ($p=0,\cdots,\dim Z$).
The Ray-Singer metric \cite{BZ92} on $\det H^\bullet(Z,F)$
is the product of the Ray-Singer analytic torsion
and the $L^2$-metric (induced by the Hodge theory).
The Ray-Singer metric has a topological counterpart, the Reidemeister metric \cite{Reid35}.
Ray and Singer \cite{RaySing71} conjectured that the two metrics coincide
for unitary flat vector bundle.
Cheeger \cite{Chg} and M{\"u}ller \cite{Mu78} independently proved the conjecture.
This result is now called Cheeger-M{\"u}ller theorem.
Bismut, Zhang and M{\"u}ller simultaneously considered the extension of this theorem.
M{\"u}ller \cite{Mu93} extended the theorem to unimodular flat bundles,
i.e., $\det F$ is unitary.
Bismut and Zhang \cite{BZ92} extended the theorem to arbitrary flat vector bundles.
There are also various extensions to equivariant cases \cite{LoRo91,Luck93,BisZh94}.

Now we suppose that there is a hypersurface $Y \subseteq Z$
cutting $Z$ into two pieces,
which we denote by $Z_1$ and $Z_2$.
Ray and Singer \cite{RaySing71} proposed that
there should be a gluing formula
expressing the analytic torsion of $Z$
in terms of the analytic torsions of $Z_1$ and $Z_2$.
They proposed such a formula as an intermediate step towards the Ray-Singer conjecture.
Dramatically, the conjecture was proved by other means,
and the gluing formula was proved as a consequence of the conjecture.
L{\"u}ck \cite{Luck93} proved the gluing formula for unitarily flat vector bundles.
The proof is based on the Cheeger-M{\"u}ller theorem and \cite{LoRo91}.
Br{\"u}ning and Ma \cite{BruMa12} proved the gluing formula for arbitrary flat vector bundles.
The proof is based on
an equivariant extension of the Cheeger-M{\"u}ller theorem
by Bismut and Zhang \cite{BisZh94}.
There are also purely analytic proofs,
i.e., without using Cheeger-M{\"u}ller theorem or its extension.
Vishik \cite{Vishik95} found an analytic proof for unitarily flat vector bundles.
There are also related works by Hassell \cite{Has} and Lesch \cite{Lesch13}.

Both the Ray-Singer analytic torsion and the Reidemeister torsion
were generalized to the relative context,
i.e., a flat vector bundle over a smooth fibration.
Bismut and Lott \cite{BL} constructed analytic torsion forms (Bismut-Lott torsion).
Igusa \cite{Igusa02} and Dwyer, Weiss,  Williams \cite{DwWeiWill03}
independently constructed higher Reidemeister torsions,
which turned out to be equivalent \cite{BDKW11}
up to a universal cohomology class.
Bismut and Goette  \cite{BGo} proved
a higher version of Cheeger-M{\"u}ller/Bismut-Zhang theorem
under the condition that there exists a fiberwise Morse function.
However, the higher version of Cheeger-M{\"u}ller/Bismut-Zhang theorem in general remains unknown.
Igusa \cite{Igu08} axiomatized the higher torsion invariants.
Two torsion invariants satisfying the axiomatization are equivalent
up to a universal cohomology class.
Igusa also showed that higher Reidemeister torsion satisfies his axiomatization.
It was conjectured that
there should exist a gluing formula for Bismut-Lott torsion
\footnote{conference on the higher torsion invariants, G{\"o}ttingen, September 2003}.
The gluing formula conjectured together with the work of Ma \cite{Ma02}
could show that Bismut-Lott torsion satisfies Igusa's axiomatization
(cf. \cite{Goette09} for a survey).
Zhu \cite{Zhu15} proved the gluing formula for Bismut-Lott torsion
under the condition that there exists a fiberwise Morse function.
The proof is based on the work of Bismut and Goette \cite{BGo},
which we mentioned earlier.
Zhu \cite{Zhu16} also proved the gluing formula for Bismut-Lott torsion
under the condition that $H^\bullet(Y,F)=0$.
This proof is analytical.

In this paper,
we give an analytic proof of the gluing formula
for Ray-Singer torsion.
Our approach would eventually lead to a proof of the gluing formula
for Bismut-Lott torsion in full generality \cite{PZZ-II}.
Our proof is based on the adiabatic limit.
We assume that the Riemannian metric is product on a tubular neighborhood of $Y\subseteq Z$,
which we identify with a cylinder $(-1,1)\times Y$.
In the adiabatic limit,
we deform the Riemannian metric
such that the length of the cylinder tends to infinity.

Note that the study of the adiabatic limit of $\eta$-invariant started first
in Bismut-Freed \cite{BF1,BF2}, Bismut-Cheeger \cite{BC}
by blowing up the metric on the base manifold of a fibration.
The adiabatic limit used in our paper first appeared
in the work of Douglas-Wojciechowski \cite{DouWoj91} on  $\eta$-invariant.
Cappell-Lee-Miller \cite{CLM96} studied the behavior of
the eigenvalues of a Dirac operator in the adiabatic limit.
They classified the eigenvalues according to their decaying rate:
large eigenvalues, polynomially decaying eigenvalues and exponentially decaying eigenvalues.
In M{\"u}ller's work \cite{Muller94} on $\eta$-invariants associated with manifolds with cylindrical ends, scattering matrix was implied.
Based on M{\"u}ller's work,
Park-Wojciechowski \cite{PaWo06} expressed the asymptotics of the polynomially decaying eigenvalues
in terms of the scattering matrix.

Now we explain our results in more detail.

Let $Z$ be a compact manifold.
Let $Y\subseteq Z$ be a hypersurface.
We suppose that $Y$ cuts $Z$ into two pieces,
which we denote by $Z_1$ and $Z_2$
(see \textsection \ref{ch1-2}, \textsection \ref{ch3-1}).
Then
\begin{equation}
\label{eq-ch0-glue}
Z=Z_1\cup_Y Z_2.
\end{equation}
Let $g^{TZ}$ be a Riemannian metric on $Z$.
Let $g^{TY}$ be the induced Riemannian metric on $Y$.
We identify a tubular neighborhood of $Y$ with $(-1,1)\times Y$
such that $(-1,0]\times Y\subset Z_1$ and $[0,1)\times Y\subset Z_2$.
Let $(u,y)\in (-1,1)\times Y$ be the coordinates.
We assume that $g^{TZ}$ is product on $(-1,1)\times Y$, i.e.,
\begin{equation}
\label{eq-ch0-riemann-metric}
g^{TZ}\big|_{(-1,1)\times Y} = du^2 + g^{TY}.
\end{equation}

Let $(F,\n^F)$ be a flat complex vector bundle over $Z$ with flat connection $\n^F$,
i.e., $\big(\n^F\big)^2=0$.
Let $\pi_Y : \,(-1,1)\times Y \rightarrow Y$ be the canonical projection.
As $\n^F$ is flat,
we can identify $(F,\n^F)\big|_{(-1,1)\times Y}$ with
the pullback $\pi_Y^*\big(F\big|_Y,\n^F\big|_Y\big)$
via the parallel transport with respect to $\n^F$
along the curves $\gamma_y :t\in\,(-1,1)\mapsto (t,y)\in (-1,1)\times Y$.

Let $h^F$ be a Hermitian metric on $F$.
We assume that $h^F$ is product on $(-1,1)\times Y$, i.e.,
\begin{equation}
\label{eq-ch0-herm-metric}
(F,\n^F,h^F)\big|_{(-1,1)\times Y} = \pi^*_Y \left(F\big|_Y, \n^F\big|_Y, h^F\big|_Y \right).
\end{equation}

For $R > 2$, set $Z_{1,R} = Z_1 \cup_Y ([0,R]\times Y)$,
where we identify $\partial Z_1$ with $\{0\}\times Y$.
We extend
the Riemannian metric $g^{TZ}$,
the flat vector bundle $(F,\n^F)$ and
the Hermitian metric $h^F$
to $Z_{1,R}$
such that \eqref{eq-ch0-riemann-metric}-\eqref{eq-ch0-herm-metric} hold on $(-1,R)\times Y$.
Set $Z_{2,R}=([-R,0]\times Y) \cup_Y Z_2$.
Set
\begin{equation}
\label{eq-ch0-glue-R}
Z_R = Z_{1,R}\cup_Y Z_{2,R},
\end{equation}
where we identify $\partial Z_{1,R} = \{R\}\times Y$ with $\partial Z_{2,R} = \{-R\}\times Y$
(see Figure \ref{lab-fig-intro}).
We extend $g^{TZ}$, $(F,\n^F)$ and $h^F$ to $Z_{2,R}$ and $Z_R$ in the same way.
Note that we take $R>2$ to avoid confusions
between $Z_1$, $Z_2$ and $Z_R$ for $R = 1,2$.
In this paper,
we will always consider $Z_R$ with $R$ large enough.


\begin{figure}[h]
\setlength{\unitlength}{0.75cm}
\centering
\begin{picture}(18,6.5)
\qbezier(1,2)(3,1)(5,2) 
\qbezier(1,4)(3,5)(5,4) 
\qbezier(1,2)(-0.5,3)(1,4) 
\qbezier(2,3.2)(3,2.7)(4,3.2) 
\qbezier(2.5,3)(3,3.2)(3.5,3) 
\qbezier(5,2)(5.5,2.3)(6,2.4) 
\qbezier(5,4)(5.5,3.7)(6,3.6) 
\qbezier(13,2)(15,1)(17,2) 
\qbezier(13,4)(15,5)(17,4) 
\qbezier(17,2)(18.5,3)(17,4) 
\qbezier(14,3.2)(15,2.7)(16,3.2) 
\qbezier(14.5,3)(15,3.2)(15.5,3) 
\qbezier(13,2)(12.5,2.3)(12,2.4) 
\qbezier(13,4)(12.5,3.7)(12,3.6) 
\qbezier(6,2.4)(9,2.4)(12,2.4) 
\qbezier(6,3.6)(9,3.6)(12,3.6) 
\qbezier(6,2.4)(5.7,3)(6,3.6) 
\qbezier[30](6,2.4)(6.3,3)(6,3.6) 
\qbezier(9,2.4)(8.7,3)(9,3.6) 
\qbezier[30](9,2.4)(9.3,3)(9,3.6) 
\qbezier(12,2.4)(11.7,3)(12,3.6) 
\qbezier[30](12,2.4)(12.3,3)(12,3.6) 
\put(6.9,3){\vector(-1,0){1}} 
\put(7.9,3){\vector(1,0){1}} 
\put(9.9,3){\vector(-1,0){1}} 
\put(10.8,3){\vector(1,0){1}} 
\put(9,5.3){\vector(0,-1){1.5}} 
\put(7.15,2.8){$R$} 
\put(10.15,2.8){$R$} 
\put(2.8,2){$Z_1$} 
\put(14.8,2){$Z_2$} 
\put(1,4.8){$\overbrace{\hspace{55mm}}^{}$} \put(4.1,5.4){$Z_{1,R}$}  
\put(9.7,4.8){$\overbrace{\hspace{55mm}}^{}$} \put(12.9,5.4){$Z_{2,R}$}  
\put(1.5,1.2){$\underbrace{\hspace{113mm}}_{}$} \put(8.8,0.3){$Z_R$}  
\put(6.8,5.5){$\partial Z_{1,R} = Y = \partial Z_{2,R}$}
\end{picture}
\caption{}
\label{lab-fig-intro}
\end{figure}

We denote by $\Omega^\bullet(Z_R,F)$ the vector space of differential forms on $Z_R$ twisted by $F$.
Let $d^F : \Omega^\bullet(Z_R,F) \rightarrow \Omega^{\bullet+1}(Z_R,F)$
be the de Rham operator induced by $\n^F$.
Let $d^{F,*}$ be the formal adjoint of $d^F$.
The Hodge-de Rham operator is defined as
\begin{equation}
\label{eq-ch0-def-DR}
D^F_{Z_R} = d^F + d^{F,*} .
\end{equation}
Let $D^F_{Z_{1,R}}$ and $D^F_{Z_{2,R}}$
be the Hodge-de Rham operators
on $\Omega^\bullet_\mathrm{rel}(Z_{1,R},F)$
(with the relative boundary condition)
and $\Omega^\bullet_\mathrm{abs}(Z_{2,R},F)$
(with the absolute boundary condition, see \eqref{eq-ch1-def-bd-D}).

Let $N$ be the number operator on $\Omega^\bullet(Z_R,F)$, i.e.,
$N\omega = p\omega$ for $\omega\in\Omega^p(Z_R,F)$.
Let $P: \Omega^\bullet(Z_R,F) \rightarrow \Ker\big(D^{F,2}_{Z_R}\big)$
be the orthogonal projection (with respect to the $L^2$-metric).
The zeta-type function associated with $D^{F,2}_{Z_R}$ is defined as follows,
for $s\in\{\C\;:\; \mathrm{Re}(s)>\frac{1}{2}\dim Z\}$,
\begin{equation}
\label{eq-ch0-def-zeta}
\theta_R (s) = -\tr\left[(-1)^N N \big(D^{F,2}_{Z_R}\big)^{-s} \left(1-P\right) \right] .
\end{equation}
The function $\theta_R(s)$ admits a meromorphic continuation to the whole complex plane,
which is holomorphic at $0\in\C$
(see Definition \ref{def-ch1-zeta} for an alternative definition).
Let $\theta_{1,R}$ and $\theta_{2,R}$ be the zeta-type functions associated with $D^{F,2}_{Z_{1,R}}$ and $D^{F,2}_{Z_{2,R}}$.

Let $D^F_Y$ be the Hodge-de Rham operator on $\Omega^\bullet(Y,F)$.
Let $\mathrm{Sp}\big(D^F_Y\big)$ be the spectrum of $D^F_Y$.
Set
\begin{equation}
\delta_Y = \min\big\{\abs{\lambda}\;:\;0\neq\lambda\in \mathrm{Sp}\big(D^F_Y\big)\big\}.
\end{equation}
Set $\hh(Y,F) = \Ker\big(D^F_Y\big) \subseteq\Omega^\bullet(Y,F)$.
Let $C_1(\cdot)$ (resp. $C_2(\cdot)$) be the scattering matrix
associated with $D^{F}_{Z_{1,\infty}}$ (resp. $D^{F}_{Z_{2,\infty}}$)
in the sense of \cite{Muller94},
which are analytic functions on $(-\delta_Y,\delta_Y)$
with values in $\mathrm{End}\big(\hh(Y,F) \oplus \hh(Y,F)du\big)$
(see \textsection \ref{ch2-2}).
Set
\begin{equation}
\label{eq-ch0-def-C12}
C_{12} = \left(C_2^{-1}C_1 \right)(0), \hspace{5mm}
C_{12}^p = C_{12}\big|_{\mathscr{H}^p(Y,F) \oplus \mathscr{H}^{p-1}(Y,F)du},
\hspace{2.5mm} \text{ for } p=0,\cdots,\dim Z.
\end{equation}
Let $H^\bullet_\mathrm{rel}(Z_1,F)$ (resp. $H^\bullet_\mathrm{abs}(Z_2,F)$)
be the relative (resp. absolute) cohomology
of $Z_1$ (resp. $Z_2$) with coefficients in $F$.
In the whole paper,
we denote $n=\dim Z$.
Set
\begin{align}
\label{eq-ch0-def-chi}
\begin{split}
& {\chi}' = \sum_{p=0}^n (-1)^p p
\Big\{ \dim H^p(Z,F)- \dim H^p_{\rm rel}(Z_1,F) - \dim H^p_{\rm abs}(Z_2,F) \Big\},\\
&{\chi}'(C_{12}) = \sum_{p=0}^n (-1)^p p\, \dim \Ker \left( C_{12}^p - \Id \right),\\
&\chi(Y) = \sum_{p=0}^{n-1} (-1)^p \dim H^p(Y,\C).
\end{split}
\end{align}

For a square matrix $A$, we denote by $\det^*(A)$ the product of its non-zero eigenvalues.
Let $\rk(F)$ be the rank of $F$.

\begin{thm}
\label{thm-ch0-1}
For any $\e>0$, as $R\to +\infty$,
\begin{align}
\begin{split}
& \theta_R{}'(0) - \theta_{1,R}{}'(0) -\theta_{2,R}{}'(0) \\
& =  2 \chi' \log R + \Big(\chi(Y)\rk(F) +\chi'(C_{12})\Big)\log2 \\
& \hspace{5mm} + \sum_{p=0}^{\dim Z} \frac{p}{2}(-1)^p \log {\det}^*\Big(\frac{2- C_{12}^p - (C_{12}^p)^{-1}}{4}\Big)
+ \mathscr{O}(R^{-1+\e}).
\end{split}
\end{align}
\end{thm}

For a finite dimensional complex vector space $E$,
we set $\det E = \Lambda^{\mathrm{max}} E := \Lambda^{\dim E} E$.
For a complex line $\lambda$,
we denote by $\lambda^{-1}$ its dual line.
For a $\Z$-graded vector space $E^\bullet = \bigoplus_{k=0}^m E^k$,
we define
\begin{equation}
\label{eq-ch0-def-det}
\det E^\bullet = \bigotimes_{k=0}^m \left( \det E^k \right)^{(-1)^k} .
\end{equation}
For an exact sequence of finite dimensional complex vector spaces
\begin{equation}
\label{eq-ch0-exact-sequence}
(V^\bullet,\partial) :
0 \rightarrow
V^0 \overset{\partial} \rightarrow
V^1 \overset{\partial}\rightarrow
\cdots \overset{\partial} \rightarrow
V^m \rightarrow 0,
\end{equation}
there is a canonical section $\varrho \in \det V^\bullet$ defined as follows:
let $m_j = \dim \im \left( \partial \big|_{V^j} \right)$,
we choose $(s_{j,k})_{1\leqslant k \leqslant m_j}$ in $V^j$,
which form a basis of $V^j/\partial V^{j-1}$,
set $\wedge_k s_{j,k} := s_{j,1}\wedge\cdots\wedge s_{j,m_j}$,
\begin{equation}
\label{eq-ch0-def-rho}
\varrho =
\bigotimes_{j=0}^m \big( (\wedge_k \partial s_{j-1,k}) \wedge (\wedge_k s_{j,k}) \big)^{(-1)^j}
\in \det V^\bullet.
\end{equation}
Let $h^{V^\bullet} = \bigoplus_{k=0}^m h^{V^k}$ be a Hermitian metric on $V^\bullet$.
Let $\mathscr{T}(V^\bullet,\partial,h^{V^\bullet})$ be the analytic torsion
associated with $(V^\bullet,\partial,h^{V^\bullet})$ (see \cite[Def. 1.4]{BGS1}).
Let $\lVert\,\cdot\,\rVert_{\det V^\bullet}$ be the norm on $\det V^\bullet$
induced by $h^{V^\bullet}$.
By \cite[Prop. 1.5]{BGS1},
we have
\begin{equation}
\label{eq-ch0-torsion}
\mathscr{T}(V^\bullet,\partial,h^{V^\bullet}) =
\big\lVert\varrho\big\rVert_{\det V^\bullet}.
\end{equation}

We consider the following Mayer-Vietoris exact sequence
(see \textsection \ref{ch1-2})
\begin{equation}
\label{eq-ch0-mv}
\cdots \rightarrow
H^p_\mathrm{rel}(Z_{1,R},F) \rightarrow
H^p(Z_R,F) \rightarrow
H^p_\mathrm{abs}(Z_{2,R},F) \rightarrow
\cdots.
\end{equation}
We denote by $\mathscr{T}_R$ the analytic torsion associated with
the exact sequence \eqref{eq-ch0-mv} equipped with $L^2$-metrics
(induced by Hodge theory).

\begin{thm}
\label{thm-ch0-2}
As $R\rightarrow+\infty$,
\begin{align}
\label{eq-thm-ch0-2}
\begin{split}
\mathscr{T}_R =
2^{\chi'(C_{12})/2} R^{\chi'} \prod_{p=0}^n {\det}^\ast \Big(\frac{2-C^p_{12}-(C^p_{12})^{-1}}{4}\Big)^{\frac{p}{4}(-1)^p}
+ \mathscr{O}(R^{{\chi}'-1}).
\end{split}
\end{align}
\end{thm}

Let $\big\lVert\,\cdot\,\big\rVert^{L^2}_{\det H^\bullet(Z_R,F)}$
be the $L^2$-metric on $\det H^\bullet(Z_R,F)$.
The Ray-Singer metric on $\det H^\bullet(Z_R,F)$ is defined as
\begin{equation}
\label{eq-ch0-def-rs-metric}
\big\lVert\,\cdot\,\big\rVert^\mathrm{RS}_{\det H^\bullet(Z_R,F)} =
\big\lVert\,\cdot\,\big\rVert^{L^2}_{\det H^\bullet(Z_R,F)}\exp\Big(\frac{1}{2}{\theta_R}'(0)\Big) .
\end{equation}
We define the Ray-Singer metrics
$\big\lVert\,\cdot\,\big\rVert^\mathrm{RS}_{\det H^\bullet_\mathrm{rel}(Z_{1,R},F)}$
and $\big\lVert\,\cdot\,\big\rVert^\mathrm{RS}_{\det H^\bullet_\mathrm{abs}(Z_{2,R},F)}$
in the same way (see \textsection \ref{ch1-1}).

Set
\begin{equation}
\label{eq-ch0-def-lambda}
\lambda_R(F) =
\det H_\mathrm{rel}^\bullet(Z_{1,R},F) \otimes
\big( \det H^\bullet(Z_R,F) \big)^{-1} \otimes
\det H^\bullet_\mathrm{abs}(Z_{2,R},F).
\end{equation}
Let $\varrho_R\in \lambda_R(F)$ be the canonical section
(see \eqref{eq-ch0-def-rho}, \eqref{eq-ch0-mv}).
Let $\big\lVert\,\cdot\,\big\rVert^\mathrm{RS}_{\lambda_R(F)}$ be the metric on $\lambda_R(F)$
induced by $\big\lVert\,\cdot\,\big\rVert^\mathrm{RS}_{\det H^\bullet(Z_R,F)}$,
$\big\lVert\,\cdot\,\big\rVert^\mathrm{RS}_{\det H^\bullet_\mathrm{rel}(Z_{1,R},F)}$
and $\big\lVert\,\cdot\,\big\rVert^\mathrm{RS}_{\det H^\bullet_\mathrm{abs}(Z_{2,R},F)}$.

The gluing formula of the analytic torsion established in \cite[Theorem 0.3]{BruMa12}
is stated as follows.

\begin{thm}
\label{thm-ch0-3}
We have
\begin{equation}
\label{eq-thm-ch0-3}
\big\lVert \varrho_R \big\rVert^\mathrm{RS}_{\lambda_R(F)} =
2^{-\frac{1}{2}\chi(Y)\rk(F)} .
\end{equation}
\end{thm}

In this paper,
we give a different proof of Theorem \ref{thm-ch0-3}.
In fact, \eqref{eq-thm-ch0-3} is equivalent to the following identity
\begin{equation}
\label{eq-thm-ch0-3-equiv}
\frac{1}{2}{\theta_R}'(0)
- \frac{1}{2}\theta_{1,R}'(0)
- \frac{1}{2}\theta_{2,R}'(0)
- \log \mathscr{T}_R
= \frac{1}{2}\chi(Y)\rk(F)\log 2.
\end{equation}
By Theorem \ref{thm-ch0-1}, \ref{thm-ch0-2},
the left hand side of \eqref{eq-thm-ch0-3-equiv}
tends to $\frac{1}{2}\chi(Y)\rk(F)\log 2$ as $R\rightarrow +\infty$.
On the other hand,
by the anomaly formulas
(\cite[Theorem 0.1]{BZ92}, \cite[Theorem 0.1]{BruMa06})
the left hand side of \eqref{eq-thm-ch0-3-equiv}
is independent of $R$.
Then \eqref{eq-thm-ch0-3-equiv} follows.

This paper is organized as follows.
In \textsection \ref{ch1},
we recall the definition of Ray-Singer metrics
and introduce the Mayer-Vietoris exact sequence.
In \textsection \ref{ch2},
we study the spectrum of Hodge-Laplacians on manifolds with cylindrical ends.
In \textsection \ref{ch3},
we study asymptotics of the spectrum of the Hodge-Laplacian on $Z_R$ as $R\rightarrow+\infty$.
In \textsection \ref{ch4},
we prove Theorem \ref{thm-ch0-1}.
In \textsection \ref{ch5},
we prove Theorem \ref{thm-ch0-2}.
We also give a (more detailed) proof of Theorem \ref{thm-ch0-3}.

\hfill\\
\noindent\textbf{Notations.}
Hereby we summarize some frequently used notations in this paper.

A manifold (with or without boundary) is usually denoted by $X$, $Y$ or $Z$.
We always consider a manifold equipped with a flat complex vector bundle $(F,\n^F)$.
We denote by $\Omega^\bullet(X,F)$ (resp. $\Omega^\bullet_\mathrm{c}(X,F)$)
the vector space of differential forms (resp. differential forms with compact support)
on $X$ with values in $F$.
We denote by $d^F$ the de Rham operator on $\Omega^\bullet(X,F)$ induced by $\n^F$.
We will also use the notation $d^F_X$
when we want to emphasize the underlying manifold.
We denote
\begin{align}
\label{eq-notation-H}
\begin{split}
& H^\bullet_\mathrm{abs}(X,F) = H^\bullet(X,F)
\simeq H^\bullet\big(\Omega^\bullet(X,F),d^F\big),\\
& H^\bullet_\mathrm{rel}(X,F) = H^\bullet(X,\partial X,F)
\simeq H^\bullet\big(\Omega^\bullet_\mathrm{c}(X,F),d^F\big).
\end{split}
\end{align}
We write $H^\bullet_\bd(X,F)$ for short if the choice of abs/rel is clear.
If $X$ is equipped with a Riemannian metric,
we denote by $\big\lVert\cdot\big\rVert_X$
the $L^2$-metric (see \eqref{eq-ch1-def-scalar-prod})
on $\Omega^\bullet_\mathrm{c}(X,F)$
or its $L^2$-closure $L^2\big(\Omega^\bullet_\mathrm{c}(X,F)\big)$.
For a vector subspace $V \subseteq L^2\big(\Omega^\bullet_\mathrm{c}(X,F)\big)$,
we denote by $L^2(V)$ the closure of $V$ with respect to $\big\lVert\cdot\big\rVert_X$.

For a linear map $A$,
we denote by $\mathrm{Ker}(A)$ (resp. $\mathrm{Im}(A)$)
the kernel (resp. image) of $A$.
For a self-adjoint operator $A$,
we denote by $\Sp(A)$ its spectrum.
For a Hermitian matrix $A$, we note
\begin{equation}
{\det}^*(A) = \prod_{\lambda\in\Sp(A)\backslash\{0\}} \lambda.
\end{equation}

\hfill\\
\noindent\textbf{Acknowledgments.}
The authors are grateful to Prof. Xiaonan Ma for having raised the question which is solved in this paper.
This paper was written during the work of Y. Z. on his PhD thesis.
Y. Z. would like to thank his advisor Prof. Jean-Michel Bismut for his support.
This paper was partially written during postdoctoral work of J. Z. at Chern Institute of Mathematics.
J. Z. would like to thank his postdoctoral advisor Prof. Weiping Zhang for his support.
The authors would like to thank the referee for his/her detailed comments and suggestions.

The research has received funding from
the European Research Council (E.R.C.) under European Union's Seventh Framework Program
(FP7/2007-2013)/ ERC grant agreement No. 291060.
J. Z. was also supported by the NNSFC (Grant No. 11601089, No. 11571183).

The results contained in this paper were announced in \cite{PZZ-CRAS}.

\section{Ray-Singer metrics}
\label{ch1}

In this section,
we recall several fundamental notions and constructions.
This section is organized as follows.
In \textsection \ref{ch1-1},
we recall the definition of Ray-Singer metric.
In \textsection \ref{ch1-2},
we introduce the Mayer-Vietoris exact sequence.

\subsection{Hodge theory and Ray-Singer metrics}
\label{ch1-1}

Let $X$ be a $n$-dimensional compact manifold with boundary $Y$.
Let $F$ be a flat complex vector bundle over $X$ with flat connection $\nabla^{F}$.

Let $g^{TX}$ be a Riemannian metric on $X$.
Let $h^{F}$ be a Hermitian metric on $F$.
Let $\big\langle\cdot,\cdot\big\rangle_{\Lambda^\bullet(T^{*}X)\otimes F}$ be the scalar product
on $\Lambda^\bullet(T^{*}X)\otimes F$ induced by $g^{TX}$ and $h^F$.
Let $o(TX)$ be the orientation line bundle of $X$.
Let $dv_{X}\in\Omega^n(X,o(TX))$ be the Riemannian volume form on $(X,g^{TX})$.
The $L^2$-metric on $\Omega^\bullet(X,F)$ is defined as follows:
for $s,s'\in \Omega^\bullet(X,F)$,
\begin{equation}
\label{eq-ch1-def-scalar-prod}
\big\langle s,s' \big\rangle_X =
\int_X \big\langle s(x),s'(x)\big\rangle_{\Lambda^\bullet(T^{*}X)\otimes F} dv_{X}.
\end{equation}
Let $d^{F,*}$ be the formal adjoint of $d^{F}$ with respect to \eqref{eq-ch1-def-scalar-prod}.
The Hodge-de Rham operator is defined as
\begin{equation}
\label{eq-ch1-def-D}
D^{F}_{X} = d^{F}+d^{F,*}.
\end{equation}
Then the associated Hodge-Laplacian is defined by
\begin{equation}
\label{eq-ch1-def-D2}
D^{F,2}_{X} = d^{F}d^{F,*}+d^{F,*}d^{F}: \;
\Omega^p(X,F)\rightarrow \Omega^p(X,F).
\end{equation}

We identify the normal bundle $\mathfrak{n}$ of $Y \subseteq X$
with the orthogonal complement of $TY\big|_Y \subseteq TX|_{Y}$.
We denote by $e_{\mathfrak{n}}$ the inward pointing unit normal vector field on $Y$.
Let $e^{\mathfrak{n}}$ be the dual vector field.
We denote by $i_\cdot$ (resp. $\wedge \cdot$)
the interior (resp. exterior) multiplication.

Set
\begin{align}
\label{eq-ch1-def-bd-D}
\begin{split}
& \Omega^{p}_\mathrm{abs}(X,F) =
\big\{\sigma\in \Omega^{p}(X,F) \;:\; \left (i_{e_\mathfrak{n}}\sigma\right)|_{Y}=0 \big\},\\
& \Omega^{p}_\mathrm{rel}(X,F) =
\big\{\sigma\in \Omega^{p}(X,F) \;:\; \left (e^\mathfrak{n}\wedge\sigma\right)|_{Y}=0 \big\}.
\end{split}
\end{align}
The restriction of $D^{F}_X$
to $\Omega^{p}_\mathrm{abs/rel}(X,F)$
is essentially self-adjoint.
Set
\begin{align}
\label{eq-ch1-def-bd-D2}
\begin{split}
& \Omega^p_{\mathrm{abs},D^2}(X,F) =
\big\{\sigma\in \Omega^p(X,F) \;:\;
\left (i_{e_\mathfrak{n}}\sigma\right)|_Y =
\left(i_{e_\mathfrak{n}}(d^{F}\sigma)\right)|_Y = 0 \big\},\\
& \Omega^p_{\mathrm{rel},D^2}(X,F) =
\big\{\sigma\in \Omega^p(X,F) \;:\;
\left (e^\mathfrak{n}\wedge\sigma\right)|_Y =
\left(e^\mathfrak{n}\wedge(d^{F,*}\sigma)\right)|_Y = 0 \big\}.
\end{split}
\end{align}
We denote by $D^{F,2}_{X,\mathrm{abs}}$ and $D^{F,2}_{X,\mathrm{rel}}$
the restrictions of $D^{F,2}_{X}$
to $\Omega^p_{\mathrm{abs},D^{2}}(X,F)$ and $\Omega^p_{\mathrm{rel},D^{2}}(X,F)$.
For the sake of convenience,
a unified notation '$\mathrm{bd}$' will be adopted
to represent '$\mathrm{abs}$' or '$\mathrm{rel}$',
when it is not necessary to distinguish the boundary conditions.
The operator $D^{F,2}_{X,\mathrm{bd}}$ is essentially self-adjoint.
Set
\begin{equation}
\label{eq-ch1-def-mathscrH}
\mathscr{H}^p_\mathrm{bd}(X,F) =
\big\{ \sigma\in\Omega^p_{\mathrm{bd},D^{2}}(X,F) \;:\; D^{F,2}_{X}\sigma = 0 \big\}.
\end{equation}

Let $\K_{X}$ be a smooth triangulation of $X$.
Let $C_{\bullet}(\K_{X},F^{*})$ (resp. $C^{\bullet}(\K_{X},F)$)
be the corresponding chain (resp. cochain) groups with coefficents in $F^*$
(cf. \cite[$\S$ 1.1]{BruMa12}).
We define the de Rham map
$P_{\infty}: \Omega^\bullet(X,F)\rightarrow C^{\bullet}(\K_{X},F)$ as follows
(cf. \cite[(1.15)]{BruMa12}),
\begin{equation}
\label{eq-ch1-dR-map}
P_{\infty}(\sigma)(\mathfrak{a}) =
\int_{\mathfrak{a}} \sigma ,  \quad
\text{for} \, \sigma\in \Omega^\bullet(X,F),\, \mathfrak{a}\in C_{\bullet}(\K_{X},F^{*}).
\end{equation}

The following theorem comes from \cite[Theorem 1.1]{BruMa12}.

\begin{thm}[Hodge decomposition theorem]
\label{thm-ch1-Hodge} \hfill
\begin{enumerate}
\item We have
\begin{equation}
\label{eq-thm-ch1-Hodge-1}
\mathscr{H}^p_\mathrm{bd}(X,F) =
\Ker(d^F) \cap \Ker(d^{F,*}) \cap \Omega^p_\mathrm{bd}(X,F).
\end{equation}
\item The vector space $\mathscr{H}^p_\mathrm{bd}(X,F)$ is finite dimensional.
\item We have the following orthogonal decomposition
\begin{align}
\label{eq-thm-ch1-Hodge-2}
\begin{split}
\Omega^p_\mathrm{bd}(X,F) & =
\mathscr{H}^p_\mathrm{bd}(X,F) \oplus
d^F \Omega^{p-1}_\mathrm{bd}(X,F) \oplus
d^{F,*} \Omega^{p+1}_\mathrm{bd}(X,F),\\
L^2\big(\Omega^p(X,F)\big) & =
\mathscr{H}^p_\mathrm{bd}(X,F) \oplus
L^2\big(d^F \Omega^{p-1}_\mathrm{bd}(X,F)\big) \oplus
L^2\big(d^{F,*} \Omega^{p+1}_\mathrm{bd}(X,F)\big).
\end{split}
\end{align}
\item The inclusion
$\mathscr{H}^p_\mathrm{bd}(X,F) \rightarrow \Ker(d^{F})\cap \Omega^{p}_\mathrm{bd}(X,F)$
composed with $P_\infty$ maps into
$C^{p}(\K_{X},F)$ for '{\rm bd}' = '{\rm abs}'
(resp. $C^{p}(\K_{X},\K_{Y},F)$ for '{\rm bd}' = '{\rm rel}').
Moreover, the induced map
\begin{equation}
\label{eq-thm-ch1-Hodge-3}
\mathscr{H}^{p}_\mathrm{bd}(X,F)\rightarrow H^{p}_{\rm bd}(X,F).
\end{equation}
is an isomorphism.
\end{enumerate}
\end{thm}

The de Rham cohomology is defined as
\begin{equation}
\label{eq-ch1-def-dR-cohomology}
H^p(\Omega^{\bullet}_\mathrm{bd}(X,F),d^F) =
\frac{\Ker(d^{F})\cap\Omega^p_\mathrm{bd}(X,F)}
{d^{F}(\Omega^{p-1}_\mathrm{bd}(X,F)) \cap \Omega^p_\mathrm{bd}(X,F)}.
\end{equation}
By Theorem \ref{thm-ch1-Hodge},
the map $P_{\infty}$ induces an isomorphism
\begin{equation}
\label{eq-ch1-iso-cohomolgy}
H^p(\Omega^{\bullet}_\mathrm{bd}(X,F),d^{F})
\simeq H^{p}_\mathrm{bd}(X,F).
\end{equation}

Let
$P_{\mathscr{H}}: \Omega^\bullet(X,F) \rightarrow \mathscr{H}_\mathrm{bd}^\bullet(X,F)$
be the orthogonal projection
(with respect to the $L^2$-metric \eqref{eq-ch1-def-scalar-prod}).
Set $P^{\bot}_{\mathscr{H}} = 1-P_{\mathscr{H}}$.
Let $N$ be the number operator on $\Omega^\bullet(X,F)$ (see \eqref{eq-ch0-def-zeta}).
Let $\exp\big(-tD^{F,2}_{X,\mathrm{bd}}\big)$
be the heat semi-group associated with $D^{F,2}_{X,\mathrm{bd}}$.

\begin{defn}
\label{def-ch1-zeta}
For $s\in \C$ with $\mathrm{Re}(s)>\frac{n}{2}$,
set
\begin{equation}
\label{eq-def-ch1-zeta}
\theta^{F}_{X,\mathrm{bd}}(s) = - \frac{1}{\Gamma(s)}
\int_0^{+\infty} t^s \tr\big[(-1)^NN\exp\big(-tD^{F,2}_{X,\mathrm{bd}}\big)P^{\bot}_{\mathscr{H}}\big] \frac{dt}{t}.
\end{equation}
The function $\theta^{F}_{X,\mathrm{bd}}(u)$ admits a meromorphic continuation to $\C$,
which is holomorphic at $s=0$ (cf. \cite[Theorem 3.2] {BruMa12}).
\end{defn}

Definition \ref{def-ch1-zeta} is
equivalent to the definition of zeta-type function given in the introduction.
It is Definition \ref{def-ch1-zeta}
that will be used in our proof (in \textsection \ref{ch4}).

\begin{defn}
\label{def-ch1-rs-torsion}
The Ray-Singer analytic torsion is defined as
\begin{equation}
\label{eq-def-ch1-rs-torsion}
T_\mathrm{bd}(X,g^{TX},h^{F}) =
\exp\Big(\frac{1}{2}{\theta^{F}_{X,\mathrm{bd}}}'(0)\Big).
\end{equation}
\end{defn}

Let $\det H^{\bullet}_\mathrm{bd}(X,F)$ be the determinant line
(see \eqref{eq-ch0-def-det}) of $H^\bullet_\mathrm{bd}(X,F)$.
By the isomorphism in \eqref{eq-thm-ch1-Hodge-3},
$H^{\bullet}_{\rm bd}(X,F)$ inherits a $L^{2}$-metric $h^{H^{\bullet}_\mathrm{bd}(X,F)}$
from the $L^2$-metric on $\Omega^\bullet(X,F)$.
Let $\big\lVert\,\cdot\,\big\rVert^{L^{2}}_{\det H^{\bullet}_\mathrm{bd}(X,F)}$
be the induced metric on $\det H^{\bullet}_\mathrm{bd}(X,F)$.

\begin{defn}
\label{def-ch1-rs-metric}
The Ray-Singer metric on $\det H^{\bullet}_\mathrm{bd}(X,F)$ is defined as
\begin{equation}
\label{eq-def-ch1-rs-metric}
\big\lVert\,\cdot\,\big\rVert^\mathrm{RS}_{\det H^{\bullet}_\mathrm{bd}(X,F)}
= \big\lVert\,\cdot\,\big\rVert^{L^{2}}_{\det H^{\bullet}_\mathrm{bd}(X,F)}T_\mathrm{bd}(X,g^{TX},h^{F}).
\end{equation}
\end{defn}

\subsection{Mayer-Vietoris exact sequence}
\label{ch1-2}

In this subsection,
we will use the notations in the introduction.
Recall that $Z$ is a compact smooth Riemannian manifold
and $Y\subseteq Z$ is a hypersurface cutting $Z$ into two pieces $Z_1$ and $Z_2$.
We have the following Mayer-Vietoris exact sequence (cf. \cite[(0.16)]{BruMa12})
\begin{equation}
\label{eq-ch1-mv}
\cdots \rightarrow
H^p_\mathrm{rel}(Z_1,F) \overset{\alpha_p} \rightarrow
H^p(Z,F) \overset{\beta_p} \rightarrow
H^p_\mathrm{abs}(Z_2,F) \overset{\delta_p} \rightarrow
H^{p+1}_\mathrm{rel}(Z_1,F) \rightarrow
\cdots.
\end{equation}
By \eqref{eq-ch1-iso-cohomolgy} and \eqref{eq-ch1-mv},
we get the following exact sequence
\begin{equation}
\label{eq-ch1-mv-dR}
\cdots \rightarrow
H^p(\Omega^\bullet_\mathrm{rel}(Z_1,F),d^{F}) \overset{\alpha_p} \rightarrow
H^p(\Omega^\bullet(Z,F),d^{F}) \overset{\beta_p} \rightarrow
H^p(\Omega^{\bullet}_\mathrm{abs}(Z_2,F),d^{F}) \overset{\delta_p} \rightarrow
\cdots.
\end{equation}

The following proposition is well-known.

\begin{prop}
\label{prop-ch1-mv}
The maps $\alpha_p$, $\beta_p$ and $\delta_p$ in \eqref{eq-ch1-mv-dR}
are given as follows:
\begin{enumerate}[{\rm a)}]
\item For $[\sigma]\in H^p(\Omega^\bullet_\mathrm{rel}(Z_1,F),d^F)$,
there exists $\sigma'\in [\sigma]$ vanishing near $Y$.
Let $\sigma''\in \Omega^p(Z,F)$ be the extension of $\sigma'$ by zero.
We have $\alpha_p([\sigma]) = [\sigma'']$.
\item For $[\sigma]\in H^p(\Omega^\bullet(Z,F),d^F)$,
there exists $\sigma'\in [\sigma]$ such that
$\sigma'' := \sigma'\big|_{Z_2}\in\Omega^p_\mathrm{abs}(Z_2,F)$.
We have $\beta_p([\sigma]) = [\sigma'']$.
\item For $[\sigma]\in H^p(\Omega^\bullet_\mathrm{abs}(Z_2,F),d^F)$,
there exists $\sigma'\in\Omega^p(Z,F)$ such that
$\sigma'\big|_{Z_2}\in[\sigma]$.
Then $\sigma'' := d^F \sigma'\big|_{Z_1}\in\Omega^{p+1}_\mathrm{rel}(Z_1,F)$.
We have $\delta_p([\sigma]) = [\sigma'']$.
\end{enumerate}
\end{prop}

By Theorem \ref{thm-ch1-Hodge} and \eqref{eq-ch1-mv},
we have the following exact sequence
\begin{equation}
\label{eq-ch1-mv-harmonic}
\cdots \rightarrow
\mathscr{H}^p_\mathrm{rel}(Z_1,F) \overset{\alpha_p} \rightarrow
\mathscr{H}^p(Z,F) \overset{\beta_p}\rightarrow
\mathscr{H}^p_\mathrm{abs}(Z_2,F) \overset{\delta_p}\rightarrow\cdots.
\end{equation}
By Theorem \ref{thm-ch1-Hodge}, Proposition \ref{prop-ch1-mv} and \eqref{eq-ch1-iso-cohomolgy},
we get the following proposition.

\begin{prop}
\label{prop-ch1-mv-harmonic}
For
$\omega\in \mathscr{H}^p_\mathrm{rel}(Z_1,F)$,
$\mu\in \mathscr{H}^p(Z,F)$,
$\nu\in \mathscr{H}^p_\mathrm{abs}(Z_{2},F)$ and
$\omega'\in \mathscr{H}^{p+1}_\mathrm{rel}(Z_1,F)$,
we have
\begin{equation}
\label{eq-prop-ch1-mv-harmonic}
\big\langle \alpha_p(\omega),\mu \big\rangle_{Z} =
\big\langle \omega,\mu \big\rangle_{Z_{1}},\quad
\big\langle \beta_p(\mu),\nu \big\rangle_{Z_{2}} =
\big\langle \mu,\nu \big\rangle_{Z_{2}},\quad
\big\langle \delta_p(\nu),\omega' \big\rangle_{Z_1} =
\big\langle \nu,i_{e_{\mathfrak{n}}}\omega' \big\rangle_Y.
\end{equation}
\end{prop}
\begin{proof}
We only prove the first identity.
The second and third identities can be proved in a similar way.

By the second identity in \eqref{eq-notation-H} and \eqref{eq-ch1-iso-cohomolgy},
there exists $\omega'\in \Omega^p_\mathrm{c}(Z_1,F)$
(i.e., $\omega'$ vanishes near $\partial Z_1$)
such that
\begin{equation}
\label{eq1-pf-prop-ch1-mv-harmonic}
d^F\omega' = 0, \hspace{5mm}
[\omega'] = [\omega] \in H^p(\Omega^{\bullet}_\mathrm{rel}(Z_1,F),d^{F}).
\end{equation}
We may view $\omega'$ as an element in $\Omega^p(Z,F)$,
i.e., $\omega'$ vanishes on $Z_2$.
By Proposition \ref{prop-ch1-mv}
and \eqref{eq1-pf-prop-ch1-mv-harmonic},
we have
\begin{equation}
\label{eq2-pf-prop-ch1-mv-harmonic}
[\alpha_p(\omega)] = [\omega'] \in H^p(Z,F).
\end{equation}
By Theorem \ref{thm-ch1-Hodge},
\eqref{eq1-pf-prop-ch1-mv-harmonic}
and \eqref{eq2-pf-prop-ch1-mv-harmonic},
we have
\begin{equation}
\big\langle \alpha_p(\omega),\mu \big\rangle_Z =
\big\langle \omega',\mu \big\rangle_Z =
\big\langle \omega',\mu \big\rangle_{Z_1} =
\big\langle \omega,\mu \big\rangle_{Z_1}.
\end{equation}
This completes the proof of Proposition \ref{prop-ch1-mv-harmonic}.
\end{proof}

\section{Manifolds with cylinder ends and Scattering matrix}
\label{ch2}

In this section,
we study the Hodge-Laplacians on manifolds with cylindrical ends.
This section is organized as follows.
In \textsection \ref{ch2-1}, we calculate
the eigensections of the Hodge-Laplacian on a cylinder
without boundary condition.
In \textsection \ref{ch2-2}, we study
the absolutely continuous spectrum of the Hodge-Laplacian in question.
In \textsection \ref{ch2-3}, we study
the extended $L^2$-solutions,
i.e., generalized eigensections associated with $0$.

\subsection{Hodge-de Rham operator on a cylinder}
\label{ch2-1}

Let $(Y,g^{TY})$ be a compact Riemannian manifold.
Let $F$ be a flat vector bundle over $Y$.
Let $h^F$ be a Hermitian metric on $F$.

Let $du$ be a unit odd Grassmannian variable such that $(du)^{2}=0$,
which we view as an endomorphism of degree $1$,
and fix its norm by $|du|=1$
and assume that it anticommutes with $\Lambda^\bullet(T^*Y)$.
Set $\Omega^\bullet(Y,F[du]) = \Omega^\bullet(Y,F) \otimes \C[du]$,
then as a $\Z_2$-graded vector space,
\begin{equation}
\Omega^{p}(Y,F[du])=\Omega^{p}(Y,F)\oplus \Omega^{p-1}(Y,F)du.
\end{equation}

Let $D^F_Y$ be the Hodge-de Rham operator
(cf. \eqref{eq-ch1-def-D})
on $\Omega^\bullet(Y,F[du])$.
Let $\mathscr{H}^\bullet(Y,F)$ (resp. $\mathscr{H}^{\bullet}(Y,F[du])$) be
the kernel of $D^{F,2}_Y$ on $\Omega^\bullet(Y,F)$ (resp. $\Omega^\bullet(Y,F[du])$).
For $\mu\in \R$,
Let $\mathscr{E}_\mu(Y,F)$ (resp. $\mathscr{E}_\mu(Y,F[du])$) be
the eigenspace of $D^F_Y$ on $\Omega^\bullet(Y,F)$ (resp. $\Omega^\bullet(Y,F[du])$)
associated with $\mu$.
Then we have
\begin{align}
\begin{split}
& \mathscr{H}^\bullet(Y,F[du]) = \mathscr{H}^\bullet(Y,F) \oplus \mathscr{H}^\bullet(Y,F)du, \\
& \mathscr{E}_\mu(Y,F[du]) = \mathscr{E}_\mu(Y,F) \oplus \mathscr{E}_{-\mu}(Y,F)du.
\end{split}
\end{align}

Let $I\subseteq\R$ be an interval.
For ease of notation,
we denote $Y_I = I\times Y$.
Let $(u,y)\in Y_I$ be the coordinates.
Let $\pi_Y: Y_I\rightarrow Y$ be the canonical projection.
We equip $Y_I$ with the product metric \eqref{eq-ch0-riemann-metric}.
The pull-back of $F$ (resp. $h^F$) by $\pi_{Y}^{*}$
is still denoted by $F$ (resp. $h^F$).
We equip $F$ over $Y_I$ with the flat connection
\begin{equation}
\nabla^F = du\wedge\frac{\partial}{\partial u} + \nabla^F\big|_{Y}.
\end{equation}

We will use the following identification
\begin{equation}
\label{eq-ch2-iden-duform}
\Omega^\bullet(Y_I,F) =
\mathscr{C}^\infty(I,\Omega^\bullet(Y,F[du])).
\end{equation}
For $u\in I$ and $\omega\in \Omega^\bullet(Y_{I},F)$,
we denote by $\omega_u\in \Omega^{\bullet}(Y,F[du])$
the value of $\omega$, viewed as a function on $I$, at $u\in I$.
By \eqref{eq-ch1-def-scalar-prod} and \eqref{eq-ch2-iden-duform},
for $\omega,\omega'\in \Omega^{\bullet}(Y_{I},F)$,
we have
\begin{equation}
\big\langle \omega,\omega' \big\rangle_{Y_I} =
\int_I \big\langle\omega_u,\omega'_u \big\rangle_Y du.
\end{equation}

Let $\frac{\partial}{\partial u}$ be the dual of $du$.
We define the action on $\C[du]$ by
\begin{equation}
\cliffu = du\wedge - i_{\frac{\partial}{\partial u}},\quad
\hatcliffu = du\wedge + i_{\frac{\partial}{\partial u}}.
\end{equation}

Let $D^F_{Y_I}$ be the Hodge-de Rham operator on $\Omega^\bullet(Y_{I},F)$.
We have
\begin{equation}
\label{eq-ch2-D-YI}
D^F_{Y_I} =
\cliffu \frac{\partial}{\partial u} +
D^F_Y, \hspace{5mm}
\cliffu D^F_Y = - D^F_Y \cliffu.
\end{equation}
For $I=[a,b]$, we have the Green formula:
for $\omega,\omega'\in\Omega^\bullet(Y_I,F)$,
\begin{equation}
\label{eq-ch2-green}
\big\langle D^F_{Y_I}\omega,\omega' \big\rangle_{Y_I} -
\big\langle \omega,D^F_{Y_I}\omega' \big\rangle_{Y_I} =
\big\langle \cliffu\omega_b,\omega'_b \big\rangle_Y -
\big\langle \cliffu\omega_a,\omega'_a \big\rangle_Y.
\end{equation}

Set
\begin{equation}
\label{eq-ch2-def-deltaY}
\delta_Y = \mathrm{min}\big\{\abs{\mu} \;:\;
0\neq\mu\in \mathrm{Sp}\big(D^{F}_{Y}\big)\big\}.
\end{equation}
For $\omega\in\Omega^\bullet(Y_I,F)$ an eigensection of $D^F_{Y_I}$
associated with eigenvalue $\lambda\in(-\delta_Y,\delta_Y)$,
by \eqref{eq-ch2-D-YI},
we have
\begin{align}
\label{eq-ch2-eigen-expansion}
\begin{split}
\omega & =
e^{-iu\lambda}\big(\phi^-_0-i\cliffu\phi^-_0\big) +
e^{iu\lambda} \big(\phi^+_0+i\cliffu\phi^+_0\big) \\
& \hspace{5mm} + \sum_{0\neq\mu\in\mathrm{Sp}(D^F_Y)}
e^{-\sqrt{\mu^2-\lambda^2}u} \big( \phi^-_\mu -
\frac{\mu-\lambda}{\sqrt{\mu^2-\lambda^2}}\cliffu\phi^-_\mu \big) \\
& \hspace{5mm} + \sum_{0\neq\mu\in\mathrm{Sp}(D^F_Y)}
e^{\sqrt{\mu^2-\lambda^2}u} \big( \phi^+_\mu +
\frac{\mu-\lambda}{\sqrt{\mu^2-\lambda^2}}\cliffu\phi^+_\mu \big),
\end{split}
\end{align}
where $\phi^\pm_0\in\mathscr{H}^\bullet(Y,F)$,
$\phi^\pm_\mu\in\mathscr{E}_\mu(Y,F[du])$
(cf. \cite[\textsection 4]{Muller94}).
Note that
\begin{equation}
\label{eq-cliff-ann}
\big(1-i\cliffu\big)\big(1+i\cliffu\big) = 0,
\end{equation}
which explains that
$\phi^\pm_0\in\mathscr{H}^\bullet(Y,F)$
(instead of $\phi^\pm_0\in\mathscr{H}^\bullet(Y,F[du])$).
Set
\begin{align}
\label{eq-ch2-def-zm-nz-1}
\begin{split}
& \omega^{\mathrm{zm},\pm} =
e^{\pm iu\lambda}\big(\phi^\pm_0 \pm i\cliffu\phi^\pm_0\big),\\
& \omega^{\mu,\pm} =
e^{\pm\sqrt{\mu^2-\lambda^2}u}\big(\phi^\pm_\mu \pm
\frac{\mu-\lambda}{\sqrt{\mu^2-\lambda^2}}\cliffu\phi^\pm_\mu\big),\\
& \omega^\pm = \sum_{0\neq\mu\in\mathrm{Sp}(D^F_Y)}\omega^{\mu,\pm},
\quad \omega^\mu = \omega^{\mu,-} + \omega^{\mu,+},\\
& \omega^\mathrm{zm} = \omega^{\mathrm{zm},-} + \omega^{\mathrm{zm},+},
\quad \omega^\mathrm{nz} = \omega^- + \omega^+.
\end{split}
\end{align}
We remark that
\begin{equation}
\cliffu \omega^{\mathrm{zm},\pm} = \mp i \, \omega^{\mathrm{zm},\pm}.
\end{equation}
We call $\omega^\mathrm{zm}$ the zeromode of $\omega$.
By \eqref{eq-ch2-eigen-expansion}-\eqref{eq-ch2-def-zm-nz-1},
we have
\begin{equation}
\label{eq-ch2-decomp-zm-nz}
\omega
= \omega^\mathrm{zm} + \omega^\mathrm{nz}
= \omega^\mathrm{zm} + \omega^- + \omega^+.
\end{equation}
Moreover,
for $u\in I$ and $\mu,\nu\in\R$ with $|\mu|\neq|\nu|$,
we have
\begin{equation}
\label{eq-ch2-orth-zm-nz}
\big\langle \omega^\mathrm{zm}_u,\omega^\mu_u \big\rangle_Y = 0, \quad
\big\langle \omega^\mu_u,\omega^\nu_u \big\rangle_Y = 0.
\end{equation}
Furthermore,
for $a<u<v<b$,
we have
\begin{equation}
\label{eq-ch2-nz-decreasing}
\big\lVert \omega^-_v \big\rVert_Y \leqslant
e^{-(v-u)\sqrt{\delta^2_Y-\lambda^2}} \big\lVert \omega^-_u \big\rVert_Y,\quad
\big\lVert \omega^+_u \big\rVert_Y \leqslant
e^{-(v-u)\sqrt{\delta^2_Y-\lambda^2}} \big\lVert \omega^+_v \big\rVert_Y.
\end{equation}
By \eqref{eq-ch2-def-zm-nz-1},
$\big\lVert \omega^\mathrm{zm}_u \big\rVert_Y$ is independent of $u\in I$.
We denote
\begin{equation}
\label{eq-ch2-zm-norm}
\big\lVert \omega^\mathrm{zm} \big\rVert_Y =
\big\lVert \omega^\mathrm{zm}_u \big\rVert_Y.
\end{equation}

\begin{lemma}
\label{lem-ch2-zm-nz}
We assume that $I=[a,b]$.
For $\omega_1,\,\omega_2\in \Omega^\bullet(Y_I,F)$ eigensections of $D^F_{Y_I}$
associated with eigenvalues in $[-\delta_Y/2,\delta_Y/2]$,
we have
\begin{align}
\label{eq-lem-ch2-zm-nz}
\begin{split}
\Big| \big\langle \omega^\mathrm{nz}_1,\omega^\mathrm{nz}_2 \big\rangle_{Y_I} \Big|
& \leqslant
2 \delta_Y^{-1}
\Big( 1 - e^{-(b-a)\delta_Y/2} \Big)^{-2}
\big\lVert \omega_1 \big\rVert_{\partial Y_I}
\big\lVert \omega_2 \big\rVert_{\partial Y_I},\\
\Big| \big\langle \omega^\mathrm{zm}_1,\omega^\mathrm{zm}_2 \big\rangle_Y \Big|
& \leqslant
\frac{1}{2}
\big\lVert \omega_1 \big\rVert_{\partial Y_I}
\big\lVert \omega_2 \big\rVert_{\partial Y_I}.
\end{split}
\end{align}
\end{lemma}
\begin{proof}
They follow from the Cauchy-Schwarz inequality
and \eqref{eq-ch2-eigen-expansion}-\eqref{eq-ch2-zm-norm}.
\end{proof}

\subsection{Hodge-de Rham operator on manifolds with cylindrical ends}
\label{ch2-2}

Let $(X_\infty,g^{TX_\infty})$ be a non-compact Riemannian manifold
with cylindrical end $Y_{\R_+}$, i.e.,
$X_\infty$ contains a subset isometric to $Y_{\R_+}$ whose complement is compact.
Let $(F,\nabla^{F})$ be a flat vector bundle over $X_\infty$.
Let $h^F$ be a Hermitian metric on $X_\infty$
such that \eqref{eq-ch0-herm-metric} holds on $Y_{\R_+}$.

By \cite[Theorem 3.2]{Muller94},
the Hodge-de Rham operator $D^F_{X_\infty}$ on $\Omega^\bullet_\mathrm{c}(X_\infty,F)$
is essentially self-adjoint.
Its self-adjoint extension  is still denoted by $D^F_{X_\infty}$.
We have the following decomposition
\begin{equation}
\label{eq-ch2-pp-ac}
L^2\big(\Omega^\bullet_\mathrm{c}(X_\infty,F)\big) =
\mathscr{E}^\bullet_\mathrm{pp}(X_\infty,F) \oplus
\mathscr{E}^\bullet_\mathrm{sc}(X_\infty,F) \oplus
\mathscr{E}^\bullet_\mathrm{ac}(X_\infty,F) \,,
\end{equation}
where the vector subspaces on the right hand side correspond to
purely point spectrum, singularly continuous spectrum and absolutely continuous spectrum
of $D^F_{X_\infty}$ (cf. \cite[Chapter 7.2]{ReedSimon1}).
We denote by
$D^F_{X_{\infty},\mathrm{pp}}$,
$D^F_{X_{\infty},\mathrm{sc}}$ and
$D^F_{X_{\infty},\mathrm{ac}}$
the restrictions of $D^F_{X_{\infty}}$ to the corresponding vector subspaces.

Now we give a formal definition of generalized eigensection.
We refer to \cite[Chapter 5]{Bere68} for details.
For $\Delta\subseteq \R$ a Borel subset,
let $I_\Delta:\R\rightarrow\R$ be the function defined by
\begin{equation}
I_\Delta(\lambda) = \left\{
\begin{array}{rl}
1 & \text{for } \lambda \in \Delta, \\
0 & \text{for } \lambda \notin \Delta.
\end{array} \right.
\end{equation}
We denote by $A^\bullet(X_\infty,F)$
the vector space of currents on $X_\infty$ with values in $F$.
The operator
\begin{equation}
I_\Delta \big(D^F_{X_\infty}\big):
\Omega^\bullet_\mathrm{c}(X_\infty,F) \rightarrow
L^2\big(\Omega^\bullet_\mathrm{c}(X_\infty,F)\big) \subseteq A^\bullet(X_\infty,F)
\end{equation}
is well-defined.
There exists an operator valued distribution
\begin{equation}
\Psi: \R \rightarrow \mathrm{Hom}\big(\Omega^\bullet_\mathrm{c}(X_\infty,F),A^\bullet(X_\infty,F)\big)
\end{equation}
such that
\begin{equation}
I_\Delta \big(D^F_{X_\infty}\big)
= \int_\Delta \Psi(\lambda) d\lambda.
\end{equation}
We have the decomposition
\begin{equation}
\Psi = \Psi_\mathrm{pp} + \Psi_\mathrm{sc} + \Psi_\mathrm{ac},
\end{equation}
where $\Psi_\mathrm{pp}$ (resp. $\Psi_\mathrm{sc}$, $\Psi_\mathrm{ac}$)
is a purely point (resp. singularly continuous, absolutely continuous) measure.
We define
\begin{equation}
\mathscr{E}_\lambda =
\Psi_\mathrm{ac}(\lambda)\big(\Omega^\bullet_\mathrm{c}(X_\infty,F)\big)
\subseteq A^\bullet(X_\infty,F).
\end{equation}
An element in $\mathscr{E}_\lambda$ is called generalized eigensection.
It turns out that $\mathscr{E}_\lambda \subseteq \Omega^\bullet(X_{\infty},F)$.
Moreover, the following properties hold,
\begin{itemize}
\item[-] For $\omega_\lambda\in\mathscr{E}_\lambda$,
$D^F_{X_\infty}\omega_\lambda = \lambda \omega_\lambda$.
\item[-] We have  $\mathscr{E}_\lambda \cap L^2\big(\Omega^\bullet_\mathrm{c}(X_\infty,F)\big) = 0$.
In particular, a generalized eigensection is determined by its restriction to $Y_{\R_+}$.
\item[-] For $\omega\in \mathscr{E}^\bullet_\mathrm{ac}(X_{\infty},F)$,
there exists a unique family $\big(\omega_\lambda\in\mathscr{E}_\lambda\big)_{\lambda\in\R}$
such that $\omega = \int_\R \omega_\lambda d\lambda$.
\end{itemize}
In this paper, the word 'generalized eigensection'
is uniquely assigned to the absolutely continuous spectrum.
In other words,
\textbf{a non zero eigensection will not be considered as a generalized eigensection}.

Let
\begin{equation}
\label{eq-def-Pi}
\Pi : \Omega^\bullet(Y,F[du]) \rightarrow \mathscr{H}(Y,F) du \oplus \bigoplus_{\mu>0}
\big((1-du)\mathscr{E}_\mu(Y,F) \oplus (1+du)\mathscr{E}_{-\mu}(Y,F) \big)
\end{equation}
be the orthogonal projection.
Let $D^F_{Y_{\R_+}}$ be the Hodge-de Rham operator with
\begin{equation}
\label{eq-def-dom-YI}
\mathrm{Dom}\big(D^F_{Y_{\R_+}}\big) =
\big\{ \omega \in \Omega^\bullet(Y_{\R_+}, F) \;:\; \omega_0 \in \Ker(\Pi) \big\},
\end{equation}
where $\omega_0$ is defined in the paragraph containing \eqref{eq-ch2-iden-duform}.
Such boundary condition was considered by Atiyah, Patodi and Singer \cite[(2.3)]{APS}.
By the proof of \cite[Proposition 2.5]{APS},
$D^F_{Y_{\R_+}}$ only possesses absolutely continuous spectrum.

Let $J : L^2\big(\Omega^\bullet(Y_{\R_+}, F)\big) \hookrightarrow L^2\big(\Omega^\bullet(X_\infty,F)\big)$
be the push-forward map induced by the embedding $Y_{\R_+} \hookrightarrow X_{\infty}$.
Following \cite[Prop. 4.9]{Muller94},
we define the wave operators
\begin{equation}
W_\pm \big(D^F_{X_\infty}, D^F_{Y_{\R_+}}\big) =
\lim_{t\rightarrow \pm\infty} e^{itD^F_{X_\infty}} J e^{-itD^F_{Y_{\R_+}}} : \;
L^2\big(\Omega^\bullet(Y_{\R_+},F)\big) \rightarrow
L^2\big(\Omega^\bullet(X_\infty,F)\big).
\end{equation}

The following theorem comes from \cite[Theorems 4.1, 4.2]{Muller94}.

\begin{thm}
\label{thm-ch2-muller}
The operator $D^F_{X_{\infty}}$ has no singularly continuous spectrum.
For $t>0$,
the heat operator $\exp\big(-tD^{F,2}_{X_\infty,\mathrm{pp}}\big)$ is of trace class.
The operator $W_\pm\big(D^F_{X_\infty}, D^F_{Y_{\R_+}}\big)$ is unitary.
Its image is $\mathscr{E}^\bullet_\mathrm{ac}(X_\infty,F)$.
Moreover, the following diagram commutes,
\begin{equation}
\xymatrix{
L^2\big(\Omega^\bullet(Y_{\R_+},F)\big)
\ar[d]_{W_\pm \big(D^F_{X_\infty}, D^F_{Y_{\R_+}}\big)}
\ar[r]^{D^F_{Y_{\R_+}}} &
L^2\big(\Omega^\bullet(Y_{\R_+},F)\big)
\ar[d]^{W_\pm \big(D^F_{X_\infty}, D^F_{Y_{\R_+}}\big)} \\
\mathscr{E}^\bullet_\mathrm{ac}(X_\infty,F)
\ar[r]^{D^F_{X_\infty,\mathrm{ac}}} &
\mathscr{E}^\bullet_\mathrm{ac}(X_\infty,F) \;.
}
\end{equation}
\end{thm}

Set
\begin{equation}
C\big(D^F_{X_\infty}, D^F_{Y_{\R_+}}\big) =
W^*_+\big(D^F_{X_\infty}, D^F_{Y_{\R_+}}\big)\circ
W_-\big(D^F_{X_\infty}, D^F_{Y_{\R_+}}\big)
\in \mathrm{End}\Big(L^2\big(\Omega^\bullet(Y_{\R_+},F)\big)\Big).
\end{equation}
Then $C\big(D^F_{X_\infty}, D^F_{Y_{\R_+}}\big)$ commutes with $D^F_{Y_{\R_+}}$.

By \eqref{eq-ch2-eigen-expansion},
\eqref{eq-cliff-ann}
and \eqref{eq-def-dom-YI},
a generalized eigensection of $D^F_{Y_{\R_+}}$ associated with $\lambda\in(-\delta_Y,\delta_Y)$
takes the following form,
\begin{equation}
E_0(\phi,\lambda) =
e^{-i\lambda u} \big(\phi - i\cliffu\phi\big) +
e^{i\lambda u}  \big(\phi + i\cliffu\phi\big),
\quad \text{where}\;  \phi\in\hh(Y,F).
\end{equation}
Since $C\big(D^F_{X_\infty}, D^F_{Y_{\R_+}}\big)$ commutes with $D^F_{Y_{\R_+}}$,
there exists $C(\lambda)\in\End(\hh(Y,F))$ such that
we have the formal identity
\begin{equation}
\label{eq-ch2-def-C-lambda-1}
C\big(D^F_{X_\infty}, D^F_{Y_{\R_+}}\big)
E_0(\phi,\lambda) =
E_0(C(\lambda)\phi,\lambda).
\end{equation}
More precisely,
for
\begin{equation}
\omega = \int_{-\delta_Y}^{\delta_Y} E_0(\phi_\lambda,\lambda)d\lambda
\in L^2\big(\Omega^\bullet(Y_{\R_+},F)\big),
\end{equation}
we have
\begin{equation}
C\big(D^F_{X_\infty}, D^F_{Y_{\R_+}}\big) \omega =
\int_{-\delta_Y}^{\delta_Y} E_0(C(\lambda)\phi_\lambda,\lambda)d\lambda.
\end{equation}
We call $C(\lambda)$ scattering matrix.
We extend the action of $C(\lambda)$ to $\hh(Y,F[du])$ such that
\begin{equation}
\label{eq-ch2-def-C-lambda-2}
C(\lambda) \cliffu = -\cliffu C(\lambda) .
\end{equation}

The following propositions come from \cite[\textsection 4, paragraphs between Lemma 4.16 and Proposition 4.26]{Muller94}.

\begin{prop}
\label{prop-ch2-C-lambda}
The following properties hold,
\begin{itemize}
\item[-] $C(\lambda)$ depends analytically on $\lambda$;
\item[-] $C(\lambda)\in\End(\hh(Y,F[du]))$ is unitary;
\item[-] $C(\lambda)$ preserves $\mathscr{H}^p(Y,F)$ and $\mathscr{H}^p(Y,F)du$
for $p=0,\cdots,n-1$ ;
\item[-] $C(\lambda)C(-\lambda) = \mathrm{Id}$,
in particular, $C(0)^2 = \mathrm{Id}$ .
\end{itemize}
\end{prop}

\begin{prop}
\label{prop-ch2-g-eigen-Dinf}
A generalized eigensection of $D^F_{X_\infty,\mathrm{ac}}$
associated with $\lambda\in(-\delta_Y,\delta_Y)$
takes the following form on $Y_{\R_+}$,
\begin{equation}
\label{eq-prop-ch2-g-eigen-Dinf-1}
e^{-i\lambda u} \big(\phi-i\cliffu\phi\big) +
e^{i\lambda u} C(\lambda) \big(\phi-i\cliffu\phi\big) +
\theta(\phi,\lambda),
\end{equation}
where $\phi\in\hh(Y,F)$ and
$\theta(\phi,\lambda)\in L^2\big(\Omega^\bullet(Y_{\R_+},F)\big)$.
Furthermore, for $u\in\R_+$,
\begin{equation}
\label{eq-prop-ch2-g-eigen-Dinf-2}
{\theta(\phi,\lambda)}_u \perp \hh(Y,F[du]) .
\end{equation}

Conversely, for $\phi\in\hh(Y,F)$ and $\lambda\in\,(-\delta_Y,\delta_Y)$,
there exists a unique generalized eigensection of $D^F_{X_{\infty},\mathrm{ac}}$
satisfying \eqref{eq-prop-ch2-g-eigen-Dinf-1},
which we denote by $E(\phi,\lambda)$.
\end{prop}

In Proposition \ref{prop-ch2-g-eigen-Dinf},
$\phi\in\hh(Y,F)$ is due to the same argument
as in \eqref{eq-ch2-eigen-expansion}.

We remark that $E(\phi,\lambda)$ depends linearly on $\phi$ and analytically on $\lambda$  (\cite[\textsection 4]{Muller94}).
Since $\hh(Y,F)$ is finite dimensional,
there exists $a>0$ such that,
for any $\phi\in\hh(Y,F)$ and $\lambda\in[-\delta_Y/2,\delta_Y/2]$,
\begin{equation}
\label{eq-rem-ch2-g-eigen}
\big\lVert E(\phi,\lambda) \big\rVert_{X_\infty\backslash Y_{\R_+}}
\leqslant a \big\lVert \phi \big\rVert_Y, \quad
\big\lVert E(\phi,\lambda) \big\rVert_{\partial Y_{\R_+}}
\leqslant a \big\lVert \phi \big\rVert_Y.
\end{equation}

\subsection{Extended $L^2$-solutions}
\label{ch2-3}

Set
\begin{align}
\label{eq-ch2-def-hh}
\begin{split}
\hhl(X_\infty,F) & =
L^2\big(\Omega^\bullet(X_\infty,F)\big) \cap \Ker \big( D^{F,2}_{X_\infty} \big),\\
\hh(X_\infty,F)  & =
\big\{(\omega,\hat{\omega})\in \Ker\big(D^{F,2}_{X_\infty}\big) \oplus \hh(Y,F[d u]) \;:\;
\omega^+ = 0, \; \omega^\mathrm{zm} = \pi^*_Y\hat{\omega} \big\},
\end{split}
\end{align}
where $\omega^+$, $\omega^\mathrm{zm}$ are defined in \eqref{eq-ch2-def-zm-nz-1}.
The elements of $\hhl(X_{\infty},F)$ (resp. $\hh(X_{\infty},F)$)
are called $L^2$-solutions (resp. extended $L^2$-solutions) of $D^{F,2}_{X_\infty}$
and $\hhl(X_{\infty},F)$ is naturally identified
as a vector subspace of $\hh(X_{\infty},F)$
by sending $\omega\in\hhl(X_{\infty},F)$ to $(\omega,0)\in\hh(X_{\infty},F)$.
By Proposition \ref{prop-ch2-g-eigen-Dinf},
$\hh(X_{\infty},F)$ is spanned by $\hhl(X_{\infty},F)$
and generalized eigensections of $D^F_{X_\infty}$ associated with $0$, i.e.,
\begin{equation}
\label{eq-ch2-decomp-hh}
\hh(X_\infty,F) = \hhl(X_\infty,F) \oplus \big\{ E(\phi,0) \;:\; \phi\in\hh(Y,F) \big\}.
\end{equation}

\begin{prop}
\label{prop-ch2-closeness}
For $(\omega,\hat{\omega})\in \hh(X_\infty,F)$,
we have
\begin{equation}
\label{eq-prop-ch2-closeness}
d^F\omega = d^{F,*}\omega = 0.
\end{equation}
\end{prop}
\begin{proof}
By \eqref{eq-ch2-eigen-expansion} and \eqref{eq-ch2-def-hh},
$d^F\omega$ and $d^{F,*}\omega$ are $L^2$-integrable and mutually orthogonal.
Then $d^F\omega + d^{F,*}\omega = D^F\omega = 0$ implies \eqref{eq-prop-ch2-closeness}.
The proof of Proposition \ref{prop-ch2-closeness} is completed. 
\end{proof}

By Proposition \ref{prop-ch2-closeness} and \eqref{eq-ch2-eigen-expansion},
for $(\omega,\hat{\omega})\in \hh(X_\infty,F)$,
we have
\begin{equation}
\label{eq-ch2-sol-expansion-1}
\omega\big|_{Y_{\R_+}} =
\pi^*_Y \hat{\omega} +
\sum_{\mu\neq 0,\; \mu\in\Sp(D^F_Y)} e^{-|\mu| u} \big( \tau_{\mu,1} - du \wedge \tau_{\mu,2} \big),
\end{equation}
with $\tau_{\mu,1}\in \Omega^\bullet(Y,F)$ and $\tau_{\mu,2}\in \Omega^{\bullet-1}(Y,F)$ satisfying
\begin{equation}
\label{eq-ch2-sol-expansion-2}
d^F_Y\tau_{\mu,1} = d^{F,*}_Y\tau_{\mu,2} = 0, \hspace{5mm}
d^{F,*}_Y\tau_{\mu,1} = |\mu|\tau_{\mu,2}, \hspace{5mm}
d^F_Y\tau_{\mu,2} = |\mu|\tau_{\mu,1}.
\end{equation}

We construct linear maps
\begin{equation}
\label{eq-ch2-def-R-1}
\mathscr{R}_{d^F} : \;
\hh(X_{\infty},F) \rightarrow \Omega^{\bullet-1}(Y_{\R_+},F),\quad
\mathscr{R}_{d^{F,*}} : \;
\hh(X_{\infty},F) \rightarrow \Omega^{\bullet+1}(Y_{\R_+},F)
\end{equation}
as follows:
for $(\omega,\hat{\omega})\in \hh(X_\infty,F)$ satisfying \eqref{eq-ch2-sol-expansion-1},
set
\begin{equation}
\label{eq-ch2-def-R-2}
\mathscr{R}_{d^F}(\omega,\hat{\omega}) =
\sum_{\mu\neq 0} \frac{1}{|\mu|}e^{-|\mu| u}\tau_{\mu,2} ,\quad
\mathscr{R}_{d^{F,*}}(\omega,\hat{\omega}) =
\sum_{\mu\neq 0} \frac{1}{|\mu|}e^{-|\mu| u}du\wedge\tau_{\mu,1}.
\end{equation}

\begin{prop}
\label{prop-ch2-R}
The following identities hold on $Y_{\R_+}$,
\begin{align}
\begin{split}
& d^F\mathscr{R}_{d^F}(\omega,\hat{\omega}) =
\omega\big|_{Y_{\R_+}} - \pi_Y^*\hat{\omega}, \quad
d^{F,*}\mathscr{R}_{d^F}(\omega,\hat{\omega}) = 0, \\
& d^{F,*}\mathscr{R}_{d^{F,*}}(\omega,\hat{\omega}) =
\omega\big|_{Y_{\R_+}} - \pi_Y^*\hat{\omega }, \quad
d^F\mathscr{R}_{d^{F,*}}(\omega,\hat{\omega}) = 0.
\end{split}
\end{align}
\end{prop}
\begin{proof}
These identities follow from
\eqref{eq-ch2-sol-expansion-1},
\eqref{eq-ch2-sol-expansion-2}
and \eqref{eq-ch2-def-R-2}.
The proof of Proposition \ref{prop-ch2-R} is completed.
\end{proof}

Set
\begin{equation}
\label{eq-ch2-def-LL}
\LL^\bullet = \big\{ \hat{\omega}\in\hh(Y,F[du])  \;:\;
\text{ there exists } \omega \text{ such that } (\omega,\hat{\omega})\in\hh(X_\infty,F) \big\}.
\end{equation}
The elements of $\LL^\bullet$ are called limiting values of $\hh(X_\infty,F)$.
Let $P_{\LL} : \hh(Y,F[du]) \rightarrow \LL^\bullet$ be the orthogonal projection.
Recall that $C(\lambda)$ is defined by
\eqref{eq-ch2-def-C-lambda-1} and \eqref{eq-ch2-def-C-lambda-2}.
We denote by $C=C(0)$.
By Proposition \ref{prop-ch2-C-lambda} and
\eqref{eq-prop-ch2-g-eigen-Dinf-1}, \eqref{eq-prop-ch2-g-eigen-Dinf-2} for $\lambda=0$,
we have
\begin{equation}
\label{eq-ch2-LL-C}
\LL^\bullet = \mathrm{Im}(C+\Id) = \Ker(C - \Id), \quad
C = 2P_{\LL} - \Id.
\end{equation}
By Proposition \ref{prop-ch2-C-lambda}
and \eqref{eq-ch2-LL-C},
we have the decomposition of vector spaces
\begin{equation}
\label{eq-ch2-def-LL-bd}
\LL^p = \LL^p_\mathrm{abs} \oplus \LL^p_\mathrm{rel}
\end{equation}
with $\LL^p_\mathrm{abs}\subseteq\mathscr{H}^p(Y,F)$ and
$\LL^p_\mathrm{rel}\subseteq\mathscr{H}^{p-1}(Y,F)du$.
From \eqref{eq-ch2-def-C-lambda-2} and \eqref{eq-ch2-LL-C},
\begin{equation}
\label{eq-ch2-def-LL-bd-2}
\LL^{p,\perp}_\mathrm{abs} = \cntrtu \LL^{p+1}_\mathrm{rel}
\subseteq \mathscr{H}^p(Y,F)
\end{equation}
is the orthogonal complement of $\LL^p_\mathrm{abs}$ in $\mathscr{H}^p(Y,F)$.

By \eqref{eq-ch2-def-hh} and \eqref{eq-ch2-def-LL},
we have the following short exact sequence,
\begin{equation}
\label{eq-ch2-ses-LL}
0 \longrightarrow
\hhl(X_\infty,F) \longrightarrow
\hh(X_\infty,F) \longrightarrow
\LL^\bullet \longrightarrow 0.
\end{equation}
Set
\begin{equation}
\label{eq-ch2-def-hh-bd}
\hhar(X_{\infty},F) = \big\{(\omega,\hat{\omega})\in\hh(X_{\infty},F):\;\hat{\omega}\in \LL^\bullet_\mathrm{abs/rel} \big\}.
\end{equation}
By \eqref{eq-ch2-ses-LL} and \eqref{eq-ch2-def-hh-bd},
we get the following short exact sequence,
\begin{equation}
\label{eq-ch2-ses-hh-bd}
0 \longrightarrow
\hhl(X_\infty,F) \longrightarrow
\hhar(X_\infty,F) \longrightarrow
\LL^\bullet_\mathrm{abs/rel} \longrightarrow 0.
\end{equation}

\section{Asymptotics of the spectrum of Hodge-Laplacians}
\label{ch3}

In this section,
we study the asymptotics of the spectrum of the Hodge-Laplacian.
In \textsection \ref{ch3-1},
we introduce the gluing of two manifolds,
which will be the central object in the whole paper.
In \textsection \ref{ch3-2},
we introduce a model,
which could be viewed as the limit of the Hodge-Laplacian in question.
In \textsection \ref{ch3-3} and \textsection \ref{ch3-4},
we study the asymptotics of the spectrum of the Hodge-Laplacian in question.
In \textsection \ref{ch3-5},
we extend the results in \textsection \ref{ch3-3} and \textsection \ref{ch3-4}
to manifolds with boundaries.

\subsection{Gluing two manifolds}
\label{ch3-1}

For $R>2$,
set
\begin{align}
\begin{split}
& Z_{1,R} = Z_1 \cup_Y Y_{[0,R]}, \quad
Z_{2,R} = Z_2 \cup_Y Y_{[-R,0]} ,\\
& Z_{1,\infty} = Z_1 \cup_Y Y_{[0,\infty)}, \quad
Z_{2,\infty} = Z_2 \cup_Y Y_{(-\infty,0]},
\end{split}
\end{align}
where we identify
$\partial Z_j\simeq Y$ ($j=1,2$) with $Y_0:=Y_{\{0\}}$.
Set (cf. Figure \ref{lab-fig-intro})
\begin{equation}
Z_R = Z_{1,R} \cup_Y Z_{2,R},
\end{equation}
We have the canonical embeddings
\begin{equation}
\label{eq-ch3-embedding}
Z_{1,2R} \subseteq Z_R, \quad
Z_{2,2R} \subseteq Z_R.
\end{equation}
We will use the following coordinates on their cylindrical parts:
$(u_1,y)\in Y_{[0,R]}\subseteq Z_{1,2R}$,
$(u_2,y)\in Y_{[-R,0]}\subseteq Z_{2,2R}$ and
$(u,y)\in Y_{[-R,R]}\subseteq Z_R$.
Under the embedding \eqref{eq-ch3-embedding},
we have the following coordinate transformations
\begin{equation}
\label{eq-ch3-coordinates}
(u,y) \leftrightarrow (u_1-R,y) \leftrightarrow (u_2+R,y).
\end{equation}
We will also use the following canonical embedding:
for $R'\leqslant R$,
\begin{equation}
Z_{j,R'}\subseteq Z_{j,R}, \quad \text{for } j=1,2,
\end{equation}
which is compatible with the coordinates $(u_j,y)$.

\subsection{Model of eigenspaces associated with small eigenvalues}
\label{ch3-2}

For $j=1,2$,
let $\hhl(Z_{j,\infty},F)$ (resp. $\hh(Z_{j,\infty},F)$)
be the vector space of $L^2$-solutions (resp. extended $L^2$-solutions)
of $D^{F,2}_{Z_{j,\infty}}$ (see \eqref{eq-ch2-def-hh}).
Set
\begin{equation}
\label{eq-ch3-def-hh-12}
\hh(Z_{12,\infty},F) =
\big\{(\omega_1,\omega_2,\hat{\omega}) \;:\;
(\omega_j,\hat{\omega})\in\hh(Z_{j,\infty},F) \quad \text{for} \, j=1,2 \big\}.
\end{equation}
For $R>2$,
we equip $\hh(Z_{12,\infty},F)$ with the following metric,
\begin{equation}
\label{eq-ch3-def-metric-hh}
\big\lVert (\omega_1,\omega_2,\hat{\omega}) \big\rVert^2_{\hh(Z_{12,\infty},F),R} =
\big\lVert \omega_1 \big\rVert^2_{Z_{1,R}} +
\big\lVert \omega_2 \big\rVert^2_{Z_{2,R}}.
\end{equation}
We will omit the subscript $R$ if $R=0$.
By Lemma \ref{lem-ch2-zm-nz} and \eqref{eq-ch2-def-hh},
there exists $a>0$ such that
\begin{equation}
\label{eq-ch3-Hnorm-increasing}
\big\lVert \,\cdot\, \big\rVert^2_{\hh(Z_{12,\infty},F),R} \leqslant
\big( 1+aR \big) \big\lVert \,\cdot\, \big\rVert^2_{\hh(Z_{12,\infty},F)}.
\end{equation}

By \eqref{eq-ch3-def-hh-12},
we have the following injection,
\begin{align}
\label{eq-ch3-inj-hh-12}
\begin{split}
\hhl(Z_{1,\infty},F) \oplus \hhl(Z_{2,\infty},F) & \rightarrow \hh(Z_{12,\infty},F) \\
(\omega_1,\omega_2) & \mapsto (\omega_1,\omega_2,0).
\end{split}
\end{align}
For $j=1,2$,
let $\LL^\bullet_j$ be the set of limiting values of $\hh(Z_{j,\infty},F)$
(see \eqref{eq-ch2-def-LL}).
We have the following surjection,
\begin{align}
\label{eq-ch3-filt-hh-12}
\begin{split}
\hh(Z_{12,\infty},F) & \rightarrow \LL^\bullet_1 \cap \LL^\bullet_2 \\
(\omega_1,\omega_2,\hat{\omega}) & \mapsto \hat{\omega}.
\end{split}
\end{align}
From \eqref{eq-ch2-ses-LL},
\eqref{eq-ch3-inj-hh-12} and
\eqref{eq-ch3-filt-hh-12},
we obtain the following short exact sequence
\begin{equation}
\label{eq-ch3-ses-hh}
0 \rightarrow \hhl(Z_{1,\infty},F) \oplus \hhl(Z_{2,\infty},F)
\rightarrow \hh(Z_{12,\infty},F)
\rightarrow \LL^\bullet_1 \cap \LL^\bullet_2
\rightarrow 0 .
\end{equation}

For $\lambda\in (-\delta_Y,0)\cup(0,\delta_Y)$
(see \eqref{eq-ch2-def-deltaY}),
set
\begin{align}
\label{eq-ch3-def-Einf}
\begin{split}
\mathscr{E}_\lambda(Z_{1,\infty}, F) & =
\big\{ (\omega, \omega^\mathrm{zm})
\in \Omega^\bullet(Z_{1,\infty},F) \times \Omega^\bullet(Y_{[0,+\infty)},F) \;:\; \\
& \hspace{10mm} \omega \text{ is a generalized eigensection of }  D^F_{Z_{1,\infty},\mathrm{ac}}
\text{ associated with } \lambda, \\
& \hspace{70mm} \omega^\mathrm{zm} \text{ is the zeromode of } \omega \big\},
\end{split}
\end{align}
where the zeromode is defined in \eqref{eq-ch2-def-zm-nz-1}.
We construct $\mathscr{E}_\lambda(Z_{2,\infty}, F)$ in the same way,
i.e., \eqref{eq-ch3-def-Einf} with $Z_{1,\infty}$ replaced by $Z_{2,\infty}$
and $Y_{[0,+\infty)}$ replaced by $Y_{(-\infty,0]}$.
For $R > 2$, set
\begin{align}
\label{eq-ch3-def-Einf-R}
\begin{split}
\mathscr{E}_{\lambda,R}(Z_{12,\infty},F)
= \big\{ (\omega_1, \omega_1^\mathrm{zm}, \omega_2, \omega_2^\mathrm{zm}) & \;:\;
(\omega_j,\omega_j^\mathrm{zm})\in\mathscr{E}_\lambda(Z_{j,\infty},F) \text{ for } j=1,2,\\
& \hspace{10mm} \omega_1^\mathrm{zm}\big|_{Y_{[0,2R]}} = \omega_2^\mathrm{zm}\big|_{Y_{[-2R,0]}}
\text{ under \eqref{eq-ch3-coordinates}} \big\}.
\end{split}
\end{align}

For $j=1,2$,
let $C_j(\lambda)\in\End(\hh(Y,F[du]))$
be the scattering matrix associated with $D^F_{Z_{j,\infty}}$
(see \eqref{eq-ch2-def-C-lambda-1} and \eqref{eq-ch2-def-C-lambda-2}).
From Proposition \ref{prop-ch2-C-lambda},
we have
\begin{equation}
\label{eq-ch3-def-C12}
C_{12}(\lambda) := C_2^{-1}(\lambda)C_1(\lambda)
\in \End(\hh(Y,F)) \oplus \End(\hh(Y,F)du).
\end{equation}
For $R > 2$,
set
\begin{equation}
\label{eq-ch3-def-Lambda}
\Lambda_R = \big\{ \lambda\in\R \;:\;
\det \big( e^{4i\lambda R}C_{12}(\lambda)\big|_{\hh(Y,F)} -\Id \big) = 0 \big\},
\end{equation}
which is viewed as a multiset.

\begin{lemma}
\label{lem-ch3-LambdaR}
We have
\begin{equation}
\big\{\lambda \in\R \;:\;  \mathscr{E}_{\lambda,R}(Z_{12,\infty},F)\neq 0 \big\} = \Lambda_R.
\end{equation}
\end{lemma}
\begin{proof}
Let $(\omega_1, \omega_1^\mathrm{zm}, \omega_2, \omega_2^\mathrm{zm})
\in\mathscr{E}_{\lambda,R}(Z_{12,\infty},F)$.
By Proposition \ref{prop-ch2-g-eigen-Dinf},
we have
\begin{align}
\label{eq1-pf-lem-ch3-LambdaR}
\begin{split}
\omega_1^\mathrm{zm}\big|_{Y_{[0,2R]}} & =
e^{-i\lambda u_1} \big(\phi_1-i\cliffu\phi_1\big) +
e^{i\lambda u_1} C_1(\lambda) \big(\phi_1-i\cliffu\phi_1\big), \\
\omega_2^\mathrm{zm}\big|_{Y_{[-2R,0]}} & =
e^{-i\lambda u_2} \big(\phi_2-i\cliffu\phi_2\big) +
e^{i\lambda u_2} C_2(\lambda) \big(\phi_2-i\cliffu\phi_2\big),
\end{split}
\end{align}
with $\phi_1,\phi_2\in\hh(Y,F)$.
By \eqref{eq-ch3-def-Einf-R},
we have
$\omega_1^\mathrm{zm}\big|_{Y_{[0,2R]}} = \omega_2^\mathrm{zm}\big|_{Y_{[-2R,0]}}$.
Now applying \eqref{eq-ch3-coordinates} and \eqref{eq1-pf-lem-ch3-LambdaR},
we get
\begin{align}
\label{eq2-pf-lem-ch3-LambdaR}
\begin{split}
\phi_1-i\cliffu\phi_1 & =
e^{2iR\lambda}\big(\phi_2-i\cliffu\phi_2\big),\\
C_1(\lambda) \big(\phi_1-i\cliffu\phi_1\big) & =
e^{-2iR\lambda}C_2(\lambda) \big(\phi_2-i\cliffu\phi_2\big).
\end{split}
\end{align}
By \eqref{eq-ch3-def-C12} and \eqref{eq2-pf-lem-ch3-LambdaR},
we have
\begin{equation}
\label{eq3-pf-lem-ch3-LambdaR}
e^{4iR\lambda}C_{12}(\lambda)\big(\phi_2-i\cliffu\phi_2\big)
= \phi_2-i\cliffu\phi_2.
\end{equation}
By Proposition \ref{prop-ch2-C-lambda} and \eqref{eq-ch3-def-C12},
equation \eqref{eq3-pf-lem-ch3-LambdaR} is equivalent to
\begin{equation}
\label{eq4-pf-lem-ch3-LambdaR}
e^{4iR\lambda}C_{12}(\lambda)\phi_2 = \phi_2.
\end{equation}
Thus we obtained a map
\begin{equation}
\label{eq5-pf-lem-ch3-LambdaR}
\mathscr{E}_{\lambda,R}(Z_{12,\infty},F) \rightarrow
\mathrm{Ker} \big( e^{4i\lambda R}C_{12}(\lambda)\big|_{\hh(Y,F)} -\Id \big)
\end{equation}
sending $(\omega_1,\omega_1^\mathrm{zm},\omega_2,\omega_2^\mathrm{zm}) \in \mathscr{E}_{\lambda,R}(Z_{12,\infty},F)$
to $\phi_2\in\hh(Y,F)$ determined by \eqref{eq1-pf-lem-ch3-LambdaR}.
Moreover,
by Proposition \ref{prop-ch2-g-eigen-Dinf},
the map \eqref{eq5-pf-lem-ch3-LambdaR} is bijective.
This completes the proof of Lemma \ref{lem-ch3-LambdaR}.
\end{proof}

\subsection{Approximating the kernels}
\label{ch3-3}

Let $\psi \in \smooth(\R)$ such that
\begin{equation}
\label{eq-ch3-def-chi-1}
0\leqslant \psi'\leqslant 3, \quad
\psi(u) = 0 \quad\text{for } u\leqslant 1/4, \quad
\psi(u) = 1 \quad\text{for } u\geqslant 3/4.
\end{equation}
For $j=1,2$,
we construct $\chi_{j,R} \in \smooth([-R,R])$ as follows,
\begin{equation}
\label{eq-ch3-def-chi-2}
\chi_{j,R}(u) = \psi\big((-1)^j u/R\big) ,
\end{equation}
which we view as a function on $Y_{[-R,R]}$,
i.e., $\chi_{j,R}(u,y) = \chi_{j,R}(u)$ for $(u,y)\in Y_{[-R,R]}$.

Let
\begin{align}
\label{eq-ch3-R}
\begin{split}
\mathscr{R}_{d^F,1} & : \;
\hh(Z_{1,\infty},F) \rightarrow \Omega^{\bullet-1}(Y_{[0,+\infty)},F), \\
\mathscr{R}_{d^{F,*},1} & : \;
\hh(Z_{1,\infty},F) \rightarrow \Omega^{\bullet+1}(Y_{[0,+\infty)},F),\\
\mathscr{R}_{d^F,2} & : \;
\hh(Z_{2,\infty},F) \rightarrow \Omega^{\bullet-1}(Y_{(-\infty,0]},F), \\
\mathscr{R}_{d^{F,*},2} & : \;
\hh(Z_{2,\infty},F) \rightarrow \Omega^{\bullet+1}(Y_{(-\infty,0]},F)
\end{split}
\end{align}
be the maps in \eqref{eq-ch2-def-R-1}
with $X_\infty$ replaced by $Z_{1,\infty}$ and $Z_{2,\infty}$.
We compose the maps in \eqref{eq-ch3-R}
with the restriction maps induced by
\begin{equation}
\label{eq-ch3-YR}
Y_{[-R,R]} \simeq Y_{[0,2R]} \subseteq Y_{[0,+\infty)},\hspace{5mm}
Y_{[-R,R]} \simeq Y_{[-2R,0]} \subseteq Y_{(-\infty,0]}.
\end{equation}
Now these maps take values in $\Omega^\bullet(Y_{[-R,R]},F)$.

We construct
\begin{equation}
F_{Z_R}, G_{Z_R} :\; \hh(Z_{12,\infty},F) \rightarrow \Omega^\bullet(Z_R,F)
\end{equation}
as follows,
\begin{align}
\label{eq-ch3-def-FG}
\begin{split}
F_{Z_R}(\omega_1,\omega_2,\hat{\omega})\big|_{Z_{j,0}} = & \;
G_{Z_R}(\omega_1,\omega_2,\hat{\omega})\big|_{Z_{j,0}} = \omega_j, \quad \text{for } j=1,2,\\
F_{Z_R}(\omega_1,\omega_2,\hat{\omega})\big|_{Y_{[-R,R]}} = & \;
\pi_Y^*\hat{\omega} +
\sum_{j=1}^2 d^F \big( \chi_{j,R}\;\mathscr{R}_{d^F, j}(\omega_j,\hat{\omega}) \big), \\
G_{Z_R}(\omega_1,\omega_2,\hat{\omega})\big|_{Y_{[-R,R]}} = & \;
\pi_Y^*\hat{\omega} +
\sum_{j=1}^2 d^{F,*} \big( \chi_{j,R}\;\mathscr{R}_{d^{F,*}, j}(\omega_j,\hat{\omega}) \big).
\end{split}
\end{align}
By Proposition \ref{prop-ch2-R},
$F_{Z_R}$ and $G_{Z_R}$ are well-defined.
Moreover,
we have
\begin{equation}
\label{eq-ch3-FG-close}
d^F F_{Z_R}(\omega_1,\omega_2,\hat{\omega}) =
d^{F,*} G_{Z_R}(\omega_1,\omega_2,\hat{\omega}) = 0 .
\end{equation}

Let $\phi_R : [-R-1,R+1] \rightarrow [-1,1]$ be an odd smooth function such that
$\phi_{R}'(u) > 0$ and $\phi_R(u) = u+R$ for $u \in [-R-1,-R-1/2]$.
We construct a diffeomorphism $\varphi_R : Z_R \rightarrow Z$ as follows,
\begin{equation}
\label{eq-ch3-def-varphiR}
\varphi_R = \mathrm{Id}\quad\text{on } Z_R\backslash Y_{[-R-1,R+1]}, \quad
\varphi_R(u,y) = (\phi_R(u),y)\quad \text{for } (u,y)\in Y_{[-R-1,R+1]}.
\end{equation}
Then $\varphi_R$ induces an isomorphism
$\varphi_{R,*}: H^\bullet(Z_R,Y) \rightarrow H^\bullet(Z,F)$.

\begin{prop}
\label{prop-ch3-F-ind}
For $(\omega_1,\omega_2,\hat{\omega}) \in \hh(Z_{12,\infty},F)$ with $\hat{\omega}\in\hh(Y,F)$,
under the identification $\varphi_{R,*}$,
the cohomology class
$\big[ F_{Z_R}(\omega_1,\omega_2,\hat{\omega}) \big] \in H^\bullet(Z,F)$
is independent of $R$.
\end{prop}
\begin{proof}
Let $R'\in[R,7R/6]$.
Let $\phi_{R,R'} : [-R,R] \rightarrow [-R',R']$ be an odd function such that
\begin{equation}
\label{eq1-pf-prop-ch3-F-ind}
\phi_{R,R'}(u)  = \left\{
\begin{array}{ll}
u - R' + R & \text{ if } u\in[-R,-\frac{1}{8}R] ,\\
u -(R'-R)\chi_{1,R/8}(u) & \text{ if } u\in[-\frac{1}{8}R,0].
\end{array}\right.
\end{equation}
Then $\phi_{R,R'}'(u)>0$.
Let $\varphi_{R,R'} : Z_R \rightarrow Z_{R'}$ be the diffeomorphism defined by $\phi_{R,R'}$
in the same way as \eqref{eq-ch3-def-varphiR}.
Then $\varphi_{R,R'}$ is homotopic to $\varphi_{R'}^{-1}\circ\varphi_R$.
Let $\mu\in\Omega^\bullet(Z_R,F)$ such that
\begin{equation}
\label{eq2-pf-prop-ch3-F-ind}
\mu\big|_{Z_R\backslash Y_{[-R,R]}} = 0 ,\;\;
\mu\big|_{Y_{[-R,R]}} = \sum_{j=1}^2 \big( \chi_{j,R} - \phi_{R,R'}^*\chi_{j,R'} \big)
\mathscr{R}_{d^F, j}(\omega_j,\hat{\omega})  .
\end{equation}
By \eqref{eq-ch3-def-FG},
\eqref{eq1-pf-prop-ch3-F-ind} and
\eqref{eq2-pf-prop-ch3-F-ind},
we have
\begin{equation}
\label{eq3-pf-prop-ch3-F-ind}
F_{Z_R}(\omega_1,\omega_2,\hat{\omega}) -
\varphi^*_{R,R'} F_{Z_{R'}}(\omega_1,\omega_2,\hat{\omega}) = d^F\mu,
\end{equation}
which implies
$\varphi_{R,*}\big[ F_{Z_R}(\omega_1,\omega_2,\hat{\omega}) \big] =
\varphi_{R',*}\big[ F_{Z_{R'}}(\omega_1,\omega_2,\hat{\omega}) \big] \in H^\bullet(Z,F)$.
The proof of Proposition \ref{prop-ch3-F-ind} is completed.
\end{proof}

For $\alpha>0$,
we denote by $\mathscr{O}(\alpha)$ a number in $[-\alpha\beta,\alpha\beta]$
with $\beta>0$ independent of $R$.
\textbf{In the sequel, $R$ is always supposed to be large enough.}

\begin{prop}
\label{prop-ch3-approx-f-g-total}
There exists $a>0$ such that
for $(\omega_1,\omega_2,\hat{\omega})\in\hh(Z_{12,\infty},F)$,
we have
\begin{equation}
\label{eq-prop-ch3-approx-f-g-total}
\big\lVert F_{Z_R}(\omega_1,\omega_2,\hat{\omega})\big\rVert_{Z_R}
= \Big( 1 + \mathscr{O}\big(e^{-aR}\big) \Big)
\big\lVert (\omega_1,\omega_2,\hat{\omega}) \big\rVert_{\hh(Z_{12,\infty},F),R}.
\end{equation}
\end{prop}
\begin{proof}
By \eqref{eq-ch3-def-FG} on $Y_{[0,R]}\subseteq Z_{1,R}$,
$F_{Z_R}(\omega_1,\omega_2,\hat{\omega}) - \omega_1$ vanishes on $Z_{1,0}$.
By Proposition \ref{prop-ch2-R} and \eqref{eq-ch3-def-FG},
\begin{align}
\label{eq1-pf-prop-ch3-approx-f-g-total}
\begin{split}
& \big( F_{Z_R}(\omega_1,\omega_2,\hat{\omega}) - \omega_1 \big)\big|_{Y_{[0,R]}}
= d^F \big( \chi_{1,R}\;\mathscr{R}_{d^F, 1}(\omega_1,\hat{\omega}) \big)
+ \pi_Y^*\hat{\omega} - \omega_1 \\
& = \Big( \frac{\partial}{\partial u} \chi_{1,R} \Big)
du \wedge \mathscr{R}_{d^F, 1}(\omega_1,\hat{\omega})
+ \big( \chi_{1,R} - 1 \big)(\omega_1 - \pi_Y^*\hat{\omega}).
\end{split}
\end{align}
By \eqref{eq-ch3-def-chi-1} and \eqref{eq-ch3-def-chi-2},
$\frac{\partial}{\partial u} \chi_{1,R}$ is bounded by $1$
and with support in $Y_{[-3R/4,-R/4]}$,
$\chi_{1,R}-1$ is bounded by $1$
and with support in $Y_{[-3R/4,0]}$.
Then we have
\begin{equation}
\label{eq2-pf-prop-ch3-approx-f-g-total}
\big\lVert F_{Z_R}(\omega_1,\omega_2,\hat{\omega}) - \omega_1 \big\rVert_{Z_{1,R}}
\leqslant
\big\lVert \mathscr{R}_{d^F, 1}(\omega_1,\hat{\omega}) \big\rVert_{Y_{[-3R/4,-R/4]}} +
\big\lVert \omega_1 - \pi_Y^*\hat{\omega} \big\rVert_{Y_{[-3R/4,0]}}.
\end{equation}
By \eqref{eq-ch2-def-R-2},
we have
\begin{equation}
\label{eq3-pf-prop-ch3-approx-f-g-total}
\big\lVert \mathscr{R}_{d^F, 1}(\omega_1,\hat{\omega}) \big\rVert^2_{Y_{[-3R/4, -R/4]}}
\leqslant
\delta^{-2}_Ye^{-\frac{1}{2}\delta_YR} \big\lVert \omega_1 \big\rVert^2_{\partial Z_{1,0}}.
\end{equation}
By Lemma \ref{lem-ch2-zm-nz},
\eqref{eq-ch2-nz-decreasing},
\eqref{eq-ch2-def-hh} and
\eqref{eq-ch3-YR},
we have
\begin{align}
\label{eq4-pf-prop-ch3-approx-f-g-total}
\begin{split}
\big\lVert \omega_1 - \pi_Y^*\hat{\omega} \big\rVert^2_{Y_{[-3R/4,0]}}
& \leqslant
2 \delta^{-1}_Y \big( 1-e^{-\frac{3}{8}\delta_YR} \big)^{-2}
\big\lVert \omega_1 - \pi_Y^*\hat{\omega} \big\rVert^2_{Y_{-3R/4} \cup Y_0} \\
& \leqslant
4 \delta^{-1}_Y \big( 1-e^{-\frac{3}{8}\delta_YR} \big)^{-2} e^{-\frac{1}{2}\delta_Y R}
\big\lVert \omega_1 \big\rVert^2_{\partial Z_{1,0}}.
\end{split}
\end{align}

Let $\big\lVert\cdot\big\rVert_{1,Z_{1,0}}$
be the $H^1$-Sobolev norm on $\smooth(Z_{1,0},F)$
induced by $g^{TZ}$ and $h^F$.
By the trace theorem,
$\big\lVert\cdot\big\rVert^2_{\partial Z_{1,0}} =
\mathscr{O}(1) \big\lVert\cdot\big\rVert^2_{1,Z_{1,0}}$.
By ellipticity of $D^F_{Z_{1,0}}$,
we have
$\big\lVert\cdot\big\rVert^2_{1,Z_{1,0}} =
\mathscr{O}(1)\big( \big\lVert\cdot\big\rVert^2_{Z_{1,0}}
+ \big\lVert D^F_{Z_{1,0}}\cdot\big\rVert^2_{Z_{1,0}}\big)$.
Applying these estimates to $\omega_1$
and using the fact that $D^F_{Z_{1,\infty}}\omega_1=0$,
we get
\begin{equation}
\label{eq5-pf-prop-ch3-approx-f-g-total}
\big\lVert \omega_1 \big\rVert_{\partial Z_{1,0}} =
\mathscr{O}(1) \big\lVert \omega_1 \big\rVert_{1,Z_{1,0}} =
\mathscr{O}(1) \big\lVert \omega_1 \big\rVert_{Z_{1,0}}
\end{equation}

Set $a=\delta_Y/5$.
By \eqref{eq2-pf-prop-ch3-approx-f-g-total}-\eqref{eq5-pf-prop-ch3-approx-f-g-total},
we have
\begin{equation}
\label{eq6-pf-prop-ch3-approx-f-g-total}
\big\lVert F_{Z_R}(\omega_1,\omega_2,\hat{\omega}) - \omega_1 \big\rVert_{Z_{1,R}}
= \mathscr{O}\big(e^{-aR}\big) \big\lVert \omega_1 \big\rVert_{Z_{1,R}}.
\end{equation}
For the same reasons,
we have
\begin{equation}
\label{eq7-pf-prop-ch3-approx-f-g-total}
\big\lVert F_{Z_R}(\omega_1,\omega_2,\hat{\omega}) - \omega_2 \big\rVert_{Z_{2,R}}
= \mathscr{O}\big(e^{-aR}\big) \big\lVert \omega_2 \big\rVert_{Z_{2,R}}.
\end{equation}
Then \eqref{eq-prop-ch3-approx-f-g-total} follows from
\eqref{eq-ch3-def-metric-hh},
\eqref{eq6-pf-prop-ch3-approx-f-g-total}
and \eqref{eq7-pf-prop-ch3-approx-f-g-total}.
The proof of Proposition \ref{prop-ch3-approx-f-g-total} is completed.
\end{proof}

For $\omega \in \Omega^\bullet(Z_R,F)$,
we denote
\begin{equation}
\big\Vert \omega \big\rVert_{\continu\!,Z_R} =
\sup_{x\in Z_R} \big| \omega_x \big|_{\Lambda^\bullet(T_x^{*}X)\otimes F_x}.
\end{equation}

\begin{prop}
\label{prop-ch3-sobolev}
Let $k$ be an integer greater than $\frac{n}{2}$.
For $\omega \in \Omega^\bullet(Z_R,F)$,
\begin{equation}
\label{eq-prop-ch3-sobolev}
\big\lVert \omega \big\rVert_{\continu\!,Z_R} =
\mathscr{O}(1) \big( \big\lVert \omega \big\rVert_{Z_R}
+ \big\lVert D^{F,k}_{Z_R} \omega \big\rVert_{Z_R} \big).
\end{equation}
\end{prop}
\begin{proof}
For any $(u,y)\in Y_{[-R,R]}$,
we apply the Sobolev inequality on $Y_{[u-1,u+1]}$.
Since $g^{TZ}$ and $h^F$ are product on $Y_{[u-1,u+1]}$,
there exists $C_1>0$ independent of $R>1$ such that
for any $\omega\in\Omega^\bullet(Z_R,F)$,
we have
\begin{equation}
\label{eq1-pf-prop-ch3-sobolev}
\big| \omega_{(u,y)} \big|
\leqslant C_1\Big( \big\lVert\omega\big\rVert_{Y_{[u-1,u+1]}}
+ \big\lVert D^{F,k}_{Z_R}\omega\big\rVert_{Y_{[u-1,u+1]}} \Big).
\end{equation}
Combing \eqref{eq1-pf-prop-ch3-sobolev}
with the Sobolev inequalities on $Z_{1,1}$ and $Z_{2,1}$,
we obtain \eqref{eq-prop-ch3-sobolev}.
This completes the proof of Proposition \ref{prop-ch3-sobolev}.
\end{proof}

Let $P^{\Ker\left(D^{F,2}_{Z_R}\right)} : \Omega^\bullet(Z_R,F) \rightarrow \Ker\big(D^{F,2}_{Z_R}\big)$ be the orthogonal projection.
Set
\begin{align}
\label{eq-ch3-def-FG-harmonic}
\begin{split}
& \mathscr{F}_{Z_R} = P^{\Ker\left(D^{F,2}_{Z_R}\right)}\circ F_{Z_R}:
\hh(Z_{12,\infty},F) \rightarrow \Ker\big(D^{F,2}_{Z_R}\big), \\
& \mathscr{G}_{Z_R} = P^{\Ker\left(D^{F,2}_{Z_R}\right)}\circ G_{Z_R}:
\hh(Z_{12,\infty},F) \rightarrow \Ker\big(D^{F,2}_{Z_R}\big).
\end{split}
\end{align}

\begin{prop}
\label{prop-ch3-approx-harmonic-tot-inj}
There exists $a>0$ such that
for $(\omega_1,\omega_2,\hat{\omega})\in\hh(Z_{12,\infty},F)$,
\begin{equation}
\label{eq-prop-ch3-approx-harmonic-tot-inj}
\big\lVert (F_{Z_R}-\mathscr{F}_{Z_R})(\omega_1,\omega_2,\hat{\omega}) \big\rVert_{\continu\!,Z_R}
= \mathscr{O}\big(e^{-aR}\big)
\big\lVert (\omega_1,\omega_2,\hat{\omega}) \big\rVert_{\hh(Z_{12,\infty},F),R}.
\end{equation}
As a consequence,
$\mathscr{F}_{Z_R} : \hh(Z_{12,\infty},F) \rightarrow \Ker\big(D^{F,2}_{Z_R}\big)$
is injective.
\end{prop}
\begin{proof}
By Proposition \ref{prop-ch2-R} and \eqref{eq-ch3-def-FG},
we have
\begin{align}
\label{eq1-pf-prop-ch3-approx-harmonic-tot-inj}
\begin{split}
& \big( F_{Z_R} - G_{Z_R} \big) (\omega_1,\omega_2,\hat{\omega}) \big|_{Y_{[-R,R]}} \\
& = \sum_{j=1}^2 \Big( \frac{\partial}{\partial u}\chi_{j,R} \Big)
\Big( du \wedge \mathscr{R}_{d^F, j}(\omega_j,\hat{\omega})
+ \cntrtu \mathscr{R}_{d^{F,*}, j}(\omega_j,\hat{\omega}) \Big) .
\end{split}
\end{align}
By Proposition \ref{prop-ch2-R},
\eqref{eq-ch1-def-D}
and \eqref{eq1-pf-prop-ch3-approx-harmonic-tot-inj},
for $m\in\N$,
we have
\begin{align}
\label{eq2-pf-prop-ch3-approx-harmonic-tot-inj}
\begin{split}
& D^{F,2m}_{Z_R} \big( F_{Z_R} - G_{Z_R} \big) (\omega_1,\omega_2,\hat{\omega}) \big|_{Y_{[-R,R]}} \\
& =  (-1)^m \sum_{j=1}^2 \Big(\frac{\partial^{2m+1}}{\partial u^{2m+1}}\chi_{j,R} \Big)
\Big( du \wedge \mathscr{R}_{d^F, j}(\omega_j,\hat{\omega})
+ \cntrtu \mathscr{R}_{d^{F,*}, j}(\omega_j,\hat{\omega}) \Big).
\end{split}
\end{align}

By \eqref{eq-ch3-def-chi-2},
$\frac{\partial^{2m+1}}{\partial u^{2m+1}}\chi_{1,R}$
(resp. $\frac{\partial^{2m+1}}{\partial u^{2m+1}}\chi_{2,R}$)
is $\mathscr{O}\big(R^{-2m-1}\big)$
and with support in $[-3R/4, -R/4]$
(resp. $[R/4, 3R/4]$).
Then,
by \eqref{eq2-pf-prop-ch3-approx-harmonic-tot-inj},
we have
\begin{align}
\label{eq3-pf-prop-ch3-approx-harmonic-tot-inj}
\begin{split}
& \big\lVert D^{F,2m}_{Z_R}\big(F_{Z_R}-G_{Z_R}\big)(\omega_1,\omega_2,\hat{\omega})\big\rVert^2_{Z_R} \\
& = \mathscr{O}\big(R^{-4m-2}\big) \Big(
\big\lVert \mathscr{R}_{d^F,1}(\omega_1,\hat{\omega}) \big\rVert^2_{Y_{[-3R/4,-R/4]}}
+ \big\lVert \mathscr{R}_{d^F,2}(\omega_2,\hat{\omega}) \big\rVert^2_{Y_{[R/4,3R/4]}} \\
& \hspace{25mm} + \big\lVert \mathscr{R}_{d^{F,*},1}(\omega_1,\hat{\omega}) \big\rVert^2_{Y_{[-3R/4,-R/4]}}
+ \big\lVert \mathscr{R}_{d^{F,*},2}(\omega_2,\hat{\omega}) \big\rVert^2_{Y_{[R/4,3R/4]}}
\Big).
\end{split}
\end{align}
Set $a=\delta_Y/5$.
Proceeding in the same way as in
\eqref{eq3-pf-prop-ch3-approx-f-g-total} and
applying \eqref{eq-ch3-YR}, \eqref{eq5-pf-prop-ch3-approx-f-g-total},
we see that
the norms on the right hand side of \eqref{eq3-pf-prop-ch3-approx-harmonic-tot-inj}
are
$\mathscr{O}\big(e^{-aR}\big)
\big( \big\lVert\omega_1\big\rVert_{Z_{1,R}} + \big\lVert\omega_2\big\rVert_{Z_{2,R}} \big)$.
Combining with \eqref{eq-ch3-def-metric-hh},
we get
\begin{equation}
\label{eq4-pf-prop-ch3-approx-harmonic-tot-inj}
\big\lVert D^{F,2m}_{Z_R}\big(F_{Z_R}-G_{Z_R}\big)(\omega_1,\omega_2,\hat{\omega})\big\rVert_{Z_R}
= \mathscr{O}\big(e^{-aR}\big)
\big\lVert (\omega_1,\omega_2,\hat{\omega}) \big\rVert_{\hh(Z_{12,\infty},F),R}.
\end{equation}

We denote
$\mu_0 = \big(\mathscr{F}_{Z_R}-\mathscr{G}_{Z_R}\big)(\omega_1,\omega_2,\hat{\omega})$,
$\mu_1 = \big(F_{Z_R}-\mathscr{F}_{Z_R}\big)(\omega_1,\omega_2,\hat{\omega})$
and $\mu_2 = \big(G_{Z_R}-\mathscr{G}_{Z_R}\big)(\omega_1,\omega_2,\hat{\omega})$.
Then we have
\begin{equation}
\label{eq5-pf-prop-ch3-approx-harmonic-tot-inj}
\big(F_{Z_R}-G_{Z_R}\big)(\omega_1,\omega_2,\hat{\omega}) = \mu_0 + \mu_1 - \mu_2.
\end{equation}
By Theorem \ref{thm-ch1-Hodge} for the manifold without boundary $Z_R$,
\eqref{eq-ch3-FG-close} and \eqref{eq-ch3-def-FG-harmonic},
we have
\begin{equation}
\label{eq6-pf-prop-ch3-approx-harmonic-tot-inj}
\mu_0 \in \Ker\big(D^{F,2}_{Z_R}\big), \quad
\mu_1 \in \im\big(d^F\big), \quad
\mu_2 \in \im\big(d^{F,*}\big).
\end{equation}
Since $D^{F,2m}_{Z_R}$ commutes with $d^F$ and $d^{F,*}$
(see \eqref{eq-ch1-def-D2}),
\eqref{eq6-pf-prop-ch3-approx-harmonic-tot-inj} holds with
$\mu_j$ ($j=0,1,2$) replaced by $D^{F,2m}_{Z_R}\mu_j$.
As a consequence,
$\big(D^{F,2m}_{Z_R} \mu_j\big)_{j=0,1,2}$ are mutually orthogonal.
Then,
by \eqref{eq4-pf-prop-ch3-approx-harmonic-tot-inj}
and \eqref{eq5-pf-prop-ch3-approx-harmonic-tot-inj},
we have
\begin{align}
\label{eq7-pf-prop-ch3-approx-harmonic-tot-inj}
\begin{split}
& \big\lVert D^{F,2m}_{Z_R}(F_{Z_R}-\mathscr{F}_{Z_R})(\omega_1,\omega_2,\hat{\omega}) \big\rVert_{Z_R}
= \big\lVert D^{F,2m}_{Z_R} \mu_1 \big\rVert_{Z_R} \\
& \leqslant \big\lVert D^{F,2m}_{Z_R}(F_{Z_R}-G_{Z_R})(\omega_1,\omega_2,\hat{\omega}) \big\rVert_{Z_R} \\
& = \mathscr{O}\big(e^{-aR}\big)
\big\lVert (\omega_1,\omega_2,\hat{\omega})\big\rVert_{\hh(Z_{12,\infty},F),R}.
\end{split}
\end{align}
By Proposition \ref{prop-ch3-sobolev}
and \eqref{eq7-pf-prop-ch3-approx-harmonic-tot-inj},
we obtain \eqref{eq-prop-ch3-approx-harmonic-tot-inj}.
From \eqref{eq-prop-ch3-approx-f-g-total}
and \eqref{eq-prop-ch3-approx-harmonic-tot-inj},
we know that the map
$\mathscr{F}_{Z_R} : \hh(Z_{12,\infty},F) \rightarrow \Ker\big(D^{F,2}_{Z_R}\big)$
is injective.
The proof of Proposition \ref{prop-ch3-approx-harmonic-tot-inj} is completed.
\end{proof}

\begin{rem}
In the proof of Proposition \ref{prop-ch3-approx-harmonic-tot-inj},
Hodge decomposition is used in an essential way.
Hence our argument is not valid for general Dirac operators.
\end{rem}

\begin{thm}
\label{thm-ch3-approx-harmonic-tot}
For $R>2$ large enough,
the map $\mathscr{F}_{Z_R}$ is bijective.
\end{thm}
\begin{proof}
By Proposition \ref{prop-ch3-approx-harmonic-tot-inj},
$\mathscr{F}_{Z_R}$ is injective.
It remains to show that $\mathscr{F}_{Z_R}$ is surjective.

Suppose that,
on the contrary,
there exist $R_i\rightarrow+\infty$ and
$\omega_i\in\Omega^\bullet(Z_{R_i},F)$
such that
\begin{equation}
\label{eq1-pf-thm-ch3-approx-harmonic-tot}
D^F_{Z_{R_i}}\omega_i = 0, \quad
\omega_i\neq 0, \quad
\omega_i\perp\im(\mathscr{F}_{Z_{R_i}}).
\end{equation}
By Lemma \ref{lem-ch2-zm-nz},
$\omega_i\big|_{Z_{1,0} \cup Z_{2,0}}\neq 0$.
By multiplying suitable constants,
we may assume that
\begin{equation}
\label{eq2-pf-thm-ch3-approx-harmonic-tot}
\big\lVert \omega_i \big\rVert^2_{Z_{1,0}} +
\big\lVert \omega_i \big\rVert^2_{Z_{2,0}} = 1 .
\end{equation}

By Lemma \ref{lem-ch2-zm-nz},
\eqref{eq5-pf-prop-ch3-approx-f-g-total}
and \eqref{eq2-pf-thm-ch3-approx-harmonic-tot},
there exists $a>0$ such that
for $T\in[0,R_i]$,
\begin{equation}
\label{eq3-pf-thm-ch3-approx-harmonic-tot}
\big\lVert \omega_i \big\rVert_{Z_{1,T}}^2 \leqslant 1 + aT.
\end{equation}
As a consequence,
for $T$ fixed,
the series $\big(\omega_i\big|_{Z_{1,T}}\big)_i$ is $L^2$-bounded.
Moreover,
by \eqref{eq1-pf-thm-ch3-approx-harmonic-tot},
the series $\big(\omega_i\big|_{Z_{1,T}}\big)_i$ is $H^2$-bounded.
By Rellich's lemma,
we may suppose that
$\big(\omega_i\big|_{Z_{1,T}}\big)_i$ $H^1$-converges.
By a diagonal argument (involving $i$ and $T$),
we may supppose that
for any $T>0$,
$\big(\omega_i\big|_{Z_{1,T}}\big)_i$ $H^1$-converges.
Then there exists $\omega_{1,\infty}$
a current on $Z_{1,\infty}$ with values in $F$
such that for any $T\in\N$,
$\big(\omega_i\big|_{Z_{1,T}}\big)_i$ $H^1$-converges to $\omega_{1,\infty}\big|_{Z_{1,T}}$.
Now,
taking $i\rightarrow+\infty$
in \eqref{eq1-pf-thm-ch3-approx-harmonic-tot},
we get
\begin{equation}
\label{eq4-pf-thm-ch3-approx-harmonic-tot}
D^F_{Z_{1,\infty}}\omega_{1,\infty} = 0.
\end{equation}
Thus $\omega_{1,\infty}\in\Omega^\bullet(Z_{1,\infty},F)$.
Taking $i\rightarrow+\infty$
in \eqref{eq3-pf-thm-ch3-approx-harmonic-tot},
we get
\begin{equation}
\label{eq5-pf-thm-ch3-approx-harmonic-tot}
\big\lVert \omega_{1,\infty} \big\rVert_{Z_{1,T}}^2 \leqslant 1 + aT.
\end{equation}
By \eqref{eq-ch2-def-zm-nz-1}
and \eqref{eq5-pf-thm-ch3-approx-harmonic-tot},
we have
\begin{equation}
\label{eq6-pf-thm-ch3-approx-harmonic-tot}
\omega_{1,\infty}^+=0\in\Omega^\bullet(Y_{[0,+\infty)},F).
\end{equation}
By \eqref{eq-ch2-def-hh},
\eqref{eq4-pf-thm-ch3-approx-harmonic-tot}
and \eqref{eq6-pf-thm-ch3-approx-harmonic-tot},
$\omega_{1,\infty}$ is an extended $L^2$-solution of $D^F_{Z_{1,\infty}}$, i.e.,
\begin{equation}
\label{eq7-pf-thm-ch3-approx-harmonic-tot}
(\omega_{1,\infty},\hat{\omega}_1)\in\hh(Z_{1,\infty},F),
\end{equation}
where $\hat{\omega}_1 = \omega_{1,\infty}^\mathrm{zm}\big|_{\partial Z_{1,0}}\in H^\bullet(Y,F[du])$.
Moreover,
since $\omega_i\big|_{Z_{1,R_i}}$ $H^1_\mathrm{loc}$-converges to $\omega_{1,\infty}$,
$\omega_i\big|_{\partial Z_{1,0}}$ $L^2$-converges to $\hat{\omega}_1$.

The same argument on $\omega_i\big|_{Z_{2,T}}$
yields
\begin{equation}
\label{eq8-pf-thm-ch3-approx-harmonic-tot}
(\omega_{2,\infty},\hat{\omega}_2)\in\hh(Z_{2,\infty},F)
\end{equation}
satisfying parallel properties.
In particular,
$\omega_i\big|_{\partial Z_{2,0}}$ $L^2$-converges to $\hat{\omega}_2$.
By \eqref{eq-ch2-def-zm-nz-1} for $\lambda=0$, we have
\begin{equation}
\label{eq9-pf-thm-ch3-approx-harmonic-tot}
\omega_i^\mathrm{zm} \big|_{\partial Z_{2,0}} =
\omega_i^\mathrm{zm} \big|_{\partial Z_{1,0}}.
\end{equation}
Taking $i\rightarrow+\infty$ in \eqref{eq9-pf-thm-ch3-approx-harmonic-tot},
we get $\hat{\omega}_1 = \hat{\omega}_2$.
Then,
by \eqref{eq-ch3-def-hh-12},
\eqref{eq7-pf-thm-ch3-approx-harmonic-tot}
and \eqref{eq8-pf-thm-ch3-approx-harmonic-tot},
we get
\begin{equation}
\label{eq10-pf-prop-ch3-approx-harmonic-tot-surj}
(\omega_{1,\infty}, \omega_{2,\infty},\hat{\omega}_1)\in\hh(Z_{12,\infty},F).
\end{equation}
Set
$\mu_i = F_{Z_{R_i}}(\omega_{1,\infty}, \omega_{2,\infty},\hat{\omega}_1)$ and
$\widetilde{\mu}_i = \mathscr{F}_{Z_{R_i}}(\omega_{1,\infty}, \omega_{2,\infty},\hat{\omega}_1)$.

\noindent \textbf{Case 1. $\hat{\omega}_1 = 0$.}
We have
\begin{equation}
\label{eq11-pf-thm-ch3-approx-harmonic-tot}
\big\langle\omega_i,\omega_{1,\infty}\big\rangle_{Z_{1,R_i}} =
\big\langle\omega_i,\omega_{1,\infty}\big\rangle_{Z_{1,0}} +
\big\langle\omega_i^\mathrm{nz},\omega_{1,\infty}^\mathrm{nz}\big\rangle_{Y_{[0,R_i]}}.
\end{equation}
We know that $\omega_i\big|_{Z_{1,R_i}}$
$L^2_\mathrm{loc}$-converges to $\omega_{1,\infty}$
and that $Z_{1,0}$ is compact.
Then we have
\begin{equation}
\label{eq12-pf-thm-ch3-approx-harmonic-tot}
\big\langle\omega_i,\omega_{1,\infty}\big\rangle_{Z_{1,0}} \rightarrow
\big\lVert\omega_{1,\infty}\big\rVert_{Z_{1,0}}^2,
\quad \text{as } i\rightarrow+\infty.
\end{equation}
We know that $\omega_i^\mathrm{nz}\big|_{Z_{1,R_i}}$
$L^2_\mathrm{loc}$-converges to $\omega_{1,\infty}^\mathrm{nz}$.
Then,
by the dominated convergence theorem and \eqref{eq-ch2-nz-decreasing},
we have
\begin{equation}
\label{eq13-pf-thm-ch3-approx-harmonic-tot}
\big\langle\omega_i^\mathrm{nz},\omega_{1,\infty}^\mathrm{nz}\big\rangle_{Y_{[0,R_i]}} \rightarrow
\big\lVert\omega^\mathrm{nz}_{1,\infty}\big\rVert_{Y_{[0,+\infty)}}^2,
\quad \text{as } i\rightarrow+\infty.
\end{equation}
By \eqref{eq6-pf-prop-ch3-approx-f-g-total},
\eqref{eq3-pf-thm-ch3-approx-harmonic-tot} and
\eqref{eq11-pf-thm-ch3-approx-harmonic-tot}-\eqref{eq13-pf-thm-ch3-approx-harmonic-tot},
we have
\begin{equation}
\label{eq14-pf-thm-ch3-approx-harmonic-tot}
\big\langle\omega_i,\mu_i\big\rangle_{Z_{1,R_i}} \rightarrow
\big\lVert\omega_{1,\infty}\big\rVert_{Z_{1,\infty}}^2,
\quad \text{as } i\rightarrow+\infty.
\end{equation}
The same argument also yields
\begin{equation}
\label{eq15-pf-thm-ch3-approx-harmonic-tot}
\big\langle\omega_i,\mu_i\big\rangle_{Z_{2,R_i}} \rightarrow
\big\lVert\omega_{2,\infty}\big\rVert_{Z_{2,\infty}}^2,
\quad \text{as } i\rightarrow+\infty.
\end{equation}
By Proposition \ref{prop-ch3-approx-harmonic-tot-inj},
\eqref{eq14-pf-thm-ch3-approx-harmonic-tot} and
\eqref{eq15-pf-thm-ch3-approx-harmonic-tot},
we get
\begin{equation}
\label{eq16-pf-thm-ch3-approx-harmonic-tot}
\big\langle\omega_i,\widetilde{\mu}_i\big\rangle_{Z_{R_i}} \rightarrow
\big\lVert\omega_{1,\infty}\big\rVert_{Z_{1,\infty}}^2 +
\big\lVert\omega_{2,\infty}\big\rVert_{Z_{2,\infty}}^2 \neq 0,
\quad \text{as } i\rightarrow+\infty.
\end{equation}
However,
by \eqref{eq1-pf-thm-ch3-approx-harmonic-tot},
we have
$\big\langle\omega_i,\widetilde{\mu}_i\big\rangle_{Z_{R_i}}=0$.
Here we get a contradiction.

\noindent \textbf{Case 2. $\hat{\omega}_1 \neq 0$.}
We have
\begin{equation}
\label{eq21-pf-thm-ch3-approx-harmonic-tot}
\big\langle\omega_i,\omega_{1,\infty}\big\rangle_{Z_{1,R_i}} =
\big\langle\omega_i,\omega_{1,\infty}\big\rangle_{Z_{1,0}} +
\big\langle\omega_i^\mathrm{nz},\omega_{1,\infty}^\mathrm{nz}\big\rangle_{Y_{[0,R_i]}} +
\big\langle\omega_i^\mathrm{zm},\omega_{1,\infty}^\mathrm{zm}\big\rangle_{Y_{[0,R_i]}}.
\end{equation}
Here \eqref{eq12-pf-thm-ch3-approx-harmonic-tot}
and \eqref{eq13-pf-thm-ch3-approx-harmonic-tot} still hold.
We just need to consider
$\big\langle\omega_i^\mathrm{zm},\omega_{1,\infty}^\mathrm{zm}\big\rangle_{Y_{[0,R_i]}}$.
We know that $\omega_i^\mathrm{zm}\big|_{\partial Z_{1,0}}$
$L^2$-converges to $\hat{\omega}_1 = \omega_{1,\infty}^\mathrm{zm}\big|_{\partial Z_{1,0}}$.
Then,
by \eqref{eq-ch2-def-zm-nz-1},
we have
\begin{equation}
\label{eq22-pf-thm-ch3-approx-harmonic-tot}
R_i^{-1} \big\langle\omega_i^\mathrm{zm},\omega_{1,\infty}^\mathrm{zm}\big\rangle_{Y_{[0,R_i]}} \rightarrow
\big\lVert\hat{\omega}_1\big\rVert_Y^2,
\quad \text{as } i\rightarrow+\infty.
\end{equation}
Now,
proceeding in the same way as in Case 1,
we get
\begin{equation}
R_i^{-1} \big\langle\omega_i,\widetilde{\mu}_i\big\rangle_{Z_{R_i}} \rightarrow
2\big\lVert\hat{\omega}_1\big\rVert_Y^2 \neq 0,
\quad \text{as } i\rightarrow+\infty,
\end{equation}
which leads to the same contradiction.

The proof of Theorem \ref{thm-ch3-approx-harmonic-tot} is completed.
\end{proof}


\subsection{Approximating the small eigenvalues}
\label{ch3-4}

\

\begin{thm}
\label{thm-ch3-gap}
There exists $\alpha>0$ such that
\begin{equation}
\label{eq-thm-ch3-gap}
\Sp\big(D^F_{Z_R}\big) \subseteq
(-\infty,-\alpha R^{-1}) \cup \{0\} \cup (\alpha R^{-1},+\infty) .
\end{equation}
\end{thm}
\begin{proof}
The proof consists of several steps.

\noindent\textbf{Step 1. }
We prove a result in linear algebra.

Let
\begin{equation}
(W^\bullet, \partial) : 0 \rightarrow W^0 \rightarrow \cdots \rightarrow W^n \rightarrow 0
\end{equation}
be a chain complex of finite dimensional complex vector space.
Let $h^{W^\bullet}_t = \bigoplus_k h^{W^k}_t$ with $t\in\R$
be a smooth family of Hermitian metrics on $W^\bullet$.
We denote
\begin{equation}
Q_t = \big(h^{W^\bullet}_t\big)^{-1}\frac{\partial}{\partial t}h^{W^\bullet}_t
\in \mathrm{End}\big(W^\bullet\big).
\end{equation}
Let $\partial^*_t$ be the adjoint of $\partial$ with respect to $h^{W^\bullet}_t$.
Set $D_t = \partial + \partial^*_t$.
By the perturbation theory \cite[\textsection 2.6, Theorem 6.1]{Kato95}
(cf. Theorem \ref{lab-thm-holomophic-eigenvalue-c-matrix} in the appendix),
there exist smooth functions $\lambda_1(t),\cdots,\lambda_m(t)$ such that
\begin{equation}
\big\{\lambda_1(t),\cdots,\lambda_m(t)\big\} = \Sp\big(D_t\big).
\end{equation}
For $A\in\mathrm{End}\big(W^\bullet\big)$,
we denote by $\big\lVert A \big\rVert_t$
the operator norm of $A$ with respect to $h^{W^\bullet}_t$.
We will show that
\begin{equation}
\left|\frac{\partial}{\partial t}\lambda_k(t)\right|
\leqslant \big\lVert Q_t\big\rVert_t \big|\lambda_k(t)\big|.
\end{equation}

For ease of notation,
we will write $\lambda(t)$ rather than $\lambda_k(t)$.
Let $P(t): W^\bullet \rightarrow W^\bullet$
be the projection corresponding to $\lambda(t)$
in Theorem \ref{lab-thm-holomophic-eigenvalue-c-matrix}.
We remark that $t\mapsto P(t)$ is smooth.
Take $w\in W^\bullet$ such that $P(0)w\neq 0$.
For $t$ close to $0$,
take
\begin{equation}
w(t) =  \Big(h_t^{W^\bullet}\big(P(t)w,P(t)w\big)\Big)^{-1/2} P(t)w \in W^\bullet.
\end{equation}
Then $t\mapsto w(t)$ is smooth.
Moreover,
we have
\begin{equation}
\label{eq1-pf-prop-variation}
D_t w(t) = \lambda(t)w(t),
\end{equation}
and
\begin{equation}
\label{eq2-pf-prop-variation}
h^{W^\bullet}_t\big(w(t),w(t)\big) = 1.
\end{equation}
As a consequence, we have
\begin{equation}
\label{eq3-pf-prop-variation}
\mathrm{Re} \, h^{W^\bullet}_t\Big(w(t),\frac{\partial}{\partial t}w(t)\Big) = 0.
\end{equation}

Taking the derivative of \eqref{eq1-pf-prop-variation},
we get
\begin{equation}
\label{eq4-pf-prop-variation}
\Big(\frac{\partial}{\partial t}D_t\Big) w(t) +
D_t \Big(\frac{\partial}{\partial t}w_t\Big) =
\Big(\frac{\partial}{\partial t}\lambda(t)\Big)w(t) +
\lambda(t)\Big(\frac{\partial}{\partial t}w(t)\Big).
\end{equation}
By \eqref{eq1-pf-prop-variation}-\eqref{eq4-pf-prop-variation},
we get
\begin{equation}
\label{eq5-pf-prop-variation}
\frac{\partial}{\partial t}\lambda(t) = \mathrm{Re} \,
h^{W^\bullet}_t\left(w(t),\Big(\frac{\partial}{\partial t}D_t\Big)w(t)\right).
\end{equation}

Let $\overline{W^\bullet}^*$ be the anti-dual of $W^\bullet$.
We may view $h^{W^\bullet}_t$ as a map $h^{W^\bullet}_t: W^\bullet \rightarrow \overline{W^\bullet}^*$,
defined as follows:
$h^{W^\bullet}_t(w,v) = h^{W^\bullet}_t(w)(v)$
for $w,v\in W^\bullet$.
Let $\overline{\partial}: \overline{W^\bullet}^*\rightarrow\overline{W^\bullet}^*$,
determined as follows:
$(\overline{\partial}\tau)(w) = \tau(\partial w)$
for $w\in W^\bullet$, $\tau\in\overline{W^\bullet}^*$.
Then
\begin{equation}
\label{eq-dstar}
\partial^*_t = \big(h^{W^\bullet}_t\big)^{-1} \overline{\partial} h^{W^\bullet}_t.
\end{equation}
Taking the derivative of \eqref{eq-dstar},
we get
\begin{equation}
\label{eq6-pf-prop-variation}
\frac{\partial}{\partial t} D_t =
\frac{\partial}{\partial t} \partial^*_t =
\partial^*_t Q_t
- Q_t \partial^*_t.
\end{equation}

By \eqref{eq5-pf-prop-variation}
and \eqref{eq6-pf-prop-variation},
we get
\begin{align}
\label{eq7-pf-prop-variation}
\begin{split}
\frac{\partial}{\partial t}\lambda(t)
& = \mathrm{Re} \, h^{W^\bullet}_t\Big(w(t),\partial^*_tQ_tw(t)\Big)
- \mathrm{Re} \, h^{W^\bullet}_t\Big(w(t),Q_t\partial^*_tw(t)\Big) \\
& = \mathrm{Re} \, h^{W^\bullet}_t\Big(\partial w(t),Q_tw(t)\Big)
- \mathrm{Re} \, h^{W^\bullet}_t\Big(Q_tw(t),\partial^*_tw(t)\Big) \\
& = \mathrm{Re} \, h^{W^\bullet}_t\Big(\big(\partial-\partial^*_t\big)w(t),Q_tw(t)\Big).
\end{split}
\end{align}
Using \eqref{eq7-pf-prop-variation}
and the fact that $\big(\partial-\partial^*_t\big)^2 = - \big(\partial+\partial^*_t\big)^2$,
we get
\begin{align}
\begin{split}
\left| \frac{\partial}{\partial t}\lambda(t) \right|^2
& \leqslant
h_t^{W^\bullet}\Big(\big(\partial-\partial^*_t\big)w(t),\big(\partial-\partial^*_t\big)w(t)\Big)
h_t^{W^\bullet}\Big(Q_tw(t),Q_tw(t)\Big) \\
& =
h_t^{W^\bullet}\Big(\big(\partial+\partial^*_t\big)w(t),\big(\partial+\partial^*_t\big)w(t)\Big)
h_t^{W^\bullet}\Big(Q_tw(t),Q_tw(t)\Big) \\
& \leqslant \big|\lambda(t)\big|^2\big\lVert Q_t \big\rVert_t^2.
\end{split}
\end{align}

The result proved in Step 1 also holds true for
the de Rham complex $\big(\Omega^\bullet(Z,F),d^F\big)$
equipped with $L^2$-metrics.

\noindent\textbf{Step 2. }
We prove \eqref{eq-thm-ch3-gap}.

For $\epsilon>0$ and $t\geqslant 0$,
we perturb $g^{TZ_R}$ on the cylindrical part $Y_{[-R,R]}$ as follows,
\begin{equation}
\label{eq1-pf-cor-variation}
\Big(g^{TZ_R}_{\epsilon,t}\Big)_{(u,y)} = \Big(g^{TZ_R}\Big)_{(u,y)}
+ \psi\Big(\frac{R-|u|}{\epsilon}\Big)\frac{2t \, du^2}{R},
\end{equation}
where $\psi$ is constructed in \eqref{eq-ch3-def-chi-1}.
Set
\begin{equation}
\label{eq2-pf-cor-variation}
R_{\epsilon,t} = \int_0^R \sqrt{1 + \psi\Big(\frac{R-u}{\epsilon}\Big)\frac{2t}{R}} \; du.
\end{equation}
Then $Z_R$ (viewed as a differential manifold)
equipped with the Riemannian metric $g^{TZ_R}_{\epsilon,t}$
is isometric to $Z_{R_{\epsilon,t}}$.
A direct calculation yields
\begin{equation}
\label{eq3-pf-cor-variation}
\lim_{\epsilon\rightarrow 0} \frac{\partial R_{\epsilon,t}}{\partial t}\Big|_{t=0} = 1.
\end{equation}

Let $h^{\Omega^\bullet(Z_R,F)}_{\epsilon,t}$ be the $L^2$-metric on $\Omega^\bullet(Z_R,F)$
induced by $g^{TZ_R}_{\epsilon,t}$ and $h^F$.
By \cite[Proposition 4.15]{BZ92},
we have
\begin{align}
\label{eq6-pf-cor-variation}
\begin{split}
& \left(\Big(h^{\Omega^\bullet(Z_R,F)}_{\epsilon,t}\Big)^{-1}
\frac{\partial}{\partial t}h^{\Omega^\bullet(Z_R,F)}_{\epsilon,t}\Big|_{t=0}\right)_{(u,y)} \\
& = - \psi\Big(\frac{1-|u|}{\epsilon}\Big) R^{-1} \cliffu\hatcliffu
 \in \mathrm{End}\Big(\Lambda^\bullet(T^*Z_R) \otimes F \Big)_{(u,y)}.
\end{split}
\end{align}
As a consequence,
\begin{equation}
\label{eq7-pf-cor-variation}
\left\lVert \Big(h^{\Omega^\bullet(Z_R,F)}_{\epsilon,t}\Big)^{-1}
\frac{\partial}{\partial t}h^{\Omega^\bullet(Z_R,F)}_{\epsilon,t}\Big|_{t=0}\right\rVert
\leqslant R^{-1}.
\end{equation}

We denote
\begin{equation}
\label{eqa1-pf-cor-variation}
\delta_R =  \min \Big\{ |\lambda| \;:\; 0\neq\lambda\in\Sp\big(D^F_{Z_R}\big) \Big\}.
\end{equation}
For each $R_0>0$,
we take $0<b<\delta_{R_0}<a$ such that
\begin{equation}
\label{eqa2-pf-cor-variation}
\big\{-a,a\big\} \cap \Sp\big(D^F_{Z_{R_0}}\big) = \emptyset, \quad
[-b,b] \cap \Sp\big(D^F_{Z_{R_0}}\big) \subseteq \{0\}.
\end{equation}
Since $\dim\Ker\big(D^F_{Z_R}\big)$ is constant,
\eqref{eqa2-pf-cor-variation} holds with $R_0$ replaced by $R$ close enough to $R_0$.
Applying Theorem \ref{lab-thm-holomophic-eigenvalue-c-matrix}
to the restriction of $D^F_{Z_R}$ to its eigenspace
associated with eigenvalues in $[-a,-b]\cup[b,a]$,
we get
\begin{equation}
\big([-a,-b]\cup[b,a]\big) \cap \Sp\big(D^F_{Z_R}\big) =
\big\{\lambda_1(R),\cdots,\lambda_m(R)\big\},
\end{equation}
where $\lambda_1(R),\cdots,\lambda_m(R)$ depends smoothly on $R$.
By Step 1 and \eqref{eq7-pf-cor-variation},
we have
\begin{equation}
\label{eq8-pf-cor-variation}
\frac{\partial}{\partial t} \big|\lambda_k(R_{\epsilon,t})\big| \Big|_{t=0}
\geqslant - R^{-1} \big|\lambda_k(R)\big|, \quad
\text{for } k=1,\cdots,m.
\end{equation}
Taking $\epsilon\rightarrow 0$ in \eqref{eq8-pf-cor-variation}
and applying \eqref{eq3-pf-cor-variation},
we get
\begin{equation}
\label{eq9-pf-cor-variation}
\frac{\partial}{\partial R} \big|\lambda_k(R)\big|
\geqslant -R^{-1} \big|\lambda_k(R)\big|.
\end{equation}
As a consequence,
the function $R\mapsto R\big|\lambda_k(R)\big|$ is non decreasing.
Then so is the function $R\mapsto R\delta_R$.
This completes the proof of Theorem \ref{thm-ch3-gap}.
\end{proof}

For $j=1,2$,
let $D^F_{Z_{j,\infty},\mathrm{pp}}$ be the restriction of $D^F_{Z_{j,\infty}}$
to the eigenspace associated with the purely point spectrum (cf. \textsection \ref{ch2-2}).
We fix $\delta_{Z_j}>0$ such that
\begin{equation}
\Sp\big(D^F_{Z_{j,\infty},\mathrm{pp}}\big) \cap [-\delta_{Z_j},\delta_{Z_j}] \subseteq \{0\}.
\end{equation}
For $j=1,2$,
we fix $\delta_{C_j}>0$ such that
for $0<a\leqslant \delta_{C_j}$,
the Fourier expansion
\begin{equation}
\label{eq-ch3-def-deltaC}
C_j(\lambda)\big|_{\lambda\in[-a,a]}
= \sum_{k\in\Z} \exp\big( i \pi a^{-1} k\lambda \big) C_{j,a,k}
\end{equation}
satisfies
\begin{equation}
\label{eq-ch3-def-deltaC-2}
\big\lVert C_{j,a,0} - \mathrm{Id} \big\rVert \leqslant \frac{1}{3},\hspace{5mm}
\big\lVert C_{j,a,k} \big\rVert \leqslant \frac{1}{3},\hspace{5mm}\text{for } k \neq 0,
\end{equation}
where $\big\lVert\cdot\big\rVert$ is the operator norm
on $\mathrm{End}\big(\hh(Y,F[du])\big)$ with respect to $\big\lVert\cdot\big\rVert_Y$.
Set
\begin{equation}
\label{eq-ch3-def-delta}
\delta = \frac{1}{2}\mathrm{min}\big\{\delta_Y,\delta_{Z_1},\delta_{Z_2},\delta_{C_1},\delta_{C_2}\big\}.
\end{equation}
Here we indicate the usage of each term in \eqref{eq-ch3-def-delta}.
In the proof of Proposition \ref{prop-ch3-s-eigen-proj-inj},
we only use the fact that $\delta \leqslant \delta_Y/2$.
In the proof of Lemma \ref{lem-ch3-zm},
we use $\delta \leqslant \frac{1}{2}\mathrm{min}\{ \delta_Y, \delta_{Z_1}, \delta_{Z_2} \}$.
In the proof of Lemma \ref{lem-ch3-zm-L2},
we use the whole construction of $\delta$.

For $A\subseteq(-\delta_Y,0)\cup(0,\delta_Y)$,
set
\begin{equation}
\label{eq-ch3-def-Einf-AR}
\mathscr{E}_{A,R}(Z_{12,\infty},F) =
\bigoplus_{\lambda\in A} \mathscr{E}_{\lambda,R}(Z_{12,\infty},F).
\end{equation}
We construct
\begin{equation}
J_{A, Z_R} : \; \mathscr{E}_{A,R}(Z_{12,\infty},F) \rightarrow \Omega^\bullet(Z_R,F)
\end{equation}
as follows:
under the identifications \eqref{eq-ch3-YR} and \eqref{eq-ch3-def-chi-2},
\begin{align}
\label{eq-ch3-def-J}
\begin{split}
J_{A, Z_R}(\omega_1, \omega_1^\mathrm{zm}, \omega_2, \omega_2^\mathrm{zm}) \big|_{Z_{j,0}}
& = \omega_j ,\hspace{5mm} \text{ for } j=1,2 , \\
J_{A, Z_R}(\omega_1, \omega_1^\mathrm{zm}, \omega_2, \omega_2^\mathrm{zm}) \big|_{Y_{[-R,R]}}
& = \chi_{1,R} \; \omega_1 \big|_{Y_{[0,2R]}}
+ \chi_{2,R} \; \omega_2 \big|_{Y_{[-2R,0]}} \\
& \hspace{5mm} + \big(1-\chi_{1,R} - \chi_{2,R}\big) \omega_1^\mathrm{zm} \big|_{Y_{[0,2R]}}.
\end{split}
\end{align}

For $B\subseteq\R$,
we denote by $\mathscr{E}_{B}(Z_R,F) \subseteq \Omega^\bullet(Z_R, F)$
the eigenspaces of $D^F_{Z_R}$ associated with eigenvalues in $B$.
Let $P^B_{Z_R} : \Omega^\bullet(Z_R,F) \rightarrow \mathscr{E}_B(Z_R,F)$
be the orthogonal projection.
Set
\begin{equation}
\label{eq-ch3-def-mathscrJ}
\mathscr{J}_{A, B, Z_R} = P^B_{Z_R} \circ J_{A, Z_R} : \;
\mathscr{E}_{A,R}(Z_{12,\infty},F) \rightarrow \mathscr{E}_B(Z_R,F).
\end{equation}

For $A, B \subseteq \R$ and $\alpha>0$,
we denote $A\subseteq_\alpha B$, if $(x-\alpha,x+\alpha) \subseteq B$ for each $x\in A$.

\begin{prop}
\label{prop-ch3-s-eigen-proj-inj}
There exists $a>0$ such that
for $A\subseteq_{e^{-aR}} B \subseteq (-\delta,0)\cup(0,\delta)$
and $(\omega_1,\omega_1^\mathrm{zm},\omega_2,\omega_2^\mathrm{zm})
\in\mathscr{E}_{A,R}(Z_{12,\infty},F)$,
we have
\begin{equation}
\label{eq-prop-ch3-s-eigen-proj-inj}
\big\lVert \big(\mathscr{J}_{A,B,Z_R}-J_{A,Z_R}\big)
(\omega_1,\omega_1^\mathrm{zm},\omega_2, \omega_2^\mathrm{zm}) \big\rVert_{\continu\!,Z_R}
= \mathscr{O}\big(e^{-aR}\big)
\big( \big\lVert \omega_1 \big\rVert_{Z_{1,0}} +
\big\lVert \omega_2 \big\rVert_{Z_{2,0}} \big).
\end{equation}
As a consequence, $\mathscr{J}_{A, B, Z_R}$ is injective.
\end{prop}
\begin{proof}
For $\lambda_0\in A\subseteq(-\delta,0)\cup(0,\delta)$ and
$(\omega_1, \omega_1^\mathrm{zm}, \omega_2, \omega_2^\mathrm{zm})
\in\mathscr{E}^\bullet_{\lambda_0,R}(Z_{12,\infty},F)$,
similarly to \eqref{eq4-pf-prop-ch3-approx-harmonic-tot-inj},
we could show that
for any $m\in\N$,
there exists $a>0$ such that
\begin{equation}
\label{eq1-pf-prop-ch3-s-eigen-proj-inj}
\big\lVert D^{F,m}_{Z_R}\big(D^F_{Z_R}-\lambda_0\big) J_{A,Z_R}
(\omega_1,\omega_1^\mathrm{zm},\omega_2,\omega_2^\mathrm{zm}) \big\rVert_{Z_R}
= \mathscr{O}\big(e^{-2aR}\big)
\big( \big\lVert \omega_1 \big\rVert_{Z_{1,R}} +
\big\lVert \omega_2 \big\rVert_{Z_{2,R}} \big).
\end{equation}
On the other hand,
for $B\subseteq\R$ satisfying $\big\{\lambda_0\big\}\subseteq_{e^{-aR}}B$,
we have
\begin{align}
\label{eq2-pf-prop-ch3-s-eigen-proj-inj}
\begin{split}
\big\lVert D^{F,m}_{Z_R} \big(J_{A,Z_R}-\mathscr{J}_{A,B,Z_R}\big) (\cdot) \big\rVert_{Z_R}
& =
\big\lVert D^{F,m}_{Z_R} \big(\mathrm{Id}-P^B_{Z_R}\big) J_{A,Z_R}(\cdot) \big\rVert_{Z_R} \\
& \leqslant e^{aR}
\big\lVert D^{F,m}_{Z_R} \big(D^F_{Z_R}-\lambda_0\big) J_{A,Z_R} (\cdot) \big\rVert_{Z_R}.
\end{split}
\end{align}
Now \eqref{eq-prop-ch3-s-eigen-proj-inj} follows from
Proposition \ref{prop-ch3-sobolev},
\eqref{eq1-pf-prop-ch3-s-eigen-proj-inj} and
\eqref{eq2-pf-prop-ch3-s-eigen-proj-inj}.
The proof of Proposition \ref{prop-ch3-s-eigen-proj-inj} is completed.
\end{proof}

\begin{lemma}
\label{lem-ch3-zm}
There exists $a>0$ such that
for $R>2$ large enough and
$\omega\in\Omega^\bullet(Z_R,F)$
an eigensection of $D^F_{Z_R}$ associated with $\lambda\in(-\delta,0)\cup(0,\delta)$,
we have
\begin{equation}
\label{eq-lem-ch3-zm}
\big\lVert \omega^\mathrm{zm} \big\rVert_Y
\geqslant a \big\lVert \omega \big\rVert_{Z_{1,0} \cup Z_{2,0}}.
\end{equation}
In particular,
$\omega^\mathrm{zm}$ is non zero.
\end{lemma}
\begin{proof}
Suppose, on the contrary,
that there exist
$R_i\rightarrow+\infty$,
$\omega_i\in\Omega^\bullet(Z_{R_i},F)$
and $\lambda_i\in(-\delta,0)\cup(0,\delta)$
such that
\begin{equation}
\label{eq1-pf-lem-ch3-zm}
D^F_{Z_{R_i}} \omega_i = \lambda_i \omega_i ,
\end{equation}
and
\begin{equation}
\label{eq2-pf-lem-ch3-zm}
\big\lVert \omega_i \big\rVert_{Z_{1,0}\cup Z_{2,0}} = 1, \hspace{5mm}
\big\lVert \omega_i^\mathrm{zm} \big\rVert_Y \rightarrow 0, \quad
\text{as } i \rightarrow +\infty.
\end{equation}
We may assume that $\lambda_i\rightarrow\lambda_\infty$.

Proceeding in the same way as in the proof of
Theorem \ref{thm-ch3-approx-harmonic-tot},
we may assume that there exist
$\omega_{1,\infty}\in\Omega^\bullet(Z_{1,\infty},F)$ and $\omega_{2,\infty}\in\Omega^\bullet(Z_{2,\infty},F)$
such that for any $T>0$,
$\big(\omega_i\big|_{Z_{j,T}}\big)_i$ $H^1$-converges to
$\omega_{j,\infty}\big|_{Z_{j,T}}$ ($j=1,2$).
Moreover,
\begin{equation}
\label{eq3-pf-lem-ch3-zm}
\omega_{1,\infty}^+ = 0, \quad
\omega_{2,\infty}^- = 0.
\end{equation}
Now, taking $i\rightarrow+\infty$ in
\eqref{eq1-pf-lem-ch3-zm} and
\eqref{eq2-pf-lem-ch3-zm},
we get
\begin{equation}
\label{eq4-pf-lem-ch3-zm}
\big\lVert\omega_{1,\infty}\big\rVert^2_{Z_{1,0}}  +
\big\lVert\omega_{2,\infty}\big\rVert^2_{Z_{2,0}} = 1, \quad
D^F_{Z_{j,\infty}}\omega_{j,\infty} = \lambda_\infty\omega_{j,\infty}, \quad
\omega_{j,\infty}^\mathrm{zm} = 0, \quad \text{for } j=1,2.
\end{equation}
Without loss of generality,
we assume that $\omega_{1,\infty}\neq 0$.
By Lemma \ref{lem-ch2-zm-nz},
\eqref{eq3-pf-lem-ch3-zm} and
\eqref{eq4-pf-lem-ch3-zm},
$\omega_{1,\infty}$ is a $L^2$-eigensection of $D^F_{Z_{1,\infty}}$.
Then we have $\lambda_\infty\in\Sp\big(D^F_{Z_{1,\infty},\mathrm{pp}}\big)$.
Since
$|\lambda_\infty| \leqslant \delta < \delta_{Z_1}$,
which is the lower bound for the non zero elements in $\Sp\big(D^F_{Z_{1,\infty},\mathrm{pp}}\big)$,
we must have $\lambda_\infty=0$.
Thus
\begin{equation}
\omega_{1,\infty}\in\hhl(Z_{1,\infty},F).
\end{equation}

Proceeding in the same way as in the proof of
Theorem \ref{thm-ch3-approx-harmonic-tot},
we could show that
\begin{equation}
\big\langle \omega_i,\mathscr{F}_{Z_{R_i}}(\omega_{1,\infty},0,0) \big\rangle_{Z_{R_i}}
\rightarrow \big\lVert \omega_{1,\infty} \big\rVert^2_{Z_{1,\infty}} \neq 0,
\quad \text{as } i \rightarrow+\infty.
\end{equation}
However,
since $\omega_i$ is an eigensection of $D^F_{Z_{R_i}}$ associated with $\lambda_i\neq 0$
while
$\mathscr{F}_{Z_{R_i}}(\omega_{1,\infty},0,0) \in \Ker\big(D^F_{Z_{R_i}}\big)$,
we should have
$\big\langle \omega_i,\mathscr{F}_{Z_{R_i}}(\omega_{1,\infty},0,0) \big\rangle_{Z_{R_i}} = 0$.
Here we get a contradiction.
The proof of Lemma \ref{lem-ch3-zm} is completed.
\end{proof}

\begin{lemma}
\label{lem-ch3-zm-L2}
There exists $a>0$ such that
for $R>2$ large enough and
$\omega\in\Omega^\bullet(Z_R,F)$
sum of several eigensections of $D^F_{Z_R}$ associated with eigenvalues in $(-\delta,0)\cup(0,\delta)$,
we have
\begin{equation}
\label{eq1-lem-ch3-zm-L2}
\big\lVert \omega^\mathrm{zm} \big\rVert_{Y_{[-R,R]}}
\geqslant a \big\lVert \omega \big\rVert_{Z_{1,0} \cup Z_{2,0}}.
\end{equation}
As a consequence,
\begin{equation}
\label{eq2-lem-ch3-zm-L2}
\big\lVert \omega^\mathrm{zm} \big\rVert_{Y_{[-R,R]}}
\geqslant a \big\lVert \omega \big\rVert_{Z_R}.
\end{equation}
\end{lemma}
\begin{proof}
Suppose, on the contrary,
that there exist
$R_i\rightarrow+\infty$,
and $\omega_i\in\Omega^\bullet(Z_{R_i},F)$
such that
$\omega_i$ is sum of eigensections of $D^F_{Z_{R_i}}$
associated with eigenvalues in $(-\delta,0)\cup(0,\delta)$
and
\begin{equation}
\label{eq3-pf-lem-ch3-zm-L2}
\big\lVert \omega_i \big\rVert_{Z_{1,0}\cup Z_{2,0}} = 1, \hspace{5mm}
\big\lVert \omega_i^\mathrm{zm} \big\rVert_{Y_{[-R_i,R_i]}} \rightarrow 0, \quad
\text{as } i \rightarrow+\infty.
\end{equation}

Proceeding in the same way as in the proof of Lemma \ref{lem-ch3-zm},
we obtain $\omega_{j,\infty}\in\Omega^\bullet(Z_{j,\infty},F)$ ($j=1,2$)
satisfying the same properties except for
the second identity in \eqref{eq4-pf-lem-ch3-zm}.
Instead, we know that
$\omega_{j,\infty}$ only consists of (generalized) eigensections
of $D^F_{Z_{j,\infty}}$ associated with (generalized) eigenvalues in $[-\delta,\delta]$.

Without loss of generality,
we assume that $\omega_{1,\infty}\neq 0$.
Let
\begin{equation}
\label{eq4-pf-lem-ch3-zm-L2}
\omega_{1,\infty} = \omega_{1,\infty}^\mathrm{pp} + \omega_{1,\infty}^\mathrm{ac}
\end{equation}
be the decomposition with respect to \eqref{eq-ch2-pp-ac}.
Proceeding in the same way as in the proof of Lemma \ref{lem-ch3-zm},
we have $\omega_{1,\infty}^\mathrm{pp}\in\hhl(Z_{1,\infty},F)$.
In particular,
the zeromode of $\omega_{1,\infty}^\mathrm{pp}$ vanishes.
On the other hand,
by \eqref{eq4-pf-lem-ch3-zm},
the zeromode of $\omega_{1,\infty}$ vanishes.
Then so does $\omega_{1,\infty}^\mathrm{ac}$.
For $\phi\in\hh(Y,F)$ and $\lambda\in[-\delta,\delta]$,
we denote by $E(\phi,\lambda)$ be the generalized eigensection of $D^F_{Z_{1,\infty}}$
associated with $\lambda$
satisfying Proposition \ref{prop-ch2-g-eigen-Dinf}.
There exists a family
$\big(\phi_\lambda\big)_{\lambda\in[-\delta,\delta]}$
such that
\begin{equation}
\label{eq5-pf-lem-ch3-zm-L2}
\omega_{1,\infty}^\mathrm{ac} =
\int_{-\delta}^\delta E(\phi_\lambda,\lambda) d\lambda.
\end{equation}
Taking the zeromode of $\omega_{1,\infty}^\mathrm{ac}$
and applying Proposition \ref{prop-ch2-g-eigen-Dinf},
we get
\begin{align}
\label{eq6-pf-lem-ch3-zm-L2}
\begin{split}
0 = \big(\omega_{1,\infty}^\mathrm{ac}\big)^{\mathrm{zm},-}
= \big(1-i\cliffu\big) \int_{-\delta}^\delta e^{-i\lambda u_1}  \phi_\lambda d\lambda, \\
0 = \big(\omega_{1,\infty}^\mathrm{ac}\big)^{\mathrm{zm},+}
= \big(1+i\cliffu\big) \int_{-\delta}^\delta e^{i\lambda u_1}  C_1(\lambda)\phi_\lambda d\lambda.
\end{split}
\end{align}
Since $\phi_\lambda\in\hh(Y,F)$
and $C_1(\lambda)\phi_\lambda\in\hh(Y,F)$
(see Proposition \ref{prop-ch2-C-lambda}),
we have
\begin{equation}
\label{eq7-pf-lem-ch3-zm-L2}
\int_{-\delta}^\delta e^{-i\lambda u_1}  \phi_\lambda d\lambda =
\int_{-\delta}^\delta e^{i\lambda u_1}  C_1(\lambda)\phi_\lambda d\lambda = 0.
\end{equation}
We remark that $u_1\geqslant 0$.
Then, by \eqref{eq7-pf-lem-ch3-zm-L2},
the Fourier expansion of
$\big(\phi_\lambda\big)_{\lambda\in[-\delta,\delta]}$
(resp. $\big(C_1(\lambda)\phi_\lambda\big)_{\lambda\in[-\delta,\delta]}$)
only possesses strictly negative (resp. strictly positive) frequency.
Now, applying \eqref{eq-ch3-def-deltaC} and \eqref{eq-ch3-def-deltaC-2},
we see that $\phi_\lambda=0$ for $\lambda\in[-\delta,\delta]$.
Thus $\omega_{1,\infty}^\mathrm{ac}=0$
and $\omega_{1,\infty} = \omega_{1,\infty}^\mathrm{pp}\in\hhl(Z_{1,\infty},F)$.
This leads to the same contradiction as in the proof of Lemma \ref{lem-ch3-zm}.

The inequality \eqref{eq2-lem-ch3-zm-L2}
is a consequence of
Lemma \ref{lem-ch2-zm-nz},
\eqref{eq1-lem-ch3-zm-L2}
and the obvious identity
\begin{equation}
\big\lVert \omega \big\rVert_{Z_R}^2 =
\big\lVert \omega \big\rVert_{Z_{1,0}\cup Z_{2,0}}^2 +
\big\lVert \omega^\mathrm{zm} \big\rVert_{Y_{[-R,R]}}^2 +
\big\lVert \omega^\mathrm{nz} \big\rVert_{Y_{[-R,R]}}^2.
\end{equation}
This completes the proof of Lemma \ref{lem-ch3-zm-L2}.
\end{proof}

\begin{lemma}
\label{lem-ch3-cut}
There exists $a>0$ such that
for $\omega\in\Omega^\bullet(Z_R,F)$
an eigensection of $D^F_{Z_R}$ associated with $\lambda\in(-\delta,0)\cup(0,\delta)$,
we have
\begin{equation}
\label{eq1-lem-ch3-cut}
\Big\lVert C_j(\lambda)\omega^{\mathrm{zm},-}\big|_{\partial Z_{j,0}} - \omega^{\mathrm{zm},+}\big|_{\partial Z_{j,0}} \Big\rVert_Y
= \mathscr{O}\big(e^{-2aR}\big)
\big\lVert \omega \big\rVert_{Z_{1,0}\cup Z_{2,0}},
\quad \text{for } j=1,2.
\end{equation}
In particular,
\begin{equation}
\label{eq2-lem-ch3-cut}
\Big\lVert \big(e^{4i\lambda R}C_{12}(\lambda)-\mathrm{Id}\big)
\omega^{\mathrm{zm},-}\big|_{\partial Z_{1,0}} \Big\rVert_Y
= \mathscr{O}\big(e^{-2aR}\big)
\big\lVert \omega \big\rVert_{Z_{1,0}\cup Z_{2,0}}.
\end{equation}
\end{lemma}
\begin{proof}
We will follow the argument in \cite[$\S$ 8]{Muller94}.
Here we only consider $j=1$.

By \eqref{eq-ch2-eigen-expansion}-\eqref{eq-ch2-decomp-zm-nz},
on $Y_{[-R,R]}\subseteq Z_R$,
there exist $\phi_1,\phi'_1\in\hh(Y,F[du])$ such that
\begin{equation}
\label{eq0-pf-lem-ch3-cut}
\omega\big|_{Y_{[-R,R]}} =
e^{-i\lambda u}\phi_1 + e^{i\lambda u}\phi'_1 + \omega^\mathrm{nz}.
\end{equation}
Under the identification \eqref{eq-ch3-YR},
on $Y_{[0,2R]}$,
we have
\begin{equation}
\label{eq1-pf-lem-ch3-cut}
\omega\big|_{Y_{[0,2R]}} =
e^{-i\lambda u_1} e^{i\lambda R} \phi_1
+ e^{i\lambda u_1} e^{-i\lambda R} \phi'_1 + \omega^\mathrm{nz}.
\end{equation}
Thus
\begin{equation}
\label{eq2-pf-lem-ch3-cut}
\omega^{\mathrm{zm},-}\big|_{\partial Z_{1,0}} = e^{i\lambda R}\phi_1 =: \phi, \quad
\omega^{\mathrm{zm},+}\big|_{\partial Z_{1,0}} = e^{-i\lambda R}\phi'_1 = : \phi'.
\end{equation}
By Proposition \ref{prop-ch2-g-eigen-Dinf},
there exists
$(\widetilde{\omega},{\widetilde{\omega}}^\mathrm{zm})\in\mathscr{E}_\lambda(Z_{1,\infty},F)$
(cf. \eqref{eq-ch3-def-Einf})
such that
\begin{equation}
\label{eq3-pf-lem-ch3-cut}
\widetilde{\omega}\big|_{Y_{[0,R]}} =
e^{-i\lambda u_1}\phi + e^{i\lambda u_1}C_1(\lambda)\phi' + \widetilde{\omega}^\mathrm{nz}.
\end{equation}
Set
$\mu = \omega\big|_{Z_{1,R}} - \tilde{\omega}\big|_{Z_{1,R}}\in\Omega^\bullet(Z_{1,R},F)$.
We have
\begin{equation}
\label{eq4-pf-lem-ch3-cut}
\big\langle D^F_{Z_{1,R}}\mu,\mu \big\rangle_{Z_{1,R}} -
\big\langle \mu,D^F_{Z_{1,R}}\mu \big\rangle_{Z_{1,R}}
= \big\langle \lambda \mu,\mu \big\rangle_{Z_{1,R}} -
\big\langle \mu,\lambda\mu \big\rangle_{Z_{1,R}}
= 0 .
\end{equation}
On the other hand,
note that $\partial Z_{1,R} = Y_R$,
by \eqref{eq-ch2-green} and
\eqref{eq1-pf-lem-ch3-cut}-\eqref{eq3-pf-lem-ch3-cut},
we have
\begin{align}
\label{eq5-pf-lem-ch3-cut}
\begin{split}
& \big\langle D^F_{Z_{1,R}}\mu,\mu \big\rangle_{Z_{1,R}}
- \big\langle \mu,D^F_{Z_{1,R}}\mu \big\rangle_{Z_{1,R}}
= \big\langle \cliffu \mu,\mu \big\rangle_{\partial Z_{1,R}} \\
& = - i\Big\lVert C_1(\lambda)\omega^{\mathrm{zm},-}\big|_{\partial Z_{1,0}}
- \omega^{\mathrm{zm},+}\big|_{\partial Z_{1,0}} \Big\rVert^2_Y
+  \big\langle \cliffu \mu^\mathrm{nz},\mu^\mathrm{nz} \big\rangle_{\partial Z_{1,R}}.
\end{split}
\end{align}
We remark that the last equality in \eqref{eq5-pf-lem-ch3-cut} needs the identities
\begin{equation}
\cliffu \phi'_1 = - i \phi'_1, \quad
\cliffu C(\lambda) \phi'_1 = i C(\lambda) \phi'_1,
\end{equation}
which come from \eqref{eq-ch2-def-zm-nz-1} and \eqref{eq-ch2-zm-norm}.
By \eqref{eq4-pf-lem-ch3-cut}
and \eqref{eq5-pf-lem-ch3-cut},
we get
\begin{equation}
\label{eq6-pf-lem-ch3-cut}
\Big\lVert C_1(\lambda)\omega^{\mathrm{zm},-}|_{\partial Z_{1,0}}
- \omega^{\mathrm{zm},+}|_{\partial Z_{1,0}} \Big\rVert_Y
\leqslant  \big\lVert \mu^\mathrm{nz} \big\rVert_{\partial Z_{1,R}}
\leqslant \big\lVert \omega^\mathrm{nz} \big\rVert_{\partial Z_{1,R}}
+ \big\lVert \widetilde{\omega}^\mathrm{nz} \big\rVert_{\partial Z_{1,R}}.
\end{equation}

By \eqref{eq-ch2-nz-decreasing},
we have
\begin{equation}
\label{eq7-pf-lem-ch3-cut}
\big\lVert \omega^\mathrm{nz} \big\rVert_{\partial Z_{1,R}}
= \mathscr{O}\big(e^{-\delta R}\big)
\big\lVert \omega \big\rVert_{\partial Z_{1,0} \cup \partial Z_{2,0}}, \quad
\big\lVert \widetilde{\omega}^\mathrm{nz} \big\rVert_{\partial Z_{1,R}}
=  \mathscr{O}\big(e^{-\delta R}\big)
\big\lVert \widetilde{\omega} \big\rVert_{\partial Z_{1,0}}.
\end{equation}
Proceeding in the same way as in \eqref{eq5-pf-prop-ch3-approx-f-g-total},
we get
\begin{equation}
\label{eq8-pf-lem-ch3-cut}
\big\lVert \omega \big\rVert_{\partial Z_{1,0} \cup \partial Z_{2,0}}
= \mathscr{O}(1) \big\lVert \omega \big\rVert_{Z_{1,0} \cup Z_{2,0}}, \quad
\big\lVert \widetilde{\omega} \big\rVert_{\partial Z_{1,0}}
= \mathscr{O}(1) \big\lVert \widetilde{\omega} \big\rVert_{Z_{1,0}}.
\end{equation}
By \eqref{eq-rem-ch2-g-eigen},
\eqref{eq2-pf-lem-ch3-cut} and
\eqref{eq3-pf-lem-ch3-cut},
we have
\begin{equation}
\label{eq10-pf-lem-ch3-cut}
\big\lVert \widetilde{\omega} \big\rVert_{Z_{1,0}}
= \mathscr{O}(1) \big\lVert \phi \big\rVert_Y
= \mathscr{O}(1) \big\lVert \omega \big\rVert_{\partial Z_{1,0}}.
\end{equation}
By \eqref{eq6-pf-lem-ch3-cut}-\eqref{eq10-pf-lem-ch3-cut},
we obtain \eqref{eq1-lem-ch3-cut}.

Using \eqref{eq1-lem-ch3-cut}
and proceeding in the same way as in the proof of Lemma \ref{lem-ch3-LambdaR},
we get \eqref{eq2-lem-ch3-cut}.
The proof of Lemma \ref{lem-ch3-cut} is completed.
\end{proof}

\begin{thm}
\label{thm-ch3-s-eigen-proj}
There exists $a>0$ such that
for $A,B\subseteq\R$ satisfying
\begin{equation}
\label{eq-thm-ch3-s-eigen-proj}
\mathrm{Sp}\big(D^F_{Z_R}\big) \cap B
\subseteq_{e^{-aR}} A
\subseteq_{e^{-aR}} B
\subseteq (-\delta,0)\cup(0,\delta),
\end{equation}
the map
$\mathscr{J}_{A, B, Z_R}:
\mathscr{E}_{A,R}(Z_{12,\infty},F) \rightarrow \mathscr{E}_B(Z_R,F)$
is bijective.
\end{thm}
\begin{proof}
The proof consists of several steps.

\noindent\textbf{Step 1. }
We prove the following result.
For $\lambda_0\in(-\delta,\delta)$
and $v\in \hh(Y,F)$,
if
\begin{equation}
\epsilon := \Big\lVert e^{4iR\lambda_0}C_{12}(\lambda_0)v - v \Big\rVert_Y
< \big\lVert v \big\rVert_Y,
\end{equation}
then there exist an orthogonal decomposition
$v = v_1+\cdots+v_m$,
$\lambda_1,\cdots,\lambda_m\in\R$ and
$w_1,\cdots,w_m\in \hh(Y,F)$ satisfying
\begin{align}
\label{eq0-pf-thm-ch3-s-eigen-proj}
\begin{split}
& e^{4iR\lambda_j}C_{12}(\lambda_j)w_j - w_j = 0, \\
& \big|\lambda_j-\lambda_0\big| < \epsilon^{1/2} \big\lVert v \big\rVert_Y^{-1/2}, \quad
\big\lVert v_j - w_j \big\rVert_Y < \epsilon^{1/2} \big\lVert v \big\rVert_Y^{1/2}.
\end{split}
\end{align}

Let $\theta_1(\lambda),\cdots,\theta_m(\lambda)\in\R$ and $P_1(\lambda),\cdots,P_m(\lambda)\in\mathrm{End}\big(\hh(Y,F)\big)$
such that \eqref{eq-thm-app-C} holds with $C$ replaced by $C_{12}$.
Set $v_j = P_j(\lambda_0)v$.
Then $v=v_1+\cdots+v_m$ is an orthogonal decomposition.
Moreover,
we have
\begin{equation}
\label{eq01-pf-thm-ch3-s-eigen-proj}
\epsilon^2= \Big\lVert e^{4iR\lambda_0}C_{12}(\lambda_0)v - v \Big\rVert_Y^2
= \sum_{j=1}^m \Big| e^{4iR\lambda_0+i\theta_j(\lambda_0)} - 1 \Big|^2 \big\lVert v_j \big\rVert_Y^2.
\end{equation}
If $\big\lVert v_j \big\rVert_Y^2 < \epsilon \big\lVert v \big\rVert_Y$,
set $w_j = 0$ and $\lambda_j = \lambda_0$.
Then \eqref{eq0-pf-thm-ch3-s-eigen-proj} holds trivially.
Otherwise,
by \eqref{eq01-pf-thm-ch3-s-eigen-proj}
we have
\begin{equation}
\label{eq02-pf-thm-ch3-s-eigen-proj}
\Big| e^{4iR\lambda_0+i\theta_j(\lambda_0)} - 1 \Big|^2 \leqslant
\epsilon \big\lVert v \big\rVert_Y^{-1}  < 1.
\end{equation}
Using the inequality $\big| e^{ix}-1 \big|^2 > x^2/4$ for $|x|<\pi/3$
and \eqref{eq02-pf-thm-ch3-s-eigen-proj},
we know that there exists $k_j\in\Z$ such that
\begin{equation}
\label{eq03-pf-thm-ch3-s-eigen-proj}
\big| 4R\lambda_0 + \theta_j(\lambda_0) -2k_j\pi \big| \leqslant
2 \epsilon^{1/2} \big\lVert v \big\rVert_Y^{-1/2}.
\end{equation}
For $R$ large enough,
the derivative of the function
$[-3\delta/2,3\delta/2] \ni \lambda \mapsto 4R\lambda + \theta_j(\lambda)$
is greater than $R$.
Thus,
by \eqref{eq03-pf-thm-ch3-s-eigen-proj},
there exists a unique $\lambda_j\in\R$ such that
$4R\lambda_j + \theta_j(\lambda_j) = 2k_j\pi$,
and
\begin{equation}
\label{eq04-pf-thm-ch3-s-eigen-proj}
\big|\lambda_j - \lambda_0\big| <
2 R^{-1} \epsilon^{1/2} \big\lVert v \big\rVert_Y^{-1/2}
\leqslant R^{-1}\big| 4R\lambda_0 + \theta_j(0) - 2k_j\pi \big|.
\end{equation}
Set $w_j = P(\lambda_j)v$.
We have
\begin{equation}
\label{eq05-pf-thm-ch3-s-eigen-proj}
\big\lVert v_j - w_j \big\rVert_Y =
\Big\lVert \big( P_j(\lambda_0) - P_j(\lambda_j) \big) v \Big\rVert_Y =
\mathscr{O}(1) \big| \lambda_0 - \lambda_j \big| \big\lVert v \big\rVert_Y.
\end{equation}
By \eqref{eq04-pf-thm-ch3-s-eigen-proj}
\eqref{eq05-pf-thm-ch3-s-eigen-proj} and
\eqref{eq-thm-app-C},
we get \eqref{eq0-pf-thm-ch3-s-eigen-proj}.

\noindent\textbf{Step 2. }
We show that
for $\omega\in\Omega^\bullet(Z_R,F)$
an eigensection of $D^F_{Z_R}$ associated with $\lambda_0\in B$,
there exists $\mu\in\Omega^\bullet(Z_R,F)$
lying in the image of $\mathscr{J}_{A, B, Z_R}$
such that
\begin{equation}
\label{eq1-pf-thm-ch3-s-eigen-proj}
\big\lVert \omega - \mu \big\rVert_{Z_R}
= \mathscr{O}\big(e^{-aR}\big)
\big\lVert \omega \big\rVert_{Z_R}.
\end{equation}

By \eqref{eq-ch2-zm-norm},
\eqref{eq2-pf-lem-ch3-cut}, \eqref{eq3-pf-lem-ch3-cut}
and \eqref{eq8-pf-lem-ch3-cut},
we have
\begin{equation}
\label{eq12-pf-thm-ch3-s-eigen-proj}
\big\lVert \omega^\mathrm{zm} \big\rVert_Y
\leqslant \big\lVert \omega \big\rVert_{\partial Z_{1,0}}
= \mathscr{O}(1) \big \lVert \omega \big\rVert_{Z_{1,0}\cup Z_{2,0}}.
\end{equation}
On the other hand,
by Lemma \ref{lem-ch3-zm},
we have
\begin{equation}
\label{eq13-pf-thm-ch3-s-eigen-proj}
\big \lVert \omega \big\rVert_{Z_{1,0}\cup Z_{2,0}}
= \mathscr{O}(1) \big\lVert \omega^\mathrm{zm} \big\rVert_Y.
\end{equation}
Since $C_1(\lambda)$ and $C_2(\lambda)$ are unitary,
by Lemma \ref{lem-ch3-cut},
\eqref{eq12-pf-thm-ch3-s-eigen-proj} and
\eqref{eq13-pf-thm-ch3-s-eigen-proj},
we have
\begin{equation}
\label{eq14-pf-thm-ch3-s-eigen-proj}
\big\lVert \omega^{\mathrm{zm},\pm} \big\rVert_Y
= \mathscr{O}(1) \big \lVert \omega \big\rVert_{Z_{1,0}\cup Z_{2,0}}, \quad
\big \lVert \omega \big\rVert_{Z_{1,0}\cup Z_{2,0}}
= \mathscr{O}(1) \big\lVert \omega^{\mathrm{zm},\pm} \big\rVert_Y.
\end{equation}

By Lemma \ref{lem-ch3-cut} and Step 1,
there exist integer $m\leqslant \dim \hh(Y,F)$ and
\begin{equation}
\label{eq15-pf-thm-ch3-s-eigen-proj}
\lambda_j\in(\lambda_0-e^{-aR},\lambda_0+e^{-aR}), \quad
\phi_j,\varphi_j\in \big(1-i\cliffu\big)\hh(Y,F) \quad
\text{with } j=1,\cdots,m,
\end{equation}
such that
$\big(\phi_j\big)_{1\leqslant j\leqslant m}$ are mutually orthogonal
and
\begin{equation}
\label{eq16-pf-thm-ch3-s-eigen-proj}
\omega^{\mathrm{zm},-}\big|_{\partial Z_{1,0}} = \sum_{j=1}^m \phi_j, \quad
e^{4iR\lambda_j}C_{12}(\lambda_j)\varphi_j = \varphi_j, \quad
\big\lVert \varphi_j - \phi_j \big\rVert_Y  <
e^{-aR} \big\lVert \omega^{\mathrm{zm},-} \big\rVert_Y.
\end{equation}
By Proposition \ref{prop-ch2-g-eigen-Dinf},
\eqref{eq-ch3-def-Einf-R} and
the second identity in \eqref{eq15-pf-thm-ch3-s-eigen-proj},
for each $j=1,\cdots,m$,
there exists
$(\widetilde{\omega}_{1,j},\widetilde{\omega}_{1,j}^\mathrm{zm},
\widetilde{\omega}_{2,j},\widetilde{\omega}_{2,j}^\mathrm{zm})
\in \mathscr{E}_{\lambda_j,R}(Z_{12,\infty},F)$
such that
\begin{equation}
\label{eq17-pf-thm-ch3-s-eigen-proj}
\widetilde{\omega}_{1,j}^\mathrm{zm} =
e^{-i\lambda_ju_1} \varphi_j +
e^{ i\lambda_ju_1} C_1(\lambda_j)\varphi_j.
\end{equation}
By \eqref{eq-ch3-def-J},
set
\begin{equation}
\label{eq18-pf-thm-ch3-s-eigen-proj}
\widetilde{\omega} = \sum_{j=1}^m  J_{A,Z_R}
(\widetilde{\omega}_{1,j},\widetilde{\omega}_{1,j}^\mathrm{zm},
\widetilde{\omega}_{2,j},\widetilde{\omega}_{2,j}^\mathrm{zm}).
\end{equation}
By \eqref{eq-ch3-def-J},
\eqref{eq17-pf-thm-ch3-s-eigen-proj}
and \eqref{eq18-pf-thm-ch3-s-eigen-proj},
we have
\begin{equation}
\label{eq19-pf-thm-ch3-s-eigen-proj}
\widetilde{\omega}^\mathrm{zm}
= \sum_{j=1}^m \widetilde{\omega}_{1,j}^\mathrm{zm}
= \sum_{j=1}^m \Big( e^{-i\lambda_ju_1} \varphi_j +
e^{ i\lambda_ju_1} C_1(\lambda_j)\varphi_j \Big).
\end{equation}
Now we take the difference between
\eqref{eq0-pf-lem-ch3-cut} and
\eqref{eq19-pf-thm-ch3-s-eigen-proj}.
Applying Lemma \ref{lem-ch3-cut} and
\eqref{eq14-pf-thm-ch3-s-eigen-proj}-\eqref{eq16-pf-thm-ch3-s-eigen-proj},
we get
\begin{equation}
\label{eq110-pf-thm-ch3-s-eigen-proj}
\big\lVert \omega^\mathrm{zm} - \tilde{\omega}^\mathrm{zm} \big\rVert_{Y_{[-R,R]}}
= \mathscr{O}\big(e^{-aR}\big)
\big \lVert \omega \big\rVert_{Z_{1,0}\cup Z_{2,0}}.
\end{equation}

By \eqref{eq-ch3-def-mathscrJ}, set
\begin{equation}
\label{eq111-pf-thm-ch3-s-eigen-proj}
\mu = \sum_{j=1}^m  \mathscr{J}_{A,B,Z_R}
(\widetilde{\omega}_{1,j},\widetilde{\omega}_{1,j}^\mathrm{zm},
\widetilde{\omega}_{2,j},\widetilde{\omega}_{2,j}^\mathrm{zm}).
\end{equation}
Now we take the difference between
\eqref{eq18-pf-thm-ch3-s-eigen-proj}
and \eqref{eq111-pf-thm-ch3-s-eigen-proj}.
Applying Proposition \ref{prop-ch3-s-eigen-proj-inj},
we get
\begin{equation}
\label{eq112-pf-thm-ch3-s-eigen-proj}
\big\lVert \widetilde{\omega}- \mu \big\rVert_{Z_R}
= \mathscr{O}\big(e^{-aR}\big) \sum_{j=1}^m
\Big( \big\lVert \widetilde{\omega}_{1,j} \big\rVert_{Z_{1,0}}
+ \big\lVert \widetilde{\omega}_{2,j} \big\rVert_{Z_{2,0}} \Big).
\end{equation}
By \eqref{eq-rem-ch2-g-eigen},
\eqref{eq14-pf-thm-ch3-s-eigen-proj},
\eqref{eq16-pf-thm-ch3-s-eigen-proj} and
\eqref{eq112-pf-thm-ch3-s-eigen-proj},
we get
\begin{equation}
\label{eq113-pf-thm-ch3-s-eigen-proj}
\big\lVert \widetilde{\omega}^\mathrm{zm}- \mu^\mathrm{zm} \big\rVert_{Y_{[-R,R]}}
\leqslant \big\lVert \widetilde{\omega}- \mu \big\rVert_{Z_R}
= \mathscr{O}\big(e^{-aR}\big) \sum_{j=1}^m \big\lVert \varphi_j \big\rVert_Y
= \mathscr{O}\big(e^{-aR}\big) \big \lVert \omega \big\rVert_{Z_{1,0}\cup Z_{2,0}}.
\end{equation}
By Lemma \ref{lem-ch3-zm-L2},
\eqref{eq110-pf-thm-ch3-s-eigen-proj} and
\eqref{eq113-pf-thm-ch3-s-eigen-proj},
we obtain \eqref{eq1-pf-thm-ch3-s-eigen-proj}.

\noindent\textbf{Step 3. }
We prove the following result.
Let $\big(V,\lVert\cdot\rVert\big)$
be a Hermitian vector space of dimension $m$.
Let $\{v_j\}_{1\leqslant j\leqslant m}$ be an orthogonal basis.
Let $\{w_j\}_{1\leqslant j\leqslant m}$ be a family of vectors in $V$
satisfying
$\big\lVert v_j-w_j \big\rVert < m^{-1/2} \big\lVert v_j \big\rVert$.
Then $\{w_j\}_{1\leqslant j\leqslant m}$ is a basis of $V$.

Since the condition is homogeneous,
we may assume that $\{v_j\}_{1\leqslant j\leqslant m}$ is an orthonormal basis.
Let $x_1,\cdots,x_m\in\C$ such that
$\sum_{j=1}^m x_jw_j=0$.
We have
\begin{equation}
\sum_{j=1}^m x_jv_j = \sum_{j=1}^m x_j(v_j-w_j).
\end{equation}
Taking the norm of both sides and applying Cauchy-Schwarz inequality,
we get
\begin{equation}
\sum_{j=1}^m |x_j|^2 \leqslant
\Big( \sum_{j=1}^m |x_j| \big\lVert v_j-w_j \big\rVert \Big)^2 \leqslant
\Big( \sum_{j=1}^m |x_j|^2 \Big)
\Big( \sum_{j=1}^m \big\lVert v_j-w_j \big\rVert^2 \Big).
\end{equation}
By the hypothesis, we have $\sum_{j=1}^m \big\lVert v_j-w_j \big\rVert^2 < 1$.
Then we must have $x_1=\cdots=x_m=0$.
Hence $\{w_j\}_{1\leqslant j\leqslant m}$ is a basis.

\noindent\textbf{Step 4. }
We show that $\mathscr{J}_{A, B, Z_R}$ is bijective.

We fix $R_0>2$.
By the proof of Theorem \ref{thm-ch3-gap},
for any $\alpha > 0$,
the function
$R \mapsto \dim \mathscr{E}_{[-\alpha R^{-1},\alpha R^{-1}]}(Z_R,F)$
is non increasing.
Taking $\alpha = \delta R$ and applying Weyl's law,
we get
\begin{align}
\label{eq31-pf-thm-ch3-s-eigen-proj}
\begin{split}
\dim \mathscr{E}_B(Z_R,F)
& \leqslant \dim \mathscr{E}_{[-\delta,\delta]}(Z_R,F) \\
& \leqslant  \dim \mathscr{E}_{[-\delta R/R_0,\delta R/R_0]}(Z_{R_0},F)
= \mathscr{O}\big( R^n \big).
\end{split}
\end{align}
By Step 2, Step 3 and \eqref{eq31-pf-thm-ch3-s-eigen-proj},
$\mathscr{J}_{A, B, Z_R}$ is surjective.
On the other hand,
by Proposition \ref{prop-ch3-s-eigen-proj-inj},
$\mathscr{J}_{A, B, Z_R}$ is injective.
The proof of Theorem \ref{thm-ch3-s-eigen-proj} is completed.
\end{proof}

Recall that the $\Lambda_R$ was defined in \eqref{eq-ch3-def-Lambda}.
We denote
\begin{align}
\begin{split}
\Lambda_R\backslash\{0\} & =
\big\{ \lambda_k \;:\;k\in\Z\backslash\{0\}\big\}, \quad
\text{with } \cdots\leqslant\lambda_{-1}<0<\lambda_1\leqslant\lambda_2\leqslant\cdots ,\\
\Sp\big(D^F_{Z_R}\big) \backslash\{0\} & =
\big\{ \rho_k \;:\;k\in\Z\backslash\{0\}\big\}, \quad
\text{with } \cdots\leqslant\rho_{-1}<0<\rho_1\leqslant\rho_2\leqslant\cdots.
\end{split}
\end{align}

\begin{thm}
\label{thm-ch3-s-eigen}
There exists $a>0$ such that
for any $|\lambda_k|<\delta/2$,
\begin{equation}
\lambda_k-\rho_k = \mathscr{O}\big(e^{-aR}\big).
\end{equation}
\end{thm}
\begin{proof}
We only consider $k>0$.
By the paragraph in Appendix containing \eqref{eq-app-e-C},
\eqref{eq-ch3-def-Lambda} and
\eqref{eq31-pf-thm-ch3-s-eigen-proj},
we have
\begin{equation}
\# \big(\Lambda_R \cap [-\delta,\delta]\big)
= \mathscr{O}\big(R\big), \quad
\# \big(\Sp\big( D^F_{Z_R}\big) \cap [-\delta,\delta]\big)
= \mathscr{O}\big( R^n \big).
\end{equation}
We fix $a>0$ such that Theorem \ref{thm-ch3-s-eigen-proj} holds.
By the pigeonhole principle,
we could construct $0<b_1<\cdots<b_l$ such that
\begin{equation}
b_1 \leqslant R^{-2},\quad
b_l \geqslant 2\delta/3, \quad
b_{j+1} \leqslant b_j + e^{-aR/2}, \quad \text{for } j=1,\cdots,l-1,
\end{equation}
and
\begin{equation}
[b_j-2e^{-aR},b_j+2e^{-aR}] \cap
\Big( \Lambda_R \cup \Sp\big( D^F_{Z_R}\big) \Big) = \emptyset. \\
\end{equation}
Now applying Theorem \ref{thm-ch3-s-eigen-proj}
with $A=(b_j,b_{j+1})$ and $B = (b_j-e^{-aR},b_{j+1}+e^{-aR})$,
we see that
the number of $\lambda_k$ lying in $(b_j,b_{j+1})$
equals to the number of $\rho_k$ lying in $(b_j,b_{j+1})$.
On the other hand,
by Theorem \ref{thm-ch3-gap}
and the paragraph in Appendix containing \eqref{eq-app-e-C},
there is neither $\lambda_k$ nor $\rho_k$ lying in $(0,b_1)$.
Thus $\lambda_k\in(b_j,b_{j+1})$ if and only if $\rho_k\in(b_j,b_{j+1})$.
We get
\begin{equation}
| \lambda_k - \rho_k | < b_{j+1} - b_j \leqslant e^{-aR/2}.
\end{equation}
The proof of Theorem \ref{thm-ch3-s-eigen} is completed.
\end{proof}

For $p=0,\cdots,n$,
set
\begin{align}
\begin{split}
& C^p_{12}(\lambda) = C_{12}(\lambda) \big|_{\hhe^p(Y,F)\oplus \hhe^{p-1}(Y,F)du}, \\
& \Lambda^p_R = \big\{ \lambda>0 \; : \;
\det \big( e^{4i\lambda R} C^p_{12}(\lambda) -\Id \big) = 0 \big\} .
\end{split}
\end{align}
Let $D^{F,2,(p)}_{Z_R}$ be the restriction of $D^{F,2}_{Z_R}$ to $\Omega^p(Z_R,F)$.
We denote
\begin{align}
\begin{split}
\Lambda^p_R & = \big\{ \lambda_k \;:\;k=1,2,\cdots\big\}, \quad
\text{with } 0<\lambda_1\leqslant\lambda_2\leqslant\cdots ,\\
\Sp\big( D^{F,2,(p)}_{Z_R}\big) \backslash\{0\} & = \big\{ \rho_k \;:\;k=1,2,\cdots\big\}, \quad
\text{with } 0<\rho_1\leqslant\rho_2\leqslant\cdots.
\end{split}
\end{align}

\begin{thm}
\label{thm-ch3-s-eigen-degree}
There exists $a>0$ such that
for $\lambda_k<\delta/2$,
\begin{equation}
\label{eq-thm-ch3-s-eigen-degree}
\lambda^2_k-\rho_k = \mathscr{O}\big(e^{-aR}\big).
\end{equation}
\end{thm}
\begin{proof}
We denote
\begin{equation}
\mathscr{E}^p_{A,R}(Z_{12,\infty},F) =
\Big\{s\in\mathscr{E}^p_{A,R}(Z_{12,\infty},F) \;:\;
J_{A,Z_R}(s)\subseteq\Omega^p(Z_R,F)\Big\}.
\end{equation}
Using Propositions \ref{prop-ch2-C-lambda}, \ref{prop-ch2-g-eigen-Dinf},
\eqref{eq-ch2-def-C-lambda-2},
\eqref{eq-ch3-def-Einf-R} and \eqref{eq-ch3-def-Einf-AR},
we could show that
if $A\subseteq\R$ is symmetric
(i.e., $\lambda\in A \Rightarrow -\lambda\in A$),
$\mathscr{E}_{A,R}(Z_{12,\infty},F)$ is homogeneous, i.e.,
\begin{equation}
\mathscr{E}_{A,R}(Z_{12,\infty},F) =
\bigoplus_{p=0}^n \mathscr{E}^p_{A,R}(Z_{12,\infty},F).
\end{equation}
Moreover,
\begin{equation}
\big\{\lambda>0\;:\;
\mathscr{E}^p_{\{\lambda,-\lambda\},R}(Z_{12,\infty},F)\neq 0 \big\}
= \Lambda^p_R.
\end{equation}
On the other hand,
if $B\subseteq\R$ is symmetric,
$\mathscr{E}_B(Z_R,F)$ is homogeneous.
Now,
by Theorem \ref{thm-ch3-s-eigen-proj},
for symmetric subset $A,B\subseteq\R$
satisfying \eqref{eq-thm-ch3-s-eigen-proj},
$\mathscr{J}_{A,B,Z_R}$
maps $\mathscr{E}^p_{A,R}(Z_{12,\infty},F)$ bijectively to
$\mathscr{E}_B^p(Z_R,F)  :=
\mathscr{E}_B(Z_R,F) \cap \Omega^p(Z_R,F)$.
Restricting $\mathscr{J}_{A,B,Z_R}$ to $\mathscr{E}^p_{A,R}(Z_{12,\infty},F)$ and
proceeding in the same way as in the proof of Theorem \ref{thm-ch3-s-eigen},
we obtain \eqref{eq-thm-ch3-s-eigen-degree}.
This completes the proof of Theorem \ref{thm-ch3-s-eigen-degree}.
\end{proof}

\subsection{Boundary case}
\label{ch3-5}

We use the convention from \eqref{eq-ch2-def-hh-bd},
\begin{equation}
\hh_{\bd}(Z_{1,\infty},F) = \hh_\mathrm{rel}(Z_{1,\infty},F), \quad
\hh_{\bd}(Z_{2,\infty},F) = \hh_\mathrm{abs}(Z_{2,\infty},F).
\end{equation}
Recall that the maps
$\mathscr{R}_{d^F, 1}$ and
$\mathscr{R}_{d^F, 2}$
are defined in \eqref{eq-ch3-R}.
For $j=1,2$,
we construct
\begin{equation}
F_{Z_{j,R}} : \; \hh_{\bd}(Z_{j,\infty},F) \rightarrow \Omega_{\bd}^\bullet(Z_{j,R},F)
\end{equation}
as follows,
\begin{align}
\label{eq-ch3-def-FG-bd}
\begin{split}
F_{Z_{j,R}}(\omega,\hat{\omega})\big|_{Z_{j,0}} = \; &
\omega, \quad \text{for } j=1,2, \\
F_{Z_{1,R}}(\omega,\hat{\omega})\big|_{Y_{[0,R]}} = \; &
d^F \big( \chi_{1,R}(\cdot+R)\;\mathscr{R}_{d^F, 1}(\omega,\hat{\omega}) \big) +
\pi_Y^*\hat{\omega},\\
F_{Z_{2,R}}(\omega,\hat{\omega})\big|_{Y_{[-R,0]}} = \; &
d^F \big( \chi_{2,R}(\cdot-R)\;\mathscr{R}_{d^F, 2}(\omega,\hat{\omega}) \big) +
\pi_Y^*\hat{\omega}.
\end{split}
\end{align}
By Proposition \ref{prop-ch2-R},
$F_{Z_{j,R}}$ is well-defined.
Moreover,
we have
\begin{equation}
\label{eq-ch3-F-close-bd}
d^F F_{Z_{j,R}}(\omega,\hat{\omega}) = 0.
\end{equation}

Recall that $\varphi_R: Z_R \rightarrow Z$
is defined by \eqref{eq-ch3-def-varphiR}.
Set
\begin{equation}
\label{eq-ch3-def-varphiR-bd}
\varphi_{j,R} = \varphi_R \big|_{Z_{j,R}} : Z_{j,R} \rightarrow Z_j,
\end{equation}
which is a diffeomorphism.
Then $\varphi_{j,R}$ induces an isomorphism
$\varphi_{j,R,*}: H^\bullet_\bd(Z_{j,R},Y) \rightarrow H^\bullet_\bd(Z_j,F)$.

The following proposition is parallel to Proposition \ref{prop-ch3-F-ind}.

\begin{prop}
\label{prop-ch3-F-ind-bd}
For $\omega_1\in\hhl(Z_{1,\infty},F)$
and $(\omega_2,\hat{\omega})\in\hh_\mathrm{abs}(Z_{2,\infty},F)$,
under the identification $\varphi_{j,R,*}$,
the cohomology classes
\begin{equation}
\label{eq-prop-ch3-F-ind-bd}
\big[ F_{Z_{1,R}}(\omega_1, 0) \big] \in H^\bullet_\mathrm{rel}(Z_1,F), \quad
\big[ F_{Z_{2,R}}(\omega_2,\hat{\omega}) \big] \in H^\bullet_\mathrm{abs}(Z_2,F)
\end{equation}
are independent of $R$.
\end{prop}
\begin{proof}
For $j=1,2$,
let $\overline{Z}_{j,R}$ be another copy of $Z_{j,R}$
with inverse coordinates on its cylindrical part,
i.e.,
we identify the cylindrical part of
$\overline{Z}_{1,R}$ (resp. $\overline{Z}_{2,R}$)
with $Y_{[-R,0]}$ (resp. $Y_{[0,R]}$).
Set $Z_{j,R}^\mathrm{db} = Z_{j,R} \cup_Y \overline{Z}_{j,R}$,
which is a compact manifold without boundary.
Then the cylindrical part of $Z_{j,R}^\mathrm{db}$
is identified with $Y_{[-R,R]}$.

Let $\iota: Z_{j,R}^\mathrm{db}\rightarrow Z_{j,R}^\mathrm{db}$
be the diffeomorphism exchanging the two copies of $Z_{j,R}$.
We extend $(F,h^F)$ $\iota$-equivariantly to $Z_{j,R}^\mathrm{db}$.
We denote by $\iota^*$ the action on
$H^\bullet(Z_{j,R}^\mathrm{db},F)$
induced by $\iota$.
Let
$\Big(H^\bullet(Z_{j,R}^\mathrm{db},F)\Big)^\pm
\subseteq H^\bullet(Z_{j,R}^\mathrm{db},F)$
be the eigenspaces of $\iota^*$ associated with $\pm 1$.
The embedding  $Z_{j,R}\hookrightarrow Z_{j,R}^\mathrm{db}$ induces the following isomorphisms
\begin{equation}
\label{eq1-pf-prop-ch3-F-ind-bd}
H^\bullet_\mathrm{rel}(Z_{1,R},F) = \Big(H^\bullet(Z_{1,R}^\mathrm{db},F)\Big)^-, \quad
H^\bullet_\mathrm{abs}(Z_{2,R},F) = \Big(H^\bullet(Z_{2,R}^\mathrm{db},F)\Big)^+.
\end{equation}

Let $\hh(Z_{1,\infty}^\mathrm{db},F)$ (resp. $\hh(Z_{2,\infty}^\mathrm{db},F)$)
be $\hh(Z_{12,\infty},F)$ (defined by \eqref{eq-ch3-def-hh-12})
with $(Z_{1,\infty},Z_{2,\infty})$ replaced by
$(Z_{1,\infty},\overline{Z}_{1,\infty})$
(resp. $(\overline{Z}_{2,\infty},Z_{2,\infty})$).
We have the following embedding
\begin{align}
\label{eq2-pf-prop-ch3-F-ind-bd}
\begin{split}
\hh_\mathrm{bd}(Z_{j,\infty},F) & \rightarrow \hh(Z_{j,\infty}^\mathrm{db},F) \\
(\omega,\hat{\omega}) & \mapsto (\omega,(-1)^j\omega,\hat{\omega}).
\end{split}
\end{align}
We construct an involution
\begin{align}
\label{eq3-pf-prop-ch3-F-ind-bd}
\begin{split}
\iota^\mathscr{H} : \;
\hh(Z_{j,\infty}^\mathrm{db},F) & \rightarrow \hh(Z_{j,\infty}^\mathrm{db},F) \\
(\omega_1,\omega_2,\hat{\omega}) & \mapsto \left\{
\begin{array}{ll}
(\omega_2,\omega_1, \hat{\omega}) & \text{ if } \hat{\omega}\in\hh(Y,F),\\
(\omega_2,\omega_1,-\hat{\omega}) & \text{ if } \hat{\omega}\in\hh(Y,F)du.
\end{array}\right.
\end{split}
\end{align}
Let
$\Big(\hh(Z_{j,\infty}^\mathrm{db},F)\Big)^\pm
\subseteq \hh(Z_{j,\infty}^\mathrm{db},F)$
be the eigenspace of $\iota^\mathscr{H}$ associated with $\pm 1$.

Under the identification \eqref{eq2-pf-prop-ch3-F-ind-bd},
by \eqref{eq-ch2-def-hh},
\eqref{eq-ch2-def-hh-bd} and
\eqref{eq3-pf-prop-ch3-F-ind-bd},
we have
\begin{equation}
\label{eq4-pf-prop-ch3-F-ind-bd}
\Big(\hh(Z_{1,\infty}^\mathrm{db},F)\Big)^- = \hh_\mathrm{rel}(Z_{1,\infty},F),\quad
\Big(\hh(Z_{2,\infty}^\mathrm{db},F)\Big)^+ = \hh_\mathrm{abs}(Z_{2,\infty},F).
\end{equation}

By \eqref{eq-ch3-def-FG}
and \eqref{eq3-pf-prop-ch3-F-ind-bd},
the following diagram commutes
\begin{equation}
\label{eq5-pf-prop-ch3-F-ind-bd}
\xymatrix{
\hh(Z_{j,\infty}^\mathrm{db},F)
\ar[r]^{\iota^\mathscr{H}}
\ar[d]_{\left[F_{Z_{j,R}^\mathrm{db}}\right]}
& \hh(Z_{j,\infty}^\mathrm{db},F)
\ar[d]^{\left[F_{Z_{j,R}^\mathrm{db}}\right]} \\
H^\bullet(Z_{j,R}^\mathrm{db},F)
\ar[r]^{\iota^*}
& H^\bullet(Z_{j,R}^\mathrm{db},F).}
\end{equation}
Taking the eigenspace associated with $\pm 1$ in \eqref{eq5-pf-prop-ch3-F-ind-bd}
and applying \eqref{eq1-pf-prop-ch3-F-ind-bd} and \eqref{eq4-pf-prop-ch3-F-ind-bd},
we get the following maps
\begin{align}
\label{eq6-pf-prop-ch3-F-ind-bd}
\begin{split}
& \big[F_{Z_{1,R}^\mathrm{db}}\big] :\;
\hh_\mathrm{rel}(Z_{1,\infty},F) \rightarrow
H^\bullet_\mathrm{rel}(Z_{1,R},F), \\
& \big[F_{Z_{2,R}^\mathrm{db}}\big] :\;
\hh_\mathrm{abs}(Z_{2,\infty},F) \rightarrow
H^\bullet_\mathrm{abs}(Z_{2,R},F).
\end{split}
\end{align}
From \eqref{eq-ch3-def-FG}
and \eqref{eq-ch3-def-FG-bd},
we could verify that
the maps in \eqref{eq6-pf-prop-ch3-F-ind-bd}
coincide with $\big[F_{Z_{1,R}}\big]$ and $\big[F_{Z_{2,R}}\big]$.
Now Proposition \ref{prop-ch3-F-ind-bd}
follows from Proposition \ref{prop-ch3-F-ind}.
This completes the proof of Proposition \ref{prop-ch3-F-ind-bd}.
\end{proof}

In the rest of this subsection,
we will state several results parallel to those
in \textsection \ref{ch3-3}, \ref{ch3-4}.

For $R>2$,
we equip $\hh_\bd(Z_{j,\infty},F)$ with the following metric,
\begin{equation}
\label{eq-ch3-def-metric-hh-bd}
\big\lVert (\omega,\hat{\omega}) \big\rVert^2_{\hh_\mathrm{bd}(Z_{j,\infty},F),R} =
\big\lVert \omega \big\rVert^2_{Z_{j,R}}.
\end{equation}

By taking the $(-1)^j$-eigenspace of $\iota$ in $\Omega^\bullet(Z^\mathrm{bd}_{j,R},F)$
and applying Proposition \ref{prop-ch3-approx-f-g-total} to $F_{Z^\mathrm{bd}_{j,R}}$,
we get the following result.

\begin{prop}
\label{prop-ch3-approx-f-g-bd}
There exists $a>0$ such that for
$(\omega,\hat{\omega})\in\hh_\mathrm{bd}(Z_{j,\infty},F)$,
we have
\begin{equation}
\label{eq-prop-ch3-approx-f-g-bd}
\big\lVert F_{Z_{j,R}}(\omega,\hat{\omega})\big\rVert_{Z_{j,R}}
= \Big( 1 + \mathscr{O}\big(e^{-aR}\big) \Big)
\big\lVert (\omega,\hat{\omega}) \big\rVert_{\hh_\mathrm{bd}(Z_{j,\infty},F),R}.
\end{equation}
\end{prop}

Let
$P^{\Ker\left(D^{F,2}_{Z_{j,R}}\right)}:
\Omega^\bullet_\mathrm{bd}(Z_{j,R},F) \rightarrow \Ker\big(D^{F,2}_{Z_{j,R}}\big)$
be the orthogonal projections.
Set
\begin{equation}
\label{eq-ch3-def-FG-harmonic-bd}
\mathscr{F}_{Z_{j,R}} = P^{\Ker\left(D^{F,2}_{Z_{j,R}}\right)} \circ F_{Z_{j,R}} :
\hh_\mathrm{bd}(Z_{j,\infty},F) \rightarrow \Ker\big(D^{F,2}_{Z_{j,R}}\big).
\end{equation}

Again,
by taking the $(-1)^j$-eigenspace of $\iota$ in $\Omega^\bullet(Z^\mathrm{bd}_{j,R},F)$
and applying Proposition \ref{prop-ch3-approx-harmonic-tot-inj} to $F_{Z^\mathrm{bd}_{j,R}}$,
we get the following result.

\begin{prop}
\label{prop-ch3-approx-harmonic-bd-inj}
There exist $a>0$ such that
for $(\omega,\hat{\omega})\in\hh_\mathrm{bd}(Z_{j,\infty},F)$,
we have
\begin{equation}
\big\lVert (F_{Z_{j,R}}-\mathscr{F}_{Z_{j,R}})(\omega,\hat{\omega}) \big\rVert_{\continu\!, Z_{j,R}}
= \mathscr{O}\big(e^{-aR}\big)
\big\lVert (\omega,\hat{\omega}) \big\rVert_{\hh_\mathrm{bd}(Z_{j,\infty},F),R}.
\end{equation}
\end{prop}

Again, by using the $\Z_2$-equivariant version of Theorem \ref{thm-ch3-approx-harmonic-tot} as above,
we get the following result.

\begin{thm}
\label{thm-ch3-approx-harmonic-bd}
For $R>2$ large enough,
the maps $\mathscr{F}_{Z_{j,R}}$ is bijective.
\end{thm}

The following result is a direct consequence of Theorem \ref{thm-ch3-gap},
which is compatible with the $\Z_2$-action.

\begin{thm}
\label{thm-ch3-gap-bd}
There exists $\alpha>0$ such that
\begin{equation}
\Sp\big(D^F_{Z_{j,R}}\big) \subseteq
(-\infty,-\alpha R^{-1}) \cup \{0\} \cup (\alpha R^{-1},+\infty) .
\end{equation}
\end{thm}

For $j = 1,2$,
set
\begin{equation}
\label{eq-ch3-def-C-bd}
C_{j,\bd}(\lambda) = (-1)^j \big( C_j(\lambda)\big|_{\mathscr{H}^\bullet(Y,F)} - C_j(\lambda)\big|_{\mathscr{H}^\bullet(Y,F)du} \big).
\end{equation}
For $p=0,\cdots,n$,
set
\begin{align}
\begin{split}
& C_{j,\bd}^p(\lambda) = C_{j,\bd}(\lambda) \big|_{\hhe^p(Y,F)\oplus\hhe^{p-1}(Y,F)du}, \\
& \Lambda_{j,R}^p = \big\{ \lambda>0 \;:\;
\det \big( e^{2i\lambda R}C_{j,\bd}^p(\lambda) -\Id \big) = 0 \big\}.
\end{split}
\end{align}

Finally the $\Z_2$-equivariant version of Theorem \ref{thm-ch3-s-eigen-degree}
is as follows.

\begin{thm}
\label{thm-ch3-s-eigen-degree-bd}
Theorem \ref{thm-ch3-s-eigen-degree}
holds true for $\big( \Sp \big( D^{F,2,(p)}_{Z_{j,R}} \big), \Lambda_{j,R}^p\big)$.
\end{thm}

We sketch the idea of the proof of Theorem \ref{thm-ch3-s-eigen-degree-bd}.
A direct application of Theorem \ref{thm-ch3-s-eigen-degree} to $Z_{j,R}^\mathrm{db}$
yields the following equation
\begin{equation}
\label{eq-pf-thm-ch3-s-eigen-degree-bd}
0 = \det \big(e^{4i\lambda R}C_{j,\bd}^{p,2}(\lambda) -\Id \big)
= \det \big(e^{2i\lambda R}C_{j,\bd}^p(\lambda) -\Id \big)
\det \big(e^{2i\lambda R}C_{j,\bd}^p(\lambda) + \Id \big).
\end{equation}
And the first factor on the right hand side of \eqref{eq-pf-thm-ch3-s-eigen-degree-bd}
corresponds to the $(-1)^j$-eigenspace.

\section{Asymptotics of zeta determinants}
\label{ch4}

In this section,
we study the asymptotics of zeta determinants
of the Hodge-Laplacians in question.
In \textsection \ref{ch4-1},
we introduce a simplified version of
the model constructed in \textsection \ref{ch3-2}.
In \textsection \ref{ch4-2},
we decompose the zeta determinant into short/large time contribution
and estimate the short time contribution.
In \textsection \ref{ch4-3},
we estimate the large time contribution.

\subsection{Model operators}
\label{ch4-1}

For $R>2$,
we denote
$I_{1,R}=[-R,0]$,
$I_{2,R}=[0,R]$ and
$I_R=[-R,R]$.
We sometimes add a sub-index $0$ to objects associated with $I_R$,
e.g., $I_{0,R}=I_R$.

We denote by $\Omega^\bullet\big(I_R,\hh(Y,F)\big)$
the vector space of differential forms on $I_R$ with values in $\hh(Y,F)$.
For $\omega\in\Omega^p(I_R,\hhe^q(Y,F))$,
we denote $\deg \omega = p+q$,
which we call the total degree of $\omega$.
We have the canonical identification
\begin{equation}
\Omega^\bullet\big(I_R,\hh(Y,F)\big) \simeq \smooth\big(I_R,\hh(Y,F[du])\big).
\end{equation}
For $\omega\in\Omega^\bullet\big(I_R,\hh(Y,F)\big)$ and $u\in I_R$,
we denote by $\omega_u\in\hh(Y,F[du])$ its value at $u$ as in \eqref{eq-ch2-iden-duform}.

Recall that $\LL^\bullet_j\subseteq\hh(Y,F[du])$ ($j=1,2$)
is the set of limiting values of $\hh(Z_{j,\infty},F)$
(see \eqref{eq-ch2-def-LL}).
Let $D_{I_R} = \cliffu \frac{\partial}{\partial u}$ with
\begin{equation}
\label{eq-ch4-dom-D}
\mathrm{Dom} \big( D_{I_R} \big) =
\big\{ \omega\in\Omega^\bullet(I_R,\hh(Y,F)) \;:\;
\omega_{-R}\in\LL_1^\bullet,\;\omega_R\in\LL_2^\bullet \big\}.
\end{equation}
Let $D_{I_{1,R}} = \cliffu \frac{\partial}{\partial u}$
and $D_{I_{2,R}}=\cliffu \frac{\partial}{\partial u}$ with
\begin{align}
\label{eq-ch4-dom-D12}
\begin{split}
\mathrm{Dom} \big( D_{I_{1,R}} \big) & =
\big\{ \omega\in\Omega^\bullet(I_{1,R},\hh(Y,F)) \;:\;
\omega_{-R}\in\LL_1^\bullet,\;\omega_0\in\hh(Y,F)du \big\},\\
\mathrm{Dom} \big( D_{I_{2,R}} \big) & =
\big\{ \omega\in\Omega^\bullet(I_{2,R},\hh(Y,F)) \;:\;
\omega_R\in\LL_2^\bullet,\;\omega_0\in\hh(Y,F) \big\}.
\end{split}
\end{align}
We use the convention $\LL^\bullet_{1,\bd}= \LL^\bullet_{1,\mathrm{rel}}$ and $\LL^\bullet_{2,\bd}= \LL^\bullet_{2,\mathrm{abs}}$.
The operator $D^2_{I_{j,R}}$ ($j=0,1,2$) preserves the total degree.
We denote by $D^{2,(p)}_{I_{j,R}}$ its restriction to the component of total degree $p$.
Let $\LL^\bullet_{j,\mathrm{abs/rel}}\subseteq\LL^\bullet_j$ ($j=1,2$)
be the absolute/relative component
(see \eqref{eq-ch2-def-LL-bd}).
By \eqref{eq-ch4-dom-D} and \eqref{eq-ch4-dom-D12},
we have
\begin{equation}
\Ker \big( D^{2,(p)}_{I_R} \big) = \LL_1^p \cap \LL_2^p, \quad
\Ker \big( D^{2,(p)}_{I_{j,R}} \big) = \LL_{j,\bd}^p, \quad \text{for } j=1,2,
\end{equation}
where the elements in $\LL_1^p \cap \LL_2^p$ (resp. $\LL_{j,\bd}^p$)
are viewed as constant functions on $I_R$ (resp. $I_{j,R}$).
Here we remark that the boundary condition for $D^2_{I_{j,R}}$
is defined in the same way as in \eqref{eq-ch1-def-bd-D2}.

Let
\begin{equation}
\label{eq-ch4-def-alpha}
\alpha_{p,\LL} : \; \LL_{1,\mathrm{rel}}^p \rightarrow \LL_1^p\cap \LL_2^p
\end{equation}
be the composition of
the orthogonal projection
$\LL_{1,\mathrm{rel}}^p \rightarrow \LL_{1, \mathrm{rel}}^p \cap \LL_{2,\mathrm{rel}}^p$
and the embedding
$\LL_{1, \mathrm{rel}}^p \cap \LL_{2,\mathrm{rel}}^p \hookrightarrow \LL_1^p\cap \LL_2^p$.
Let
\begin{equation}
\label{eq-ch4-def-beta}
\beta_{p,\LL} : \; \LL_1^p\cap \LL_2^p \rightarrow  \LL_{2,\mathrm{abs}}^p
\end{equation}
be the composition of
the orthogonal projection
$\LL_1^p\cap \LL_2^p \rightarrow \LL_{1,\mathrm{abs}}^p\cap \LL_{2, \text{abs}}^p$
and the embedding
$\LL_{1,\mathrm{abs}}^p \cap \LL_{2, \text{abs}}^p \hookrightarrow  \LL_{2,\mathrm{abs}}^p$.
Let
\begin{equation}
\label{eq-ch4-def-delta}
\delta_{p,\LL} : \; \LL_{2,\mathrm{abs}}^p \rightarrow \LL_{1, \mathrm{rel}}^{p+1}
\end{equation}
be the composition of
the map
$du\wedge: \LL_{2,\mathrm{abs}}^p \rightarrow \LL_{2,\mathrm{abs}}^p du = \LL_{2,\mathrm{rel}}^{p+1, \perp}$ (cf. \eqref{eq-ch2-def-LL-bd-2}),
the orthogonal projection
$\LL_{2,\mathrm{rel}}^{p+1, \perp} \rightarrow \LL_{1, \mathrm{rel}}^{p+1}\cap \LL_{2,\text{rel}}^{p+1, \perp}$
and the embedding
$\LL_{1, \mathrm{rel}}^{p+1}\cap \LL_{2, \text{rel}}^{p+1, \perp} \hookrightarrow \LL_{1, \mathrm{rel}}^{p+1}$.
The maps in
\eqref{eq-ch4-def-alpha},
\eqref{eq-ch4-def-beta} and
\eqref{eq-ch4-def-delta}
form an exact sequence
\begin{equation}
\label{eq-ch4-mv-LL}
\xymatrix{
\cdots \ar[r] &
\LL_{1,\mathrm{rel}}^p   \ar[r]^{\alpha_{p,\LL}} &
\LL_1^p \cap \LL_2^p \ar[r]^{\beta_{p,\LL}} &
\LL_{2,\mathrm{abs}}^p \ar[r]^{\delta_{p,\LL}} &
\cdots.
}
\end{equation}
The exactness of \eqref{eq-ch4-mv-LL}
is guaranteed by the following identities
from \eqref{eq-ch2-def-LL-bd} and \eqref{eq-ch2-def-LL-bd-2},
\begin{align}
\label{eq-ch4-exact-grnt}
\begin{split}
& \im(\alpha_{p,\LL}) =
\Ker(\beta_{p,\LL}) =
\LL_{1, \mathrm{rel}}^p \cap \LL_{2,\mathrm{rel}}^p, \\
& \im(\beta_{p,\LL}) =
\Ker(\delta_{p,\LL}) =
\LL_{1, \mathrm{abs}}^p \cap \LL_{2,\mathrm{abs}}^p, \\
& \im(\delta_{p,\LL}) =
\Ker(\alpha_{p+1,\LL}) =
\LL_{1, \mathrm{rel}}^{p+1} \cap \LL_{2, \mathrm{rel}}^{p+1, \perp}.
\end{split}
\end{align}

We denote $C_{12}=C_{12}(0)$ (resp. $C_{j,\bd}=C_{j,\bd}(0)$).
Let $C_{12}^p$ (resp. $C_{j,\bd}^p$) be its restriction to $\hhe^p(Y,F)\oplus\hhe^{p-1}(Y,F) du$.
By \eqref{eq-ch2-LL-C},
$C_j$ is the reflection with respect to $\LL_j$.
Then,
by \eqref{eq-ch2-def-LL-bd}, \eqref{eq-ch2-def-LL-bd-2}
and \eqref{eq-ch3-def-C-bd},
we have
\begin{align}
\label{eq-ch4-ker-C}
\begin{split}
\Ker \big( C_{1,\mathrm{rel}}^p -\Id \big) & =
\LL_{1,\mathrm{rel}}^p \oplus \cntrtu \LL_{1,\mathrm{rel}}^{p+1},\\
\Ker \big( C_{2,\mathrm{abs}}^p -\Id\big) & =
\LL_{2,\mathrm{abs}}^p \oplus \LL_{2,\mathrm{abs}}^{p-1}du ,\\
\Ker \big( C_{12}^p - \Id \big) & =
\big( \LL_1^p \cap \LL_2^p \big) \oplus
\cntrtu \big( \LL_{1,\mathrm{rel}}^{p+1} \cap \LL_{2,\mathrm{rel}}^{p+1} \big) \oplus
\big( \LL_{1,\mathrm{abs}}^{p-1} \cap \LL_{2,\mathrm{abs}}^{p-1} \big)du.
\end{split}
\end{align}
Rcall that $\chi'$ is defined in \eqref{eq-ch0-def-chi}.
For $C\in\big\{C_{12}, C_{1,\mathrm{rel}}, C_{2,\mathrm{abs}}\big\}$,
we denote
\begin{equation}
\label{eq-ch4-def-chi-C}
\chi'(C) = \sum_p (-1)^p p \dim \Ker \big( C^p - \Id\big).
\end{equation}

\begin{lemma}
\label{lem-ch4-chi}
The following identity holds,
\begin{equation}
\label{eq-lem-ch4-chi}
\chi'(C_{12}) - \chi'(C_{1,\mathrm{abs}}) - \chi'(C_{2,\mathrm{rel}}) =  2\chi'.
\end{equation}
\end{lemma}
\begin{proof}
We denote
\begin{align}
\label{eq0-pf-lem-ch4-chi}
\begin{split}
& h_p = \dim \hhe^p(Y,F), \quad
x_p = \dim \LL_{1,\mathrm{abs}}^p, \quad
y_p = \dim \LL_{2,\mathrm{abs}}^p, \\
& u_p = \dim \big(\LL_{1,\mathrm{abs}}^p \cap \LL_{2,\mathrm{abs}}^p\big), \quad
v_p = \dim \big(\LL_{1,\mathrm{abs}}^{p,\perp} \cap \LL_{2,\mathrm{abs}}^{p,\perp}\big).
\end{split}
\end{align}
By \eqref{eq-ch2-def-LL-bd} and \eqref{eq-ch2-def-LL-bd-2},
we have
\begin{equation}
\label{eq1-pf-lem-ch4-chi}
\dim \LL_{1,\mathrm{rel}}^{p+1} = h_p - x_p, \quad
\dim \LL_{2,\mathrm{rel}}^{p+1} = h_p - y_p, \quad
\dim \big( \LL_{1,\mathrm{rel}}^{p+1} \cap \LL_{2,\mathrm{rel}}^{p+1} \big) = v_p.
\end{equation}
Since
$\hhe^p(Y,F) = (\LL_{1,\mathrm{abs}}^p + \LL_{2,\mathrm{abs}}^p)
\oplus (\LL_{1,\mathrm{abs}}^{p,\perp}\cap\LL_{2,\mathrm{abs}}^{p,\perp})$,
we have
\begin{equation}
\label{eq2-pf-lem-ch4-chi}
h_p = x_p + y_p - u_p + v_p.
\end{equation}
By \eqref{eq-ch2-def-LL-bd},
\eqref{eq-ch2-def-LL-bd-2},
\eqref{eq-ch4-ker-C},
\eqref{eq-ch4-def-chi-C},
\eqref{eq1-pf-lem-ch4-chi} and
\eqref{eq2-pf-lem-ch4-chi},
we have
\begin{align}
\label{eq3-pf-lem-ch4-chi}
\begin{split}
\chi'(C_{12}) - \chi'(C_{1,\bd}) - \chi'(C_{2,\bd}) & = \sum_p 2 (-1)^p (y_p-u_p), \\
\sum_p (-1)^pp
\Big\{ \dim \big( \LL_1^p \cap \LL_2^p \big)  - \dim \LL_{1,\mathrm{rel}}^p - \dim \LL_{2,\mathrm{abs}}^p \Big\}
& = \sum_p (-1)^p (y_p-u_p).
\end{split}
\end{align}
By \eqref{eq-ch0-def-chi}
and \eqref{eq3-pf-lem-ch4-chi},
it remains to show that
\begin{align}
\label{eq4-pf-lem-ch4-chi}
\begin{split}
& \dim \big( \LL_1^p \cap \LL_2^p \big) - \dim \LL_{1,\mathrm{rel}}^p - \dim \LL_{2,\mathrm{abs}}^p  \\
& = \dim H^p(Z,F) - \dim H^p_\mathrm{rel}(Z_1,F) - \dim H^p_\mathrm{abs}(Z_2,F) .
\end{split}
\end{align}
By Theorems \ref{thm-ch1-Hodge},
\ref{thm-ch3-approx-harmonic-tot},
\ref{thm-ch3-approx-harmonic-bd},
\eqref{eq4-pf-lem-ch4-chi} is equivalent to
\begin{align}
\begin{split}
& \dim \LL_1^p \cap \LL_2^p  - \dim \LL_{1,\mathrm{rel}}^p - \dim \LL_{2,\mathrm{abs}}^p  \\
& = \dim \hhe^p(Z_{12,\infty},F) - \dim \hhe^p_\mathrm{rel}(Z_{1,\infty},F) - \dim \hhe^p_\mathrm{abs}(Z_{2,\infty},F),
\end{split}
\end{align}
which follows from \eqref{eq-ch2-ses-hh-bd} and \eqref{eq-ch3-ses-hh}.
The proof of Lemma \ref{lem-ch4-chi} is completed.
\end{proof}

\begin{lemma}
\label{lem-ch4-chi-abd}
The following identities holds,
\begin{align}
\begin{split}
\label{eq-lem-ch4-chi-abd}
\chi' & = \sum_p (-1)^p \dim \im(\delta_{p,\LL}), \\
\chi'(C_{12}) & = \sum_p (-1)^p \Big(\dim \im (\alpha_{p,\LL}) - \dim \im (\beta_{p,\LL})\Big).
\end{split}
\end{align}
\end{lemma}
\begin{proof}
By \eqref{eq-ch4-exact-grnt} and \eqref{eq1-pf-lem-ch4-chi},
we have
\begin{equation}
\label{eq1-pf-lem-ch4-chi-abd}
\dim \mathrm{Im}(\delta_{p,\LL}) =
h_p - x_p - v_p = y_p - u_p.
\end{equation}
From
\eqref{eq-ch4-exact-grnt},
\eqref{eq-lem-ch4-chi},
\eqref{eq0-pf-lem-ch4-chi},
\eqref{eq3-pf-lem-ch4-chi} and
\eqref{eq1-pf-lem-ch4-chi-abd},
we get \eqref{eq-lem-ch4-chi-abd}.
This completes the proof of Lemma \ref{lem-ch4-chi-abd}.
\end{proof}

For $R>2$, set
\begin{align}
\begin{split}
\Lambda^{*,p}_R & = \big\{ \lambda>0 \;:\; \det \big( e^{4i\lambda R}C^p_{12} -\Id \big) = 0 \big\}, \\
\Lambda^{*,p}_{j,R} & = \big\{ \lambda>0 \;:\; \det \big( e^{2i\lambda R}C^p_{j,\bd} -\Id \big) = 0 \big\} ,\hspace{5mm}\text{for } j = 1,2.
\end{split}
\end{align}

\begin{prop}
\label{prop-ch4-Lambda-star}
We have
\begin{equation}
\label{eq-prop-ch4-Lambda-star}
\Sp\big( D^{2,(p)}_{I_{j,R}} \big) \backslash \{0\} =
\big\{ \lambda^2 \;:\; \lambda\in\Lambda^{*,p}_{j,R} \big\}, \quad \text{for } j=0,1,2.
\end{equation}
\end{prop}
\begin{proof}
We may proceed in the same way as in \textsection \ref{ch3}
with $Z_{1,R}$, $Z_{2,R}$ and $F$
replaced by $I_{1,R}$, $I_{2,R}$ and $\hh(Y,F)$.
In particular,
if $\omega\in\Omega^\bullet(I_{j,R},\hh(Y,F))$
is an eigensection of $D^{2,(p)}_{I_{j,R}}$ associated with eigenvalue $\lambda^2$,
then
\begin{equation}
\label{eq1-pf-prop-ch4-Lambda-star}
\omega =
e^{-i\lambda u}\big(\phi^--i\cliffu\phi^-\big) +
e^{i\lambda u}\big(\phi^++i\cliffu\phi^+\big)
\end{equation}
with $\phi^\pm\in\hh(Y,F)$.
From the boundary condition (see \eqref{eq-ch4-dom-D} and \eqref{eq-ch4-dom-D12})
and \eqref{eq1-pf-prop-ch4-Lambda-star},
we get \eqref{eq-prop-ch4-Lambda-star}.
This completes the proof of Proposition \ref{prop-ch4-Lambda-star}.
\end{proof}

Let $\theta^{*}_{j,R}(s)$ ($j=0,1,2$)
be the zeta-type functions associated with $D^2_{I_{j,R}}$
(see Definition \ref{def-ch1-zeta}).

\begin{prop}
\label{prop-ch4-zeta-star}
The following identities hold,
\begin{align}
\begin{split}
\theta^{*}_{R}{}'(0) & =  \chi'(C_{12}) \log(2R) - \chi(Y)\mathrm{rk}(F) \log 2 \\
& \quad +
\sum_{p=0}^{\dim Y}  \frac{p}{2} (-1)^p \log {\det}^* \Big( \frac{2- C_{12}^p - (C_{12}^p)^{-1}}{4} \Big), \\
\theta^{*}_{j,R}{}'(0) & =  \chi'(C_{j,\bd})\log R - \chi(Y,F)\log 2, \quad \text{for } j=1,2.
\end{split}
\end{align}
\end{prop}
\begin{proof}
Both identities are consequences of
Proposition \ref{prop-ch4-Lambda-star}
and Proposition \ref{prop-Lambdazeta-der0} in the Appendix.
The first (resp. second) identity is a weighted sum of \eqref{eq-Lambdazeta-der0} with
$V$ replaced by $\hhe^p(Y,F)\oplus\hhe^{p-1}(Y,F)du$ and
$C$ replaced by $C^p_{12}$ (resp. $C^p_{j,\bd}$).
Since $\Sp\big(C^p_{j,\bd}\big)\subseteq\big\{-1,1\big\}$,
there is no $\log\det^*$ term in the second identity.
This completes the proof of Proposition \ref{prop-ch4-zeta-star}.
\end{proof}

\subsection{Small time contributions}
\label{ch4-2}

Henceforth we add a sub-index $0$ to objects associated with $Z_R$,
e.g., $Z_{0,R}=Z_{R}$, $\theta_{0,R}=\theta_R$.

We fix $\e\in(0,1)$.
For $j=0,1,2$,
we denote by $\theta^{S}_{j,R}(s)$ (resp. $\theta^{L}_{j,R}(s)$)
be the contribution of $\int_0^{R^{2-\e}}$ (resp. $\int_{R^{2-\e}}^\infty$)
to $\theta_{j,R}(s)$ in Definition \ref{def-ch1-zeta}.
Then we have
\begin{equation}
\theta_{j,R}(s) = \theta^S_{j,R}(s) + \theta^L_{j,R}(s).
\end{equation}
We define $\theta^{*S/L}_{j,R}$ ($j=0,1,2$) in the same way.

We denote
\begin{align}
\label{eq-ch4-def-theta}
\begin{split}
\Theta_R(t) & =  \sum_{j=0}^2 (-1)^{\frac{(j-1)(j-2)}{2}}
\tr\Big[(-1)^NN\exp\big(-t D^{F,2}_{Z_{j,R}}\big)\Big], \\
\Theta^*_R(t) & = \sum_{j=0}^2 (-1)^{\frac{(j-1)(j-2)}{2}}
\tr\Big[(-1)^NN\exp\big(-t D^2_{I_{j,R}}\big)\Big].
\end{split}
\end{align}
By Definition \ref{def-ch1-zeta},
and the constructions of $\theta^S_{j,R}(s)$, $\theta^{*S}_{j,R}(s)$
we have
\begin{equation}
\label{eq-ch4-s-zeta-theta}
\sum_{j=0}^2 (-1)^{\frac{(j-1)(j-2)}{2}} \big( \theta^S_{j,R}(s) - \theta^{*S}_{j,R}(s) \big)
=  - \frac{1}{\Gamma(s)} \int_0^{R^{2-\e}} t^{s-1} \big( \Theta_R(t) - \Theta^*_R(t) \big) dt.
\end{equation}

\begin{thm}
\label{thm-ch4-s-time}
There exists $a>0$ such that
\begin{equation}
\sum_{j=0}^2 (-1)^{(j-1)(j-2)/2} \big( \theta^S_{j,R}{}'(0) - \theta^{*S}_{j,R}{}'(0) \big)
= \mathscr{O}\big(e^{-aR^{\e/2}}\big).
\end{equation}
\end{thm}
\begin{proof}
We will follow \cite[\textsection 13(b)]{BL91}.

Let $f\in\smooth(\R)$ be an even function such that
$f(u)=1$ for $|u|\leqslant 1/2$ and
$f(u)=0$ for $|u|\geqslant 1$.
For $t,\varsigma>0$ and $z\in\C$,
set
\begin{align}
\label{eq1-pf-thm-ch4-s-time}
\begin{split}
F_{t,\varsigma}(z) & = \int_{-\infty}^{+\infty} e^{i\sqrt{2}vz} e^{-\frac{1}{2}v^2} f\big(\sqrt{\varsigma t}v\big) \frac{dv}{\sqrt{2\pi}} ,\\
G_{t,\varsigma}(z) & = \int_{-\infty}^{+\infty} e^{i\sqrt{2}vz} e^{-\frac{1}{2}v^2/t} \left( 1 - f\big(\sqrt{\varsigma}v\big) \right) \frac{dv}{\sqrt{2\pi t}} .\\
\end{split}
\end{align}
We have
\begin{equation}
\label{eq2-pf-thm-ch4-s-time}
F_{t,\varsigma}\big(\sqrt{t}z\big) + G_{t,\varsigma}\big(z\big) = \exp\big(-tz^2\big) .
\end{equation}
Let
\begin{align}
\label{eq3-pf-thm-ch4-s-time}
\begin{split}
& F_{t,\varsigma}\big(\sqrt{t}D^F_{Z_{j,R}}\big)(x,y), \quad
G_{t,\varsigma}\big(D^F_{Z_{j,R}}\big)(x,y) \\
& \quad \in
\left(\Lambda^\bullet\big(T^*Z_{j,R}\big) \otimes F\right)_x \otimes
\left(\Lambda^\bullet\big(T^* Z_{j,R}\big) \otimes F\right)^*_y,
\quad \text{where } x,y\in Z_R,
\end{split}
\end{align}
be the smooth kernel of operators $F_{t,\varsigma}\big(\sqrt{t}D^F_{Z_{j,R}}\big)$ and $G_{t,\varsigma}\big(D^F_{Z_{j,R}}\big)$
(with respect to the Riemannian volume form).

By the construction of $G_{t,\varsigma}(z)$,
for any $k\in\N$,
there exist $C>0$ and $a>0$ such that
for $t>0$, $0<\varsigma<1$ and $z\in\C$,
we have (cf. the argument of \cite[(1.6.16)]{MaMa07})
\begin{equation}
\label{eq4-pf-thm-ch4-s-time}
\big| z^k G_{t,\varsigma}(z) \big| \leqslant C e^{-a/{\varsigma t}} .
\end{equation}
Since $0<\varsigma<1$,
we have
\begin{equation}
\label{eq4a-pf-thm-ch4-s-time}
e^{-a/{\varsigma t}} \leqslant
e^{-a/{2t}} e^{-a/{2\varsigma t}} \leqslant
2a^{-1}te^{-a/{2\varsigma t}}.
\end{equation}
By \eqref{eq4-pf-thm-ch4-s-time} and
\eqref{eq4a-pf-thm-ch4-s-time},
for $k,k'\in\N$, there exist $C>0$ and $a>0$ such that
for $0<t<R^{2-\e}$ and $0<\varsigma<R^{-2+\e/2}$,
we have
\begin{equation}
\label{eq5-pf-thm-ch4-s-time}
\left\lVert D^{F,k}_{Z_{j,R}}
G_{t,\varsigma}\big( D^F_{Z_{j,R}} \big)
D^{F,k'}_{Z_{j,R}} \right\rVert^{0,0}
\leqslant C t e^{-a R^{\e/2}},
\end{equation}
where $\lVert\,\cdot\,\rVert^{0,0}$ is the operator norm associated with the $L^2$-norm.
By Proposition \ref{prop-ch3-sobolev}
and \eqref{eq5-pf-thm-ch4-s-time},
there exist $C>0$ and $a>0$ such that
for $0<t<R^{2-\e}$, $0<\varsigma<R^{-2+\e/2}$
and $x,y\in Z_{j,R}$,
we have
\begin{equation}
\label{eq6-pf-thm-ch4-s-time}
\big| G_{t,\varsigma}\big(D^F_{Z_{j,R}}\big)(x,y) \big|
\leqslant C t e^{-a R^{\e/2}}.
\end{equation}

In the rest of the proof,
we take $\varsigma = R^{-2+\e/3}$.
By the finite propagation speed of the wave equation for $D^F_{Z_{j,R}}$ (cf. \cite[Appendix D.2]{MaMa07}),
if the distance between $x$ and $y$ is greater than $\varsigma^{-1/2}$,
we have $F_{t,\varsigma}\big(\sqrt{t}D^F_{Z_{j,R}}\big)(x,y)=0$.
Moreover,
$F_{t,\varsigma}\big(\sqrt{t}D^F_{Z_{j,R}}\big)(x,\cdot)$
only depends on the restriction of $D^F_{Z_{j,R}}$
on the ball of radius $\varsigma^{-1/2}$ centered at $x$.
Thus, for $x\in Z_{j,R/2}\subseteq Z_{j,R}\subseteq Z_R$ ($j=1,2$),
we have
\begin{equation}
\label{eq7-pf-thm-ch4-s-time}
F_{t,\varsigma}\big(\sqrt{t}D^F_{Z_{j,R}}\big)(x,x) =
F_{t,\varsigma}\big(\sqrt{t}D^F_{Z_R}\big)(x,x).
\end{equation}
We may view $Y_{(-\frac{R}{2},\frac{R}{2})}\subset Z_R$ as a subset of $Y_\R$.
Let $D^F_{Y_\R}$ be the Hodge-de Rham operator on $\Omega^\bullet(Y_\R,F)$.
Let $\iota$ be the involution on $Y_\R$ sending $(u,y)$ to $(-u,y)$.
For $x\in Y_{(-\frac{R}{2},\frac{R}{2})}\cap Z_{j,R}$ ($j=1,2$) (cf. \eqref{eq-ch3-YR}),
by the argument in the proof of Proposition \ref{prop-ch3-F-ind-bd}
involving the involution $\iota$,
we have
\begin{align}
\label{eq8-pf-thm-ch4-s-time}
\begin{split}
F_{t,\varsigma}\big(\sqrt{t}D^F_{Z_{j,R}}\big)(x,x)
& = F_{t,\varsigma}\big(\sqrt{t}D^F_{Z^\mathrm{db}_{j,R}}\big)(x,x) +
(-1)^j F_{t,\varsigma}\big(\sqrt{t}D^F_{Z^\mathrm{db}_{j,R}}\big)(x,\iota x) \\
& = F_{t,\varsigma}\big(\sqrt{t}D^F_{Y_\R}\big)(x,x) +
(-1)^j F_{t,\varsigma}\big(\sqrt{t}D^F_{Y_\R}\big)(x,\iota x).
\end{split}
\end{align}
As a consequence,
for $x\in Y_{(-\frac{R}{2},\frac{R}{2})} \cap Z_{1,R} =Y_{(-\frac{R}{2},0)}$,
we have
\begin{align}
\label{eq9-pf-thm-ch4-s-time}
\begin{split}
& F_{t,\varsigma}\big(\sqrt{t}D^F_{Z_{1,R}}\big)(x,x) +
\iota^* F_{t,\varsigma}\big(\sqrt{t}D^F_{Z_{2,R}}\big)(\iota x,\iota x) \\
& = F_{t,\varsigma}\big(\sqrt{t}D^F_{Z_R}\big) (x,x) +
\iota^* F_{t,\varsigma}\big(\sqrt{t}D^F_{Z_R}\big)(\iota x,\iota x)
\in \End \left(\Lambda^\bullet\big(T^*Z_{j,R}\big) \otimes F\right)_x.
\end{split}
\end{align}

By \eqref{eq2-pf-thm-ch4-s-time},
we may decompose $\Theta_R(t)$ into the contributions of $F_{t,\varsigma}$ and $G_{t,\varsigma}$.
By \eqref{eq7-pf-thm-ch4-s-time} and \eqref{eq9-pf-thm-ch4-s-time},
the contribution of $F_{t,\varsigma}$ to \eqref{eq-ch4-s-zeta-theta}
vanishes identically.
By \eqref{eq6-pf-thm-ch4-s-time},
the contribution of $G_{t,\varsigma}$ to \eqref{eq-ch4-s-zeta-theta}
together with its derivative at $s=0$ is $\mathscr{O}\big(e^{-aR^{\e/2}}\big)$.
The same argument works for $\Theta^*_R(t)$.
The proof of Theorem \ref{thm-ch4-s-time} is completed.
\end{proof}

\subsection{Large time contributions and a proof of Theorem \ref{thm-ch0-1}}
\label{ch4-3}

By Definition \ref{def-ch1-zeta},
Lemma \ref{lem-ch4-chi} and
\eqref{eq-ch4-def-theta},
we have
\begin{equation}
\sum_{j=0}^2 (-1)^{(j-1)(j-2)/2} \big( \theta^L_{j,R}(s) - \theta^{*L}_{j,R}(s) \big)
=-\frac{1}{\Gamma(s)}\int_{R^{2-\e}}^{+\infty} t^{s-1} \big( \Theta_R(t) - \Theta^*_R(t) \big) dt .
\end{equation}
We fix $\kappa\in(\e,1)$.
We denote by $\Theta_R^I(t)$ (resp. $\Theta_R^{I\!I}(t)$)
the contribution of $\Sp\big(D^{F,2}_{Z_{j,R}}\big) \cap (0,R^{-2+\kappa})$
(resp. $\Sp\big(D^{F,2}_{Z_{j,R}}\big) \cap [R^{-2+\kappa},+\infty)$)
to $\Theta_R(t)$.
We define $\Theta^{*,I}_R(t)$ and $\Theta^{*,I\!I}_R(t)$ in the same way.

\begin{prop}
\label{prop-ch4-traceI}
We have
\begin{equation}
\label{eq-prop-ch4-traceI}
\int^{+\infty}_{R^{2-\e}} \Theta_R^{I\!I}(t) \frac{dt}{t} =
\mathscr{O}\big(e^{-\frac{1}{2}R^{\kappa-\e}}\big), \quad
\int^{+\infty}_{R^{2-\e}} \Theta^{*,I\!I}_R(t) \frac{dt}{t} =
\mathscr{O}\big(e^{-\frac{1}{2}R^{\kappa-\e}}\big).
\end{equation}
\end{prop}
\begin{proof}
Let $\{\lambda_k(R)\}$ be the set of eigenvalues of $D_{Z_{j,R}}^{F,2}$ ($j=0,1,2$)
such that $\lambda_k(R) \geqslant R^{-2+\kappa}$.
Then,
for $t\geqslant R^{2-\e}$,
we have
\begin{align}
\label{eq1-pf-prop-ch4-traceI}
\begin{split}
\left| \Theta^{I\!I}_R(t) \right| &
\leqslant n \sum_k e^{-t\lambda_k(R)}
\leqslant n e^{-(t-1)R^{-2+\kappa}}  \sum_k e^{-\lambda_k(R)} \\
& \leqslant n e^{-(t-1)R^{-2+\kappa}} \sum_{j=0}^2 \tr\big[\exp\big(- D^{F,2}_{Z_{j,R}}\big)\big].
\end{split}
\end{align}
Let $\exp\big(- D^{F,2}_{Z_{j,R}}\big)(x,y)$ ($x,y\in Z_{j,R}$)
be the smooth kernel of the operator  $\exp\big(- D^{F,2}_{Z_{j,R}}\big)$.
Proceeding in the same way as in the proof of Theorem \ref{thm-ch4-s-time},
there exists $a>0$ such that for any $x,y \in Z_{j,R}$,
$\left| \exp\big(- D^{F,2}_{Z_{j,R}}\big)(x,y) \right| \leqslant a$.
As a consequence,
there exist $a,b>0$, such that
\begin{equation}
\label{eq2-pf-prop-ch4-traceI}
\tr\big[\exp\big(- D^{F,2}_{Z_{j,R}}\big)\big]
\leqslant a \mathrm{Vol}(Z_{j,R})
\leqslant bR,
\quad \text{for } j=0,1,2.
\end{equation}
By \eqref{eq1-pf-prop-ch4-traceI}
and \eqref{eq2-pf-prop-ch4-traceI},
we get the first estimate in \eqref{eq-prop-ch4-traceI}.
The second one can be established in the same way.
The proof of Proposition \ref{prop-ch4-traceI} is completed.
\end{proof}

\begin{prop}
\label{prop-ch4-traceII}
We have
\begin{equation}
\label{eq-prop-ch4-traceII}
\int^{+\infty}_{R^{2-\e}} \big( \Theta_R^I(t) - \Theta^{*,I}_R(t) \big) \frac{dt}{t} =
\mathscr{O}\left(R^{\kappa-1}\right).
\end{equation}
\end{prop}
\begin{proof}
For $\lambda>0$,
we denote
\begin{equation}
\label{eq1-pf-prop-ch4-traceII}
e_R(\lambda) =
\int_{R^{2-\e}}^{+\infty} t^{-1}e^{-t\lambda}dt =
\int_{R^{2-\e}\lambda}^{+\infty} t^{-1} e^{-t} dt.
\end{equation}
By splitting the integral into $\int_1^{+\infty} + \int_{R^{2-\e}\lambda}^1$
(if $R^{2-\e}\lambda\leqslant 1$),
we have
\begin{equation}
\label{eq2-pf-prop-ch4-traceII}
\big|e_R(\lambda)\big|
\leqslant 1 + \max \big\{ -\log\big(R^{2-\e}\lambda\big),\; 0 \big\}, \quad
\big|e_R{}'(\lambda)\big|
\leqslant \lambda^{-1} .
\end{equation}

For a finite multiset $\Lambda\subseteq\R$,
we denote
$e_R[\Lambda] = \sum_{\lambda\in\Lambda}e_R(\lambda)$.
We have
\begin{align}
\label{eq3-pf-prop-ch4-traceII}
\begin{split}
& \int^{+\infty}_{R^{2-\e}} \big( \Theta_R^I(t) - \Theta^{*,I}_R(t) \big) \frac{dt}{t} \\
& = \sum_{j=0}^2 \sum_p (-1)^{\frac{(j-1)(j-2)}{2}+p}p \Big\{
e_R\big[\Sp \big( D^{F,2,(p)}_{Z_{j,R}} \big)\cap(0,R^{\kappa-2})\big] \\
& \hspace{50mm} - e_R\big[\Sp \big( D^{2,(p)}_{I_{j,R}} \big)\cap(0,R^{\kappa-2})\big] \Big\} .
\end{split}
\end{align}
By Theorems \ref{thm-ch3-s-eigen-degree} and \ref{thm-ch3-s-eigen-degree-bd},
we have
\begin{equation}
\label{eq4-pf-prop-ch4-traceII}
e_R\big[\Sp \big( D^{F,2,(p)}_{Z_R}\big)\cap(0,R^{\kappa-2})\big] =
\sum_{\rho\in\Lambda_R^p,\;0<|\rho|<R^{\kappa/2-1}} e_R(\rho^2) + \mathscr{O}(e^{-aR}).
\end{equation}
By Proposition \ref{prop-ch4-Lambda-star},
we have
\begin{equation}
\label{eq5-pf-prop-ch4-traceII}
e_R\big[\Sp \big( D^{2,(p)}_{I_R} \big)\cap\,(0,R^{\kappa-2})\big] =
\sum_{\lambda\in\Lambda_R^{*,p},\;0<|\lambda|<R^{\kappa/2-1}} e_R(\lambda^2).
\end{equation}
By Proposition \ref{lab-prop-Lambda-Lambda-star-estimation} in Appendix
and \eqref{eq2-pf-prop-ch4-traceII},
we have
\begin{equation}
\label{eq6-pf-prop-ch4-traceII}
\sum_{\rho\in\Lambda_R^p,\;0<|\rho|<R^{-1+\kappa/2}} e_R(\rho^2)
- \sum_{\lambda\in\Lambda_R^{*,p},\;0<|\lambda|<R^{-1+\kappa/2}} e_R(\lambda^2)
= \mathscr{O}\big(R^{\kappa-1}\big).
\end{equation}
By \eqref{eq3-pf-prop-ch4-traceII}-\eqref{eq6-pf-prop-ch4-traceII},
we get \eqref{eq-prop-ch4-traceII}.
The proof of Proposition \ref{prop-ch4-traceII} is completed.
\end{proof}

\begin{thm}
\label{thm-ch4-l-time}
We have
\begin{equation}
\sum_{j=0}^2 (-1)^{\frac{(j-1)(j-2)}{2}} \big( \theta^L_{j,R}{}'(0) - \theta^{*L}_{j,R}{}'(0) \big)
= \mathscr{O}\big(R^{\kappa-1}\big) .
\end{equation}
\end{thm}
\begin{proof}
It is a direct consequence of Propositions \ref{prop-ch4-traceI}, \ref{prop-ch4-traceII}.
\end{proof}

\begin{proof}[Proof of Theorem \ref{thm-ch0-1} : ]
It is a direct consequence of
Lemma \ref{lem-ch4-chi},
Proposition \ref{prop-ch4-zeta-star}
and Theorems \ref{thm-ch4-s-time}, \ref{thm-ch4-l-time}.
\end{proof}

\section{Asymptotics of torsions associated with the Mayer-Vietoris exact sequence}
\label{ch5}

In this section,
we study the asymptotics of the torsion
associated with the Mayer-Vietoris exact sequence.
In \textsection \ref{ch5-1},
we introduce a filtration of the Mayer-Vietoris exact sequence in question.
In \textsection \ref{ch5-2},
we estimate the $L^2$-metrics on the filtered Mayer-Vietoris exact sequence.
In \textsection \ref{ch5-3},
we estimate the maps in the quotient of the filtered Mayer-Vietoris exact sequence.
In \textsection \ref{ch5-4},
we establish the asymptotics of the torsion
associated with the Mayer-Vietoris exact sequence.
In \textsection \ref{ch5-5},
we prove Theorem \ref{thm-ch0-3}.

\subsection{A filtration of the Mayer-Vietoris exact sequence}
\label{ch5-1}

In the whole section,
we use the following identifications
\begin{equation}
H^\bullet(Z_R,F) = H^\bullet(Z,F), \quad
H^\bullet_\bd(Z_{j,R},F) = H^\bullet_\bd(Z_j,F), \quad \text{for } j=1,2,
\end{equation}
which are induced by
$\varphi_R : Z_R \rightarrow Z$ and
$\varphi_{j,R} : Z_{j,R} \rightarrow Z_j$
(see \eqref{eq-ch3-def-varphiR}
and \eqref{eq-ch3-def-varphiR-bd}).

Recall that the maps
$\mathscr{F}_{Z_R} : \hh(Z_{12,\infty},F) \rightarrow \hh(Z_R,F)$ and
$\mathscr{F}_{Z_{j,R}} : \hh_\bd(Z_{j,\infty},F) \rightarrow \hh_\bd(Z_{j,R},F)$
are defined by
\eqref{eq-ch3-def-FG-harmonic} and
\eqref{eq-ch3-def-FG-harmonic-bd}.
Let
\begin{align}
\begin{split}
\big[\mathscr{F}_{Z_R}\big] & : \;  \hh(Z_{12,\infty},F) \rightarrow H^\bullet(Z,F),\\
\big[\mathscr{F}_{Z_{j,R}}\big] & : \;  \hh_\bd(Z_{j,\infty},F) \rightarrow H^\bullet_\bd(Z_j,F)
\end{split}
\end{align}
be the compositions of
$\mathscr{F}_{Z_R}$, $\mathscr{F}_{Z_{j,R}}$
with the identification \eqref{eq-thm-ch1-Hodge-3}.
We denote
\begin{equation}
\label{eq-ch5-hhl-12}
\hhl(Z_{12,\infty},F) = \hhl(Z_{1,\infty},F) \oplus \hhl(Z_{2,\infty},F).
\end{equation}
Set
\begin{align}
\begin{split}
K^\bullet_{12} & =  \big[\mathscr{F}_{Z_R}\big] \big( \hhl(Z_{12,\infty},F) \big)
\subseteq H^\bullet(Z,F), \\
K^\bullet_j & = \big[\mathscr{F}_{Z_{j,R}}\big] \big( \hhl(Z_{j,\infty},F) \big)
\subseteq H^\bullet_{\bd}(Z_j,F), \quad
\text{for } j=1,2.
\end{split}
\end{align}
In this section,
$\dashrightarrow$ in a commutative diagram
means the unique map making the diagram commutative.
The following diagram commutes,
\begin{align}
\label{eq-ch5-K}
\begin{split}
\xymatrix{
0 \ar[r] &
\hhel^p(Z_{1,\infty},F) \ar[r]\ar[d]^{\left[\mathscr{F}_{Z_{1,R}}\right]} &
\hhel^p(Z_{12,\infty},F) \ar[r]\ar[d]^{\left[\mathscr{F}_{Z_R}\right]} &
\hhel^p(Z_{2,\infty},F) \ar[r]\ar[d]^{\left[\mathscr{F}_{Z_{2,R}}\right]} &
0 \\
0 \ar[r] &
\hspace{2mm} K^p_1 \hspace{2mm} \ar@{-->}[r] &
\hspace{2mm} K^p_{12} \hspace{2mm} \ar@{-->}[r] &
\hspace{2mm} K^p_2 \hspace{2mm} \ar[r] &
0,
}
\end{split}
\end{align}
where the maps in the first row are the canonical injection/projection.
By Propositions \ref{prop-ch3-F-ind}, \ref{prop-ch3-F-ind-bd},
$K^\bullet_1$, $K^\bullet_2$ and $K^\bullet_{12}$
together the maps between them
are independent of $R$.
Set
\begin{equation}
L_{12}^\bullet = H^\bullet(Z,F)/K^\bullet_{12}, \quad
L_{j,\bd}^\bullet = H^\bullet_{\bd}(Z_j,F)/K^\bullet_j, \quad
\text{for } j=1,2.
\end{equation}

\begin{prop}
\label{prop-ch5-filt-mv}
We have the following commutative diagram with exact rows and columns,
\begin{align}
\label{eq-ch5-filt-mv}
\begin{split}
\xymatrix{
& 0 \ar[d] & 0 \ar[d] & 0 \ar[d] & \\
\cdots \ar[r] &
K^p_1 \ar[r]\ar[d] &
K^p_{12} \ar[r]\ar[d] &
K^p_2  \ar[r]\ar[d] & \cdots \\
\cdots \ar[r] &
H^p_\mathrm{rel}(Z_1,F) \ar[r]^{\alpha_p}\ar[d] &
H^p(Z,F)\ar[r]^{\beta_p} \ar[d] &
H^p_\mathrm{abs}(Z_2,F) \ar[r]^{\hspace{5mm}\delta_p}\ar[d] &
\cdots \\
\cdots \ar@{-->}[r] &
L_{1,\mathrm{rel}}^p \ar@{-->}[r]^{\bar{\alpha}_p}\ar[d] &
L_{12}^p \ar@{-->}[r]^{\bar{\beta}_p}\ar[d] &
L_{2,\mathrm{abs}}^p \ar@{-->}[r]^{\bar{\delta}_p}\ar[d] &
\cdots \\
& 0 & 0 & 0  & ,
}
\end{split}
\end{align}
where
the vertical maps are canonical injection/projection,
the first row is given by \eqref{eq-ch5-K},
the second row is the Mayer-Vietoris exact sequence.
\end{prop}
\begin{proof}
We show that the upper left square commutes.
It is equivalent to show that
\begin{equation}
\label{eq1-pf-prop-ch5-filt-mv}
\alpha_p \left( \big[\mathscr{F}_{Z_{1,R}}(\omega,0)\big] \right) =
\big[\mathscr{F}_{Z_R}(\omega,0,0)\big] \in H^p(Z,F), \quad
\text{for } \omega\in\hhel^p(Z_{1,\infty},F).
\end{equation}
By \eqref{eq-ch3-def-FG} and \eqref{eq-ch3-def-FG-bd},
we have
\begin{equation}
\label{eq2-pf-prop-ch5-filt-mv}
F_{Z_R}(\omega,0,0) \big|_{Z_{1,R}} = F_{Z_{1,R}}(\omega,0), \quad
F_{Z_R}(\omega,0,0) \big|_{Z_{2,R}} = 0.
\end{equation}
By Proposition \ref{prop-ch1-mv}
and \eqref{eq2-pf-prop-ch5-filt-mv},
we have
\begin{equation}
\label{eq3-pf-prop-ch5-filt-mv}
\alpha_p \left( \big[ F_{Z_{1,R}}(\omega,0) \big] \right)
= \big[ F_{Z_R}(\omega,0,0) \big] \in H^p(Z,F).
\end{equation}
By \eqref{eq-ch3-def-FG-harmonic}
and \eqref{eq-ch3-def-FG-harmonic-bd},
we have
\begin{align}
\label{eq4-pf-prop-ch5-filt-mv}
\begin{split}
\big[ \mathscr{F}_{Z_{1,R}}(\omega,0) \big]
& = \big[ F_{Z_{1,R}}(\omega,0) \big] \in H^p_\mathrm{rel}(Z_1,F),\\
\big[ \mathscr{F}_{Z_R}(\omega,0,0) \big]
& = \big[ F_{Z_R}(\omega,0,0) \big] \in H^p(Z,F).
\end{split}
\end{align}
Then \eqref{eq1-pf-prop-ch5-filt-mv} follows from
\eqref{eq3-pf-prop-ch5-filt-mv} and
\eqref{eq4-pf-prop-ch5-filt-mv}.

Proceeding in the same way,
we can show that the upper right square commutes.

Now we show that $\delta_p\big(K^p_2\big)=0$.
It is equivalent to show that
\begin{equation}
\label{eq21-pf-prop-ch5-filt-mv}
\delta_p \left( \big[\mathscr{F}_{Z_{2,R}}(\omega,0)\big] \right) =
0 \in H^{p+1}_\mathrm{rel}(Z_1,F), \quad
\text{for } \omega\in\hhel^p(Z_{2,\infty},F).
\end{equation}
By \eqref{eq-ch3-def-FG-bd},
we have
\begin{equation}
\label{eq22-pf-prop-ch5-filt-mv}
F_{Z_{2,R}}(\omega,0) \big|_{Z_{1,R}} = 0.
\end{equation}
By Proposition \ref{prop-ch1-mv}
and \eqref{eq22-pf-prop-ch5-filt-mv},
we have
\begin{equation}
\label{eq23-pf-prop-ch5-filt-mv}
\delta_p \left( \big[F_{Z_{2,R}}(\omega,0)\big] \right) =
0 \in H^{p+1}_\mathrm{rel}(Z_1,F).
\end{equation}
By \eqref{eq-ch3-def-FG-harmonic-bd},
we have
\begin{equation}
\label{eq24-pf-prop-ch5-filt-mv}
\big[\mathscr{F}_{Z_{2,R}}(\omega,0)\big] =
\big[F_{Z_{2,R}}(\omega,0)\big] \in H^p_\mathrm{abs}(Z_2,F).
\end{equation}
Then \eqref{eq21-pf-prop-ch5-filt-mv} follows from
\eqref{eq23-pf-prop-ch5-filt-mv} and \eqref{eq24-pf-prop-ch5-filt-mv}.

The rest follows from a diagram chasing argument.
This completes the proof of Proposition \ref{prop-ch5-filt-mv}.
\end{proof}

Let $\LL^\bullet_j$ ($j=1,2$)
be the set of limiting values (see \eqref{eq-ch2-def-LL})
of $\hh(Z_{j,\infty},F)$.
Let $\LL^\bullet_{j,\mathrm{abs/rel}}\subseteq\LL^\bullet_j$
be the absolute/relative component (see \eqref{eq-ch2-def-LL-bd})
of $\LL^\bullet_j$.
We continue to use the convention
$\LL^\bullet_{1,\mathrm{bd}}=\LL^\bullet_{1,\mathrm{rel}}$ and
$\LL^\bullet_{2,\mathrm{bd}}=\LL^\bullet_{2,\mathrm{abs}}$
We have the following commutative diagram with exact rows,
\begin{align}
\label{eq-ch5-filt-H-bd}
\begin{split}
\xymatrix{
0 \ar[r] &
\hhel^p(Z_{j,\infty},F) \ar[r]\ar[d]^{\left[\mathscr{F}_{Z_{j,R}}\right]} &
\hhe^p_\mathrm{bd}(Z_{j,\infty},F) \ar[r]\ar[d]^{\left[\mathscr{F}_{Z_{j,R}}\right]} &
\LL_{j,\mathrm{bd}}^p \ar[r]\ar@{-->}[d] & 0 \\
0 \ar[r] & K^p_j \ar[r] & H^p_\mathrm{bd}(Z_j,F) \ar[r] & L^p_{j,\mathrm{bd}} \ar[r] & 0,
}
\end{split}
\end{align}
where the first row is defined by \eqref{eq-ch2-ses-hh-bd},
the second row consists of canonical injection/projection.
We also have the following commutative diagram with exact rows,
\begin{align}
\label{eq-ch5-filt-H}
\begin{split}
\xymatrix{
0 \ar[r] &
\hhel^p(Z_{12,\infty},F) \ar[r]\ar[d]^{\left[\mathscr{F}_{Z_R}\right]} &
\hhe^p(Z_{12,\infty},F) \ar[r]\ar[d]^{\left[\mathscr{F}_{Z_R}\right]} &
\LL_1^p \cap \LL_2^p \ar[r]\ar@{-->}[d] & 0 \\
0 \ar[r] & K^p_{12} \ar[r] & H^p(Z,F) \ar[r] & L^p_{12} \ar[r] & 0,
}
\end{split}
\end{align}
where the first row is defined by \eqref{eq-ch3-filt-hh-12}
and \eqref{eq-ch5-hhl-12},
the second row consists of canonical injection/projection.
Finally we get the following commutative diagram with exact rows and columns,
\begin{align}
\label{eq-ch5-filt-mv-harmonic}
\begin{split}
\xymatrix{
& 0 \ar[d] & 0 \ar[d] & 0 \ar[d] & \\
\cdots \ar[r] &
\hhel^p(Z_{1,\infty},F) \ar[r]\ar[d] &
\hhel^p(Z_{12,\infty},F) \ar[r]\ar[d] &
\hhel^p(Z_{2,\infty},F)  \ar[r]\ar[d] &
\cdots \\
\cdots \ar[r] &
\hhe^p_\mathrm{rel}(Z_{1,\infty},F) \ar[r]^{\alpha_p(R)}\ar[d] &
\hhe^p(Z_{12,\infty},F) \ar[r]^{\beta_p(R)}\ar[d] &
\hhe^p_\mathrm{abs}(Z_{2,\infty},F) \ar[r]^{\hspace{10mm}\delta_p(R)}\ar[d] &
\cdots \\
\cdots \ar@{-->}[r] &
\LL_{1,\mathrm{rel}}^p \ar@{-->}[r]^{\bar{\alpha}_p(R)}\ar[d] &
\LL_1^p \cap \LL_2^p \ar@{-->}[r]^{\bar{\beta}_p(R)}\ar[d] &
\LL_{2,\mathrm{abs}}^p \ar@{-->}[r]^{\hspace{5mm}\bar{\delta}_p(R)}\ar[d] &
\cdots \\
& 0 & 0 & 0  &
}
\end{split}
\end{align}
where the columns are defined by \eqref{eq-ch5-filt-H-bd} and \eqref{eq-ch5-filt-H},
the first row consists of canonical injection/projection,
the maps in the second row is given by
\begin{align}
\label{eq-ch5-def-alpha-beta-delta-R}
\begin{split}
& \alpha_p(R) = \big[\mathscr{F}_{Z_R}\big]^{-1} \circ \alpha_p \circ \big[\mathscr{F}_{Z_{1,R}}\big], \quad
\beta_p(R) = \big[\mathscr{F}_{Z_{2,R}}\big]^{-1} \circ \beta_p \circ \big[\mathscr{F}_{Z_R}\big], \\
& \delta_p(R) = \big[\mathscr{F}_{Z_{1,R}}\big]^{-1} \circ \delta_p \circ \big[\mathscr{F}_{Z_{2,R}}\big].
\end{split}
\end{align}

\subsection{Asymptotics of $L^2$-metrics}
\label{ch5-2}

Recall that $\big\lVert\,\cdot\,\big\rVert_{Z_R}$
(resp. $\big\lVert\,\cdot\,\big\rVert_{Z_{j,R}}$ with $j=1,2$)
is the $L^2$-metric on $\Omega^\bullet(Z_R,F)$
(resp. $\Omega^\bullet(Z_{j,R},F)$).
Recall that the metrics
$\big\lVert \,\cdot\, \big\rVert_{\hh(Z_{12,\infty},F),R}$ and
$\big\lVert \,\cdot\, \big\rVert_{\hh_\mathrm{bd}(Z_{j,\infty},F),R}$
are defined by \eqref{eq-ch3-def-metric-hh} and \eqref{eq-ch3-def-metric-hh-bd}.

\begin{prop}
\label{prop-ch5-metric-H}
There exists $a>0$ such that
\begin{align}
\begin{split}
\big\lVert \mathscr{F}_{Z_R}(\cdot)  \big\rVert_{Z_R}
& = \Big( 1 + \mathscr{O}\big(e^{-aR}\big) \Big)
\big\lVert \,\cdot\, \big\rVert_{\hh(Z_{12,\infty},F),R}, \\
\big\lVert \mathscr{F}_{Z_{j,R}}(\cdot)  \big\rVert_{Z_{j,R}}
& = \Big( 1 + \mathscr{O}\big(e^{-aR}\big) \Big)
\big\lVert \,\cdot\, \big\rVert_{\hh_\mathrm{bd}(Z_{j,\infty},F),R},\quad\text{for } j=1,2.
\end{split}
\end{align}
\end{prop}
\begin{proof}
They are direct consequences of
Propositions
\ref{prop-ch3-approx-f-g-total},
\ref{prop-ch3-approx-harmonic-tot-inj},
\ref{prop-ch3-approx-f-g-bd},
\ref{prop-ch3-approx-harmonic-bd-inj}.
\end{proof}

Let $\big\lVert\,\cdot\,\big\rVert_{\LL_1^\bullet \cap \LL_2^\bullet, R}$
be the quotient metric on $\LL_1^\bullet \cap \LL_2^\bullet$
induced by $\big\lVert\,\cdot\,\big\rVert_{\hh(Z_{12,\infty},F),R}$
via the map $\hh(Z_{12,\infty},F)\rightarrow\LL_1^\bullet \cap \LL_2^\bullet$
in \eqref{eq-ch5-filt-H}.
Let $\big\lVert\,\cdot\,\big\rVert_{\LL_1^\bullet \cap \LL_2^\bullet}$
be the metric on $\LL_1^\bullet \cap \LL_2^\bullet$
induced by the $L^2$-metric on $\hh(Y,F[du])$
via the inclusion $\LL_1^\bullet \cap \LL_2^\bullet\subseteq\hh(Y,F[du])$.
In the same way,
we construct metrics
$\big\lVert\,\cdot\,\big\rVert_{\LL_{j,\bd}^\bullet,R}$ and
$\big\lVert\,\cdot\,\big\rVert_{\LL_{j,\bd}^\bullet}$
on $\LL_{j,\bd}^\bullet$.

\begin{prop}
\label{prop-ch5-metric-L}
We have
\begin{align}
\label{eq-prop-ch5-metric-L}
\begin{split}
\big\lVert\,\cdot\,\rVert^2_{\LL_1^\bullet \cap \LL_2^\bullet, R}
& = \Big( 2R + \mathscr{O}(1) \Big)
\big\lVert\,\cdot\,\big\rVert^2_{\LL_1^\bullet \cap \LL_2^\bullet}, \\
\big\lVert\,\cdot\,\big\rVert^2_{\LL_{j,\bd}^\bullet, R}
& = \Big( R + \mathscr{O}(1) \Big)
\big\lVert\,\cdot\,\big\rVert^2_{\LL_{j,\bd}^\bullet},
\quad \text{for } j=1,2.
\end{split}
\end{align}
\end{prop}
\begin{proof}
We only prove the second identity with $j=2$.

By the definition of quotient metric,
for $\hat{\omega}\in\LL_{2,\mathrm{abs}}^\bullet$,
we have
\begin{equation}
\label{eq1-pf-prop-ch5-metric-L}
\big\lVert \hat{\omega} \big\rVert^2_{\LL_{2,\mathrm{abs}}^\bullet, R}
= \inf_{(\omega,\hat{\omega})\in\hh_\mathrm{abs}(Z_{2,\infty},F)}
\big\lVert (\omega,\hat{\omega}) \big\rVert^2_{\hh_\mathrm{abs}(Z_{2,\infty},F),R}.
\end{equation}
For $(\omega,\hat{\omega})\in\hh_\mathrm{abs}(Z_{2,\infty},F)$,
by \eqref{eq-ch2-decomp-zm-nz},
we have the decomposition
$\omega\big|_{Y_{(-\infty,0]}} =
\omega^\mathrm{zm}+\omega^\mathrm{nz}$.
Since $\pi^*_Y\hat{\omega} = \omega^\mathrm{zm}$,
we have
\begin{align}
\label{eq2-pf-prop-ch5-metric-L}
\begin{split}
\big\lVert(\omega,\hat{\omega})\big\rVert^2_{\hh_\mathrm{abs}(Z_{2,\infty},F),R}
= \big\lVert\omega\big\rVert^2_{Z_{2,R}}
& = \big\lVert\omega \big\rVert^2_{Z_{2,0}} + \big\lVert\omega^\mathrm{nz}\big\rVert^2_{Y_{[-R,0]}}
+ \big\lVert\omega^\mathrm{zm}\big\rVert^2_{Y_{[-R,0]}}\\
& = \big\lVert\omega \big\rVert^2_{Z_{2,0}} + \big\lVert\omega^\mathrm{nz}\big\rVert^2_{Y_{[-R,0]}}
+ R \big\lVert\hat{\omega}\big\rVert^2_{\LL_{2,\mathrm{abs}}^\bullet}.
\end{split}
\end{align}
By \eqref{eq1-pf-prop-ch5-metric-L}
and \eqref{eq2-pf-prop-ch5-metric-L},
it remains to show that
for $\hat{\omega}\in\LL_{2,\bd}^\bullet$,
there exists $(\omega,\hat{\omega})\in\hh_\bd(Z_{2,\infty},F)$ such that
\begin{equation}
\label{eq3-pf-prop-ch5-metric-L}
\big\lVert\omega\big\rVert^2_{Z_{2,0}} + \big\lVert\omega^\mathrm{nz}\big\rVert^2_{Y_{[-R,0]}}
= \mathscr{O}(1)
\big\lVert\hat{\omega}\big\rVert^2_{\LL_{2,\mathrm{abs}}^\bullet}.
\end{equation}
For $\hat{\omega}\in\LL_{2,\mathrm{abs}}^\bullet$,
we choose $(\omega,\hat{\omega})\in\hh_\mathrm{abs}(Z_{2,\infty},F)$
such that $\omega$ is a generalized eigensection of $D^F_{Z_{2,\infty},\mathrm{ac}}$ associated with  $0$.
The existence of such a $\omega$ is guaranteed by \eqref{eq-ch2-decomp-hh}.
By \eqref{eq-ch2-nz-decreasing},
the last part of Proposition \ref{prop-ch2-g-eigen-Dinf},
i.e., $\hh(Y,F)\ni \phi\mapsto E(\phi,0)\in\hh(Z_{2,\infty},F)$ is injective,
and the fact that $\dim \hh(Y,F)$ is finite,
we get \eqref{eq3-pf-prop-ch5-metric-L}.
The proof of Proposition \ref{prop-ch5-metric-L} is completed.
\end{proof}


\subsection{Asymptotics of horizontal maps}
\label{ch5-3}

For ease of notation,
we denote by $\big\lVert\,\cdot\,\big\rVert_{\mathscr{H},R}$
one of the following metrics:
$\big\lVert\,\cdot\,\big\rVert_{\hh_\mathrm{rel}(Z_{1,\infty},F),R}$,
$\big\lVert\,\cdot\,\big\rVert_{\hh(Z_{12,\infty},F),R}$,
$\big\lVert\,\cdot\,\big\rVert_{\hh_\mathrm{abs}(Z_{2,\infty},F),R}$.
We denote by $\big\langle\cdot,\cdot\big\rangle_{\mathscr{H},R}$
the scalar product associated with $\big\lVert\,\cdot\,\big\rVert_{\mathscr{H},R}$.

\begin{prop}
\label{prop-ch5-map-H}
There exists $a>0$
such that for
$(\omega,\hat{\omega})\in\hhe^p_\mathrm{rel}(Z_{1,\infty},F)$,
$(\mu_1,\mu_2,\hat{\mu})\in\hhe^p(Z_{12,\infty},F)$,
$(\tau,\hat{\tau})\in\hhe^p_\mathrm{abs}(Z_{2,\infty},F)$ and
$(\sigma,\hat{\sigma})\in\hhe^{p+1}_\mathrm{rel}(Z_{1,\infty},F)$,
we have
\begin{align}
\label{eq-prop-ch5-map-H}
\begin{split}
& \big\langle \alpha_p(R)(\omega,\hat{\omega}),(\mu_1,\mu_2,\hat{\mu}) \big\rangle_{\mathscr{H},R} \\
& = \big\langle\omega,\mu_1\big\rangle_{Z_{1,R}}
+ \mathscr{O}\big(e^{-aR}\big)
\big\lVert(\omega,\hat{\omega})\big\rVert_{\mathscr{H},R}
\big\lVert(\mu_1,\mu_2,\hat{\mu})\big\rVert_{\mathscr{H},R}, \\
& \big\langle \beta_p(R)(\mu_1,\mu_2,\hat{\mu}),(\tau,\hat{\tau}) \big\rangle_{\mathscr{H},R} \\
& = \big\langle\mu_2,\tau\big\rangle_{Z_{2,R}}
+ \mathscr{O}\big(e^{-aR}\big)
\big\lVert(\mu_1,\mu_2,\hat{\mu})\big\rVert_{\mathscr{H},R}
\big\lVert(\tau,\hat{\tau})\big\rVert_{\mathscr{H},R}, \\
& \big\langle \delta_p(R)(\tau,\hat{\tau}),(\sigma,\hat{\sigma}) \big\rangle_{\mathscr{H},R} \\
& = \big\langle \hat{\tau},\cntrtu\hat{\sigma} \big\rangle_Y
+ \mathscr{O}\big(e^{-aR}\big)
\big\lVert(\tau,\hat{\tau})\big\rVert_{\mathscr{H},R}
\big\lVert(\sigma,\hat{\sigma})\big\rVert_{\mathscr{H},R}.
\end{split}
\end{align}
\end{prop}
\begin{proof}
We only prove the first identity in \eqref{eq-prop-ch5-map-H}.

We denote
\begin{equation}
\label{eq0-pf-prop-ch5-map-H}
\alpha_p(R)(\omega,\hat{\omega}) = (\eta_1,\eta_2,\hat{\eta})
\in \hhe^p(Z_{12,\infty},F).
\end{equation}
By \eqref{eq-ch5-def-alpha-beta-delta-R}
and \eqref{eq0-pf-prop-ch5-map-H},
we have
\begin{equation}
\label{eq1-pf-prop-ch5-map-H}
\alpha_p\left(\big[\mathscr{F}_{Z_{1,R}}(\omega,\hat{\omega})\big]\right) =
\big[\mathscr{F}_{Z_R}(\eta_1,\eta_2,\hat{\eta})\big] \in H^p(Z,F).
\end{equation}
By Proposition \ref{prop-ch1-mv-harmonic}
and \eqref{eq1-pf-prop-ch5-map-H},
we have
\begin{equation}
\label{eq2-pf-prop-ch5-map-H}
\big\langle \mathscr{F}_{Z_R}(\eta_1,\eta_2,\hat{\eta}),\mathscr{F}_{Z_R}(\mu_1,\mu_2,\hat{\mu}) \big\rangle_{Z_R} =
\big\langle \mathscr{F}_{Z_{1,R}}(\omega,\hat{\omega}),\mathscr{F}_{Z_R}(\mu_1,\mu_2,\hat{\mu}) \big\rangle_{Z_{1,R}}.
\end{equation}
Taking $(\mu_1,\mu_2,\hat{\mu})=(\eta_1,\eta_2,\hat{\eta})$
in \eqref{eq2-pf-prop-ch5-map-H}
and applying Cauchy-Schwarz inequality,
we get
\begin{equation}
\label{eq3-pf-prop-ch5-map-H}
\big\lVert \mathscr{F}_{Z_R}(\eta_1,\eta_2,\hat{\eta}) \big\rVert_{Z_R} \leqslant
\big\lVert \mathscr{F}_{Z_{1,R}}(\omega,\hat{\omega}) \big\rVert_{Z_{1,R}}.
\end{equation}
By Propositions
\ref{prop-ch3-approx-f-g-total},
\ref{prop-ch3-approx-harmonic-tot-inj},
\ref{prop-ch3-approx-f-g-bd},
\ref{prop-ch3-approx-harmonic-bd-inj}
and \eqref{eq3-pf-prop-ch5-map-H},
we get
\begin{equation}
\label{eq4-pf-prop-ch5-map-H}
\big\lVert (\eta_1,\eta_2,\hat{\eta}) \big\rVert_{\mathscr{H},R}
= \mathscr{O}(1)
\big\lVert (\omega,\hat{\omega}) \big\rVert_{\mathscr{H},R}.
\end{equation}
By Proposition \ref{prop-ch5-metric-H},
we have
\begin{align}
\label{eq5-pf-prop-ch5-map-H}
\begin{split}
& \big\langle \mathscr{F}_{Z_R}(\eta_1,\eta_2,\hat{\eta}),\mathscr{F}_{Z_R}(\mu_1,\mu_2,\hat{\mu}) \big\rangle_{Z_R}  \\
& = \big\langle(\eta_1,\eta_2,\hat{\eta}),(\mu_1,\mu_2,\hat{\mu})\big\rangle_{\mathscr{H},R}
+ \mathscr{O}\big(e^{-aR}\big)
\big\lVert(\eta_1,\eta_2,\hat{\eta})\big\rVert_{\mathscr{H},R}
\big\lVert(\mu_1,\mu_2,\hat{\mu})\big\rVert_{\mathscr{H},R},
\end{split}
\end{align}
By Propositions
\ref{prop-ch3-approx-f-g-total},
\ref{prop-ch3-approx-harmonic-tot-inj},
\ref{prop-ch3-approx-f-g-bd},
\ref{prop-ch3-approx-harmonic-bd-inj},
we have
\begin{align}
\label{eq6-pf-prop-ch5-map-H}
\begin{split}
& \big\langle\mathscr{F}_{Z_{1,R}}(\omega,\hat{\omega}),\mathscr{F}_{Z_R}(\mu_1,\mu_2,\hat{\mu})\big\rangle_{Z_{1,R}} \\
& = \big\langle\omega,\mu_1\big\rangle_{Z_{1,R}}
+ \mathscr{O}\big(e^{-aR}\big)
\big\lVert(\omega,\hat{\omega})\big\rVert_{\mathscr{H},R}
\big\lVert(\mu_1,\mu_2,\hat{\mu})\big\rVert_{\mathscr{H},R}.
\end{split}
\end{align}
By \eqref{eq0-pf-prop-ch5-map-H},
\eqref{eq2-pf-prop-ch5-map-H} and
\eqref{eq4-pf-prop-ch5-map-H}-\eqref{eq6-pf-prop-ch5-map-H},
we get the first identity in \eqref{eq-prop-ch5-map-H}.
The proof of Proposition \ref{prop-ch5-map-H} is completed.
\end{proof}

Now we compare the third row in \eqref{eq-ch5-filt-mv-harmonic}
with \eqref{eq-ch4-mv-LL}.

\begin{prop}
\label{prop-ch5-map-L}
We have
\begin{align}
\label{eq-prop-ch5-map-L}
\begin{split}
& \bar{\alpha}_p(R) = \frac{1}{2}\alpha_{p,\LL} + \mathscr{O}\big(R^{-1/2}\big), \hspace{10mm}
\bar{\beta}_p(R) =  \beta_{p,\LL} + \mathscr{O}\big(R^{-1/2}\big), \\
& \bar{\delta}_p(R) = R^{-1}\delta_{p,\LL} + \mathscr{O}\big(R^{-3/2}\big).
\end{split}
\end{align}
\end{prop}
\begin{proof}
We only prove the first identity in \eqref{eq-prop-ch5-map-L}.

Let $\hat{\omega}\in\LL_{1,\mathrm{rel}}^p$.
By \eqref{eq-ch2-decomp-hh},
there exists $(\omega,\hat{\omega}) \in \hhe^p_\mathrm{rel}(Z_{1,\infty},F)$
such that $\omega$ is a generalized eigensection.
We will use the same notation as in \eqref{eq0-pf-prop-ch5-map-H}.
By \eqref{eq-ch5-filt-mv-harmonic}
and \eqref{eq0-pf-prop-ch5-map-H},
we have
\begin{equation}
\label{eq2-pf-prop-ch5-map-L}
\bar{\alpha}_p(R)(\hat{\omega}) = \hat{\eta}.
\end{equation}
By Proposition \ref{prop-ch5-metric-L},
the first identity in \eqref{eq-prop-ch5-map-L} is equivalent to
\begin{equation}
\label{eq3-pf-prop-ch5-map-L}
\big\lVert \hat{\eta} - \frac{1}{2}\alpha_{p,\LL}(\hat{\omega}) \big\rVert^2_{\LL_1^p \cap \LL_2^p,R}
= \mathscr{O}\big(R^{-1}\big)
\big\lVert \hat{\omega} \big\rVert^2_Y .
\end{equation}
By \eqref{eq-ch2-decomp-hh},
there exists $(\eta_1',\eta_2',\hat{\eta}')\in\hhe^p(Z_{12,\infty},F)$
such that $\eta_1'$ and $\eta_2'$ are generalized eigensections and
\begin{equation}
\label{eq4-pf-prop-ch5-map-L}
\hat{\eta}' = \frac{1}{2}\alpha_{p,\LL}(\hat{\omega}).
\end{equation}
Since $\big\lVert\,\cdot\,\big\rVert_{\LL_1^p \cap \LL_2^p,R}$
is the quotient metric of $\big\lVert\,\cdot\,\big\rVert_{\mathscr{H},R}$,
it is sufficient to show that
\begin{equation}
\label{eq5-pf-prop-ch5-map-L}
\big\lVert (\eta_1,\eta_2,\hat{\eta}) - (\eta_1',\eta_2',\hat{\eta}') \big\rVert^2_{\mathscr{H},R}
= \mathscr{O}\big(R^{-1}\big)
\big\lVert \hat{\omega} \big\rVert^2_Y.
\end{equation}
By Riesz representation theorem,
it is sufficient to show that
for $(\mu_1,\mu_2,\hat{\mu})\in\hhe^p(Z_{12,\infty},F)$,
\begin{equation}
\label{eq6-pf-prop-ch5-map-L}
\big\langle (\eta_1,\eta_2,\hat{\eta}) - (\eta_1',\eta_2',\hat{\eta}'),(\mu_1,\mu_2,\hat{\mu}) \big\rangle_{\mathscr{H},R}
= \mathscr{O}\big(R^{-1/2}\big)
\big\lVert \hat{\omega} \big\rVert_Y
\big\lVert (\mu_1,\mu_2,\hat{\mu}) \big\rVert_{\mathscr{H},R}.
\end{equation}

Let
\begin{equation}
\label{eq11-pf-prop-ch5-map-L}
\mu_1 = \mu_1^\mathrm{pp} + \mu_1^\mathrm{ac}
\end{equation}
such that $\mu_1^\mathrm{pp}$
(resp. $\mu_1^\mathrm{ac}$)
is a $L^2$-eigensection
(resp. generalized eigensection)
of $D^F_{Z_{1,\infty}}$.
Since $\omega$ and $\mu_1^\mathrm{ac}$ are both generalized eigensections,
using \eqref{eq-ch2-nz-decreasing}
in the same way as in the proof of Proposition \ref{prop-ch5-metric-L}
we get from
\eqref{eq-ch2-def-zm-nz-1} and \eqref{eq-ch3-def-metric-hh}
that
\begin{align}
\label{eq12-pf-prop-ch5-map-L}
\begin{split}
\big\langle\omega,\mu_1^\mathrm{ac}\big\rangle_{Z_{1,R}}
& = R \big\langle \hat{\omega},\hat{\mu} \big\rangle_Y
+ \mathscr{O}(1)
\big\lVert\hat{\omega}\big\rVert_Y \big\lVert\hat{\mu}\big\rVert_Y \\
& = R \big\langle \hat{\omega},\hat{\mu} \big\rangle_Y
+ \mathscr{O}\big(R^{-1/2}\big)
\big\lVert\hat{\omega}\big\rVert_Y \big\lVert(\mu_1,\mu_2,\hat{\mu})\big\rVert_{\mathscr{H},R}.
\end{split}
\end{align}
Note that $\omega$ is a generalized eigensection
while $\mu_1^\mathrm{pp}$ is a $L^2$-eigensection,
we have
\begin{equation}
\label{eq13-pf-prop-ch5-map-L}
0 = \big\langle\omega,\mu_1^\mathrm{pp}\big\rangle_{Z_{1,\infty}}
= \big\langle\omega,\mu_1^\mathrm{pp}\big\rangle_{Z_{1,R}}
+ \big\langle\omega,\mu_1^\mathrm{pp}\big\rangle_{Y_{[R,\infty)}}.
\end{equation}
Since $\big(\mu_1^\mathrm{pp}\big)^\mathrm{zm}=0$,
by \eqref{eq-ch2-nz-decreasing}
and \eqref{eq5-pf-prop-ch3-approx-f-g-total} ,
we have
\begin{align}
\label{eq14-pf-prop-ch5-map-L}
\begin{split}
& \big\langle\omega,\mu_1^\mathrm{pp}\big\rangle_{Z_{1,R}}
= - \big\langle\omega,\mu_1^\mathrm{pp}\big\rangle_{Y_{[R,+\infty)}}
= - \big\langle\omega^\mathrm{nz},\big(\mu_1^\mathrm{pp}\big)^\mathrm{nz}\big\rangle_{Y_{[R,+\infty)}} \\
& = \mathscr{O}\big(e^{-aR}\big)
\big\lVert\omega\big\rVert_{\partial Z_{1,0}} \big\lVert\mu_1\big\rVert_{\partial Z_{1,0}}
= \mathscr{O}\big(e^{-aR}\big)
\big\lVert\hat{\omega}\big\rVert_Y \big\lVert(\mu_1,\mu_2,\hat{\mu})\big\rVert_{\mathscr{H},R}.
\end{split}
\end{align}
By \eqref{eq12-pf-prop-ch5-map-L}
and \eqref{eq14-pf-prop-ch5-map-L},
we have
\begin{equation}
\label{eq15-pf-prop-ch5-map-L}
\big\langle\omega,\mu_1\big\rangle_{Z_{1,R}} =
R \big\langle \hat{\omega},\hat{\mu} \big\rangle_Y
+ \mathscr{O}\big(R^{-1/2}\big)
\big\lVert\hat{\omega}\big\rVert_Y \big\lVert(\mu_1,\mu_2,\hat{\mu})\big\rVert_{\mathscr{H},R}.
\end{equation}
The same argument also yields
\begin{align}
\label{eq16-pf-prop-ch5-map-L}
\begin{split}
\big\langle\eta_j',\mu_j\big\rangle_{Z_{j,R}} =
R \big\langle \hat{\eta}',\hat{\mu} \big\rangle_Y
+ \mathscr{O}\big(R^{-1/2}\big)
\big\lVert\hat{\eta}'\big\rVert_Y \big\lVert(\mu_1,\mu_2,\hat{\mu})\big\rVert_{\mathscr{H},R},
\quad \text{for } j=1,2.
\end{split}
\end{align}
By the definition of $\alpha_{p,\LL}$
(see \eqref{eq-ch4-def-alpha}),
\eqref{eq4-pf-prop-ch5-map-L},
\eqref{eq15-pf-prop-ch5-map-L}
and \eqref{eq16-pf-prop-ch5-map-L},
we get
\begin{equation}
\label{eq17-pf-prop-ch5-map-L}
\big\langle\omega,\mu_1\big\rangle_{Z_{1,R}}
- \big\langle\eta_1',\mu_1\big\rangle_{Z_{1,R}}
- \big\langle\eta_2',\mu_2\big\rangle_{Z_{2,R}}
= \mathscr{O}\big(R^{-1/2}\big)
\big\lVert\hat{\omega}\big\rVert_Y \big\lVert(\mu_1,\mu_2,\hat{\mu})\big\rVert_{\mathscr{H},R}.
\end{equation}

The same argument as in
\eqref{eq2-pf-prop-ch5-metric-L} and \eqref{eq3-pf-prop-ch5-metric-L}
yields
\begin{equation}
\big\lVert(\omega,\hat{\omega})\big\rVert^2_{\mathscr{H},R}
\leqslant R \big\lVert\hat{\omega}\big\rVert^2_Y
+ \mathscr{O}\big(1\big) \big\lVert\hat{\omega}\big\rVert^2_Y.
\end{equation}
Thus, by
Proposition \ref{prop-ch5-map-H}
and \eqref{eq0-pf-prop-ch5-map-H},
we have
\begin{align}
\label{eq21-pf-prop-ch5-map-L}
\begin{split}
\big\langle (\eta_1,\eta_2,\hat{\eta}),(\mu_1,\mu_2,\hat{\mu}) \big\rangle_{\mathscr{H},R}
& = \big\langle\omega,\mu_1\big\rangle_{Z_{1,R}}
+ \mathscr{O}\big(e^{-aR}\big)
\big\lVert(\omega,\hat{\omega})\big\rVert_{\mathscr{H},R}
\big\lVert(\mu_1,\mu_2,\hat{\mu})\big\rVert_{\mathscr{H},R} \\
& = \big\langle\omega,\mu_1\big\rangle_{Z_{1,R}}
+ \mathscr{O}\big(Re^{-aR}\big)
\big\lVert\hat{\omega}\big\rVert_Y \big\lVert(\mu_1,\mu_2,\hat{\mu})\big\rVert_{\mathscr{H},R}.
\end{split}
\end{align}
The following identity follows from the definition of
$\big\langle\cdot,\cdot\big\rangle_{\mathscr{H},R}$,
\begin{equation}
\label{eq22-pf-prop-ch5-map-L}
\big\langle (\eta_1',\eta_2',\hat{\eta}'),(\mu_1,\mu_2,\hat{\mu}) \big\rangle_{\mathscr{H},R}
= \big\langle\eta_1',\mu_1\big\rangle_{Z_{1,R}} + \big\langle\eta_2',\mu_2\big\rangle_{Z_{2,R}}.
\end{equation}
By \eqref{eq17-pf-prop-ch5-map-L}-\eqref{eq22-pf-prop-ch5-map-L},
we obtain \eqref{eq6-pf-prop-ch5-map-L}.
The proof of Proposition \ref{prop-ch5-map-L} is completed.
\end{proof}

\begin{rem}
A special case of the problem addressed in this subsection
was considered by M{\"u}ller and Strohmaier \cite[Theorem 3.3]{MuStro},
where the Mayer-Vietoris exact sequence in question is given by
\begin{equation}
\cdots \rightarrow
H^p_\mathrm{rel}(Z_1,\C) \rightarrow
H^p_\mathrm{abs}(Z_1,\C) \rightarrow
H^p(Y,\C) \rightarrow
\cdots.
\end{equation}
\end{rem}


\subsection{A proof of Theorem \ref{thm-ch0-2}}
\label{ch5-4}

First we briefly recall some properties of the torsion
associated with an acyclic complex
(cf. \cite[\textsection 1a]{BGS1}).
We denote by $\mathscr{T}(V^\bullet,\partial)$ the torsion
associated with a finite acyclic complex $(V^\bullet,\partial)$ of Hermitian vector spaces.
\begin{itemize}
\item[-] Let $(V^\bullet[r],\partial)$ be the $r$-th right-shift of $(V^\bullet,\partial)$,
i.e., $V^k[r] = V^{k-r}$,
then
\begin{equation}
\label{eq-ch5-t1}
\mathscr{T}(V^\bullet[r],\partial) = \big( \mathscr{T}(V^\bullet,\partial) \big)^{(-1)^r}.
\end{equation}
\item[-] If $(V^\bullet,\partial)$ is the direct sum
of two complexes $(V^\bullet_1,\partial_1)$ and $(V^\bullet_2,\partial_2)$,
then
\begin{equation}
\label{eq-ch5-t2}
\mathscr{T}(V^\bullet,\partial) = \mathscr{T}(V^\bullet_1,\partial_1) \mathscr{T}(V^\bullet_2,\partial_2).
\end{equation}
\item[-] For a short acyclic complex
$(V^\bullet,\partial): 0 \rightarrow V^1 \rightarrow V^2 \rightarrow 0$,
let $A$ be the matrix of $\partial : V^1 \rightarrow V^2$ with respect to any orthogonal bases,
then
\begin{equation}
\label{eq-ch5-t3}
\mathscr{T}(V^\bullet,\partial) = \big| \det(A) \big|.
\end{equation}
\end{itemize}

Let $\LL_{j,\mathrm{abs}}^{\bullet,\perp} \subseteq \hh(Y,F)$
be the orthogonal complement of $\LL_{j,\mathrm{abs}}^{\bullet}$
with respect to the $L^2$-metric induced by Hodge theory.
We construct $S_j^p\in\mathrm{End}\big(\hhe^p(Y,F)\big)$ as follows,
\begin{equation}
\label{eq-ch5-def-S}
S_j^p = \mathrm{Id}_{\LL_{j,\mathrm{abs}}^p} - \mathrm{Id}_{\LL_{j,\mathrm{abs}}^{p, \perp}} .
\end{equation}
We identify $\hhe^p(Y,F)$ with $\hhe^p(Y,F)du$ via $du\wedge$.
Then $S_j^p$ also acts on $\hhe^p(Y,F)du$.
By \eqref{eq-ch2-LL-C},
\eqref{eq-ch2-def-LL-bd} and
\eqref{eq-ch2-def-LL-bd-2},
we have
\begin{equation}
\label{eq-ch5-C-S}
C_j^p =
\left(
\begin{array}{cc}
S_j^p & 0 \\
0     & - S_j^{p-1}
\end{array} \right).
\end{equation}

We equip the exact sequence \eqref{eq-ch4-mv-LL} with the metrics
$\big\lVert\,\cdot\,\big\rVert_{\LL_{1,\mathrm{rel}}^\bullet}$,
$\big\lVert\,\cdot\,\big\rVert_{\LL_{2,\mathrm{abs}}^\bullet}$ and
$\big\lVert\,\cdot\,\big\rVert_{\LL_1^\bullet \cap \LL_2^\bullet}$
in \textsection \ref{ch5-2}.
Let $T_\LL$ be its torsion.

\begin{prop}
\label{prop-ch5-TL}
The following identities hold,
\begin{align}
\label{eq-prop-ch5-TL}
\begin{split}
T_\LL = \; & \prod_{p=0}^n {\det}^\ast\Big(\frac{2-S^p_{1}\circ S^p_{2}-S^p_{2}\circ S^p_{1}}{4}\Big)^{\frac{1}{4}(-1)^p} \\
= \; & \prod_{p=0}^n {\det}^\ast\Big(\frac{2-C^p_{12}-(C^p_{12})^{-1}}{4}\Big)^{\frac{1}{4}(-1)^pp} .
\end{split}
\end{align}
\end{prop}
\begin{proof}
The exact sequence \eqref{eq-ch4-mv-LL} is an orthogonal sum of the following exact sequences,
\begin{align}
\label{eq1-pf-prop-ch5-TL}
\begin{split}
& 0 \rightarrow
\LL^p_{1,\mathrm{rel}}\cap\LL^p_{2, \mathrm{rel}} \rightarrow
\LL_1^p\cap\LL_2^p \rightarrow
\LL^p_{1,\mathrm{abs}}\cap\LL^p_{2, \mathrm{abs}} \rightarrow
0, \\
& 0 \rightarrow
\LL^p_{2,\mathrm{abs}}\cap(\LL^p_{1, \mathrm{abs}}\cap \LL^p_{2, \mathrm{abs}})^{\perp} \overset{\delta_{p,\LL}} \longrightarrow
\LL^{p+1}_{1,\mathrm{rel}} \cap(\LL^{p+1}_{1,\mathrm{rel}}\cap \LL^{p+1}_{2,\mathrm{rel}})^{\perp} \rightarrow
0.
\end{split}
\end{align}
By \eqref{eq-ch5-t1}-\eqref{eq-ch5-t3},
the torsion of the first row in \eqref{eq1-pf-prop-ch5-TL} vanishes.
We turn to study the second row in \eqref{eq1-pf-prop-ch5-TL}.
By \eqref{eq-ch2-def-LL-bd-2},
we have the following commutative diagram
whose vertical maps are bijective,
\begin{align}
\label{eq2-pf-prop-ch5-TL}
\begin{split}
\xymatrix{
0 \ar[r] &
\LL^p_{2, \mathrm{abs}}\cap(\LL^p_{1, \mathrm{abs}}\cap \LL^p_{2, \mathrm{abs}})^{\perp} \ar[r]\ar[d]^{\mathrm{Id}} &
\LL^{p+1}_{1, \mathrm{rel}} \cap(\LL^{p+1}_{1, \mathrm{rel}}\cap \LL^{p+1}_{2, \mathrm{rel}})^{\perp} \ar[r]\ar[d]^{\cntrtu}  & 0  \\
0 \ar[r] &
\LL^p_{2, \mathrm{abs}}\cap(\LL^p_{1, \mathrm{abs}}\cap \LL^p_{2, \mathrm{abs}})^{\perp} \ar@{-->}[r] &
\LL^{p, \perp}_{1, \mathrm{abs}} \cap (\LL^{p, \perp}_{1, \mathrm{abs}} \cap \LL^{p, \perp}_{2,\mathrm{abs}})^{\perp} \ar[r] & 0  \;.
}
\end{split}
\end{align}
Let
\begin{equation}
\label{eq3-pf-prop-ch5-TL}
P_p : \; \hhe^p(Y,F) \rightarrow \LL^p_{2, \mathrm{abs}}, \quad
Q_p : \; \hhe^p(Y,F) \rightarrow \LL^{p,\perp}_{1, \mathrm{abs}}
\end{equation}
be orthogonal projections.
We have
\begin{equation}
\label{eq4-pf-prop-ch5-TL}
\LL^p_{2, \mathrm{abs}}\cap(\LL^p_{1, \mathrm{abs}}\cap \LL^p_{2, \mathrm{abs}})^{\perp} = \im(P_pQ_p), \quad
\LL^{p, \perp}_{1, \mathrm{abs}} \cap (\LL^{p, \perp}_{1, \mathrm{abs}} \cap \LL^{p, \perp}_{2,\mathrm{abs}})^{\perp} = \im(P_pQ_p).
\end{equation}
Moreover,
by the definition of $\delta_{p,\LL}$
(see \eqref{eq-ch4-def-delta}),
the second row in \eqref{eq2-pf-prop-ch5-TL} is given by
\begin{equation}
\label{eq5-pf-prop-ch5-TL}
0 \rightarrow
\im(P_pQ_p) \overset{Q_p} \longrightarrow
\im(Q_pP_p) \rightarrow 0.
\end{equation}
By Lemma \ref{lab-prop-torsion-proj} in Appendix
and \eqref{eq2-pf-prop-ch5-TL}-\eqref{eq5-pf-prop-ch5-TL},
the torsion of the second row in \eqref{eq1-pf-prop-ch5-TL} is
\begin{equation}
\label{eq6-pf-prop-ch5-TL}
{\det}^\ast\big(1-P_p-Q_p+P_pQ_p+Q_pP_p\big)^\frac{1}{4}.
\end{equation}
By \eqref{eq-ch5-def-S},
we have
\begin{equation}
\label{eq7-pf-prop-ch5-TL}
P_p = \frac{1}{2}(1+S^p_2), \quad
Q_p = \frac{1}{2}(1-S^p_1).
\end{equation}
By \eqref{eq-ch5-t1}, \eqref{eq-ch5-t2},
\eqref{eq6-pf-prop-ch5-TL}
and \eqref{eq7-pf-prop-ch5-TL},
we get the first line in \eqref{eq-prop-ch5-TL}.

We denote
\begin{equation}
\widehat{I}_{p} = {\det}^\ast\big(\frac{2-S^p_1\circ S^p_2-S^p_2\circ S^p_1}{4}\big)^{\frac{1}{4}}, \quad
I_p = {\det}^\ast\big(\frac{2-C^p_{12}-(C^p_{12})^{-1}}{4}\big)^{\frac{1}{4}}.
\end{equation}
By \eqref{eq-ch5-C-S},
we have $I_p = \widehat{I}_{p}\cdot \widehat{I}_{p+1}$,
which leads to the second line in \eqref{eq-prop-ch5-TL}.
The proof of Proposition \ref{prop-ch5-TL} is completed.
\end{proof}

\begin{proof}[Proof of Theorem \ref{thm-ch0-2}]
We start by equipping the vector spaces in \eqref{eq-ch5-filt-mv-harmonic} with metrics.
All the metrics mentioned bellow are defined in \textsection \ref{ch5-2}, \ref{ch5-3}.
\begin{itemize}
\item[-] The vector spaces in the second (resp. first) row are equipped with (resp. the restrictions of)
the metrics $\big\lVert\,\cdot\,\big\rVert_{\mathscr{H},R}$.
\item[-] The vector spaces in the third row are equipped with the metrics
$\big\lVert\,\cdot\,\big\rVert_{\LL^\bullet_{1,\mathrm{rel}}}$,
$\big\lVert\,\cdot\,\big\rVert_{\LL^\bullet_{2,\mathrm{abs}}}$ and
$\big\lVert\,\cdot\,\big\rVert_{\LL^\bullet_1 \cap \LL^\bullet_2}$.
\end{itemize}

For $i=1,2,3$,
we denote by $T_{i,\bullet}$ the torsion of the $i$-th row in \eqref{eq-ch5-filt-mv-harmonic}.
For $j=1,\cdots,3n+3$,
we denote by $T_{\bullet,j}$ be the torsion of the $j$-th column in \eqref{eq-ch5-filt-mv-harmonic}.
By \cite[Theorem 1.20]{BGS1},
we have
\begin{equation}
\label{eq1-pf-thm-ch0-2}
T_{1,\bullet}\,T_{2,\bullet}^{-1}\,T_{3,\bullet} = \prod_{k=1}^{3n+3}T_{\bullet,k}^{(-1)^{k+1}}.
\end{equation}
By Proposition \ref{prop-ch5-metric-L}
and \eqref{eq-ch5-t1}-\eqref{eq-ch5-t3},
we have
\begin{align}
\begin{split}
T_{\bullet,3p+1} & = \big( 1 + \mathscr{O}\big(R^{-1}\big) \big) R^{\frac{1}{2}\dim \LL_{1,\mathrm{rel}}^p}, \\
T_{\bullet,3p+2} & = \big( 1 + \mathscr{O}\big(R^{-1}\big) \big) (2R)^{\frac{1}{2}\dim \LL_1^p \cap \LL_2^p}, \\
T_{\bullet,3p+3} & = \big( 1 + \mathscr{O}\big(R^{-1}\big) \big) R^{\frac{1}{2}\dim \LL_{2,\mathrm{abs}}^p}.
\end{split}
\end{align}
By \eqref{eq-ch5-t1}-\eqref{eq-ch5-t3},
we have
\begin{equation}
T_{1,\bullet} = 1.
\end{equation}
By Proposition \ref{prop-ch5-metric-H}
and \eqref{eq-ch5-t1}-\eqref{eq-ch5-t3},
we have
\begin{equation}
T_{2,\bullet} = \big( 1 + \mathscr{O}\big(e^{-aR}\big) \big)  \mathscr{T}_R.
\end{equation}
By Proposition \ref{prop-ch5-map-L}
and \eqref{eq-ch5-t1}-\eqref{eq-ch5-t3},
we have
\begin{equation}
T_{3,\bullet} = \big( 1 + \mathscr{O}\big(R^{-1}\big) \big)
\Big( \prod_{p=0}^n 2^{ (-1)^p \dim\im(\alpha_{p,\LL}) } \Big)
\Big( \prod_{p=0}^n R^{ (-1)^p \dim\im(\delta_{p,\LL}) } \Big)
T_\LL.
\end{equation}
It follows from the exactness of \eqref{eq-ch4-mv-LL} that
\begin{align}
\label{eq6-pf-thm-ch0-2}
\begin{split}
& \dim \LL_1^p\cap\LL_2^p = \dim \im (\alpha_{p,\LL}) + \dim \im (\beta_{p,\LL}), \\
& \sum_{p=0}^n (-1)^p \big(
\dim \LL_{1,\mathrm{rel}}^p
- \dim \LL_1^p \cap \LL_2^p
+ \dim \LL_{2,\mathrm{abs}}^p \big) = 0 .
\end{split}
\end{align}
By Lemma \ref{lem-ch4-chi-abd},
Proposition \ref{prop-ch5-TL}
and \eqref{eq1-pf-thm-ch0-2}-\eqref{eq6-pf-thm-ch0-2},
we get \eqref{eq-thm-ch0-2}.
The proof of Theorem \ref{thm-ch0-2} is completed.
\end{proof}

\subsection{Anomaly formulas and a proof of Theorem \ref{thm-ch0-3}}
\label{ch5-5}

Here we adopt the notations in Section \ref{ch1-1}.
Moreover, we assume that the Riemannian metric $g^{TX}$ is product near
$Y = \partial X$.
Let $\n^{TX}$ be the Levi-Civita connection on $TX$.  Let $R^{TX}=\left(\n^{TX}\right)^2$ be its curvature.
Let $o(TX)$ be the orientation bundle of $TX$.
The Euler form (cf. \cite[(4.9)]{BZ92}) is defined as
\begin{equation}
e(TX,\n^{TX}) = \mathrm{Pf}\Big[\frac{R^{TX}}{2\pi}\Big] \in \Omega^{\dim X}\big(X,o(TX)\big) .
\end{equation}

Let ${g^{TX}}'$ be another Riemannian metric on $TX$.
Let ${\n^{TX}}'$ be the associated Levi-Civita connection.
We assume that
$g^{TX}$ and ${g^{TX}}'$ coincide on a neighborhood of $Y = \partial X$.
For $s\in[0,1]$,
set $g^{TX}_s = (1-s)g^{TX}+s{g^{TX}}'$.
Let $\n^{TX}_s$ be the Levi-Civita connection associated with $g^{TX}_s$.
Set
\begin{align}
\begin{split}
& \widetilde{e}\big(TX,\n^{TX},{\n^{TX}}'\big) \\
& = \int_0^1
\left\{\derpar{}{b}\Big|_{b=0} \mathrm{Pf}\left[ \frac{1}{2\pi}\left(\n^{TX}_s\right)^2 +
\frac{b}{2\pi}\left( \derpar{}{s}\n^{TX}_s - \frac{1}{2}\left[\n^{TX}_s,\big(g^{TX}_s\big)^{-1}\derpar{}{s}g^{TX}_s\right] \right) \right] \right\}
ds.
\end{split}
\end{align}
We remark that
$\widetilde{e}\big(TX,\n^{TX},{\n^{TX}}'\big)$
vanishes near $Y=\partial X$.
By \cite[(4.10)]{BZ92},
we have
\begin{equation}
d \, \widetilde{e}\big(TX,\n^{TX},{\n^{TX}}'\big)
= e(TX,{\n^{TX}}') - e(TX,\n^{TX}).
\end{equation}

We define a closed one-form (see \cite[Prop. 4.6]{BZ92})
\begin{equation}
\theta(F,h^F) = \tr\big[\left(h^F\right)^{-1}\n^Fh^F\big] \in \Omega^1(X).
\end{equation}

Let $\big\lVert\,\cdot\,\big\rVert^\mathrm{RS}_{\det H^\bullet_\bd(X,F)}$
and ${\big\lVert\,\cdot\,\big\rVert^\mathrm{RS}}'_{\hspace{-4mm}\det H^\bullet_\bd(X,F)}$
be the Ray-Singer metrics in Defitition \ref{def-ch1-rs-metric}
associated with $g^{TX}$ and ${g^{TX}}'$.
The following theorem is a special case of \cite[Theorem 0.1]{BruMa06}.

\begin{thm}
\label{thm-ch5-anomaly}
We have
\begin{equation}
\log \left( \frac{\,{\big\lVert\,\cdot\,\big\rVert^\mathrm{RS}}'_{\hspace{-4mm}\det H^\bullet_\bd(X,F)}}
{\big\lVert\,\cdot\,\big\rVert^\mathrm{RS}_{\det H^\bullet_\bd(X,F)}} \right)
=
- \frac{1}{2} \int_X \theta(F,h^F) \, \widetilde{e}\big(TX,\n^{TX},{\n^{TX}}'\big).
\end{equation}
\end{thm}

\begin{proof}[Proof of Theorem \ref{thm-ch0-3}]
We only show that $\big\lVert\rho_R\big\rVert^\mathrm{RS}_{\lambda_R(F)}$ is independent of $R$.
The rest of the proof is explained in the paragraph containing \eqref{eq-thm-ch0-3-equiv}.

Let $0 < R < R'$.
Let $\phi : [-R,R] \rightarrow [-R',R']$ be a smooth function such that
\begin{align}
\begin{split}
& \phi'>0, \quad
\phi(0) = 0, \quad
\phi(\pm R) = \pm R', \\
& \phi'(u) = 1 \quad \text{for } u\in [-R,-2R/3] \,\cup\, [-R/3,R/3] \,\cup\, [2R/3,R].
\end{split}
\end{align}
Let $\varphi_0 : Z_R \rightarrow Z_{R'}$ be the diffeomorphism defined by $\phi$
in the same way as \eqref{eq-ch3-def-varphiR}.
For $j=1,2$,
set $\varphi_j = \varphi_0\big|_{Z_{j,R}} : Z_{j,R} \rightarrow Z_{j,R'}$,
which is also a diffeomorphism.

We will use the convention
$Z_{0,R}=Z_R$ and
$H^\bullet_\mathrm{bd}(Z_{0,R},F) = H^\bullet(Z_R,F)$.
Let
\begin{equation}
\varphi^{\det}_j: \det H^\bullet_\mathrm{bd}(Z_{j,R},F) \rightarrow \det H^\bullet_\mathrm{bd}(Z_{j,R'},F),
\quad j=0,1,2
\end{equation}
be the isomorphism induced by $\varphi_j$.
Since $\varphi_j$ is isometric near $\partial Z_{j,R}$,
by Theorem \ref{thm-ch5-anomaly},
we have
\begin{equation}
\log\left(\frac{\big\lVert\rho_{R'}\big\rVert^\mathrm{RS}_{\lambda_{R'}(F)}}
{\big\lVert\rho_R\big\rVert^\mathrm{RS}_{\lambda_R(F)}}\right)
= \sum_{j=0}^2 (-1)^{(j-1)(j-2)/2}
\log\left(\frac{\big\lVert\varphi_j^{\det}(\cdot)\big\rVert^\mathrm{RS}_{\det H^\bullet_\mathrm{bd}(Z_{j,R'},F)}}
{\big\lVert\,\cdot\,\big\rVert^\mathrm{RS}_{\det H^\bullet_\mathrm{bd}(Z_{j,R},F)}}\right)
= 0.
\end{equation}
The proof of Theorem \ref{thm-ch0-3} is completed.
\end{proof}


\section{Appendix}
\label{ch6}

The appendix contains several technical results.

Let $A : V\rightarrow W$ be a linear map between Hermitian vector spaces of the same dimension.
We denote by $\det(A)$ the determinant of the matrix of $A$ under any orthogonal bases,
which is well-defined up to $\mathrm{U}(1):=\big\{z\in\C\;:\;|z|=1\big\}$.

\begin{lemma}
\label{lab-prop-torsion-proj}
Let $V$ be a Hermitian vector space.
Let $H_1,H_2\subseteq V$ be vector subspaces.
Let $P_j\in\mathrm{End}(V)$ ($j=1,2$) be the orthogonal projection to $H_j$.
We have
\be\label{e.347}
 \left| {\det}(P_1|_{\mathrm{Im}(P_2P_1)}) \right| =&
\left| {\det}(P_2|_{\mathrm{Im}(P_1P_2)}) \right| \\
=& {\det}^\ast\big(\mathrm{Id} - P_1 - P_2 + P_1P_2 + P_2P_1\big)^\frac{1}{4} .
\end{aligned}\end{align}
\end{lemma}
\begin{proof}
We claim that
there exists an orthogonal decomposition $V=\bigoplus_k V_k$ such that
$\dim V_k \leqslant 2$ and $H_j=\bigoplus_k \left(V_k \cap H_j\right)$ for $j=1,2$.
Once the claim is proved, we may suppose that $\dim V\leqslant 2$.
Then the only non trivial case is that $\dim V=2$ and $\dim H_1 = \dim H_2 = 1$.
We may suppose that
\be\label{e.348}
V=\C^2 ,\hspace{5mm}
H_1 = \C \left(
\begin{array}{c}
\hspace{-1mm} 1 \hspace{-1mm} \\
\hspace{-1mm} 0 \hspace{-1mm}
\end{array} \right), \hspace{5mm}
H_2 = \C \left(
\begin{array}{c}
\hspace{-1mm} \cos\theta \hspace{-1mm} \\
\hspace{-1mm} \sin\theta \hspace{-1mm}
\end{array} \right), \hspace{5mm}
\text{with } 0\leqslant\theta\leqslant\frac{\pi}{2} .
\end{aligned}\end{align}
We have
$\left| {\det}(P_1|_{\mathrm{Im}(P_2P_1)}) \right| =
\left| {\det}(P_2|_{\mathrm{Im}(P_1P_2)}) \right| = \cos\theta$, and
\be\label{e.349}
P_1 = \left(
\begin{array}{cc}
1 & 0 \\
0 & 0
\end{array} \right) ,\hspace{5mm}
P_2 = \left(
\begin{array}{cc}
\cos^2\theta & \cos\theta\sin\theta \\
\cos\theta\sin\theta & \sin^2\theta
\end{array} \right) .
\end{aligned}\end{align}
Then \eqref{e.347} follows from a direct calculation.

Now we prove the claim.
The operator
$P_1P_2P_1\big|_{H_1}$ and $P_2P_1P_2\big|_{H_2}$
are self-adjoint.
Let
\begin{equation}
H_1 = \bigoplus_{0\leqslant\lambda\leqslant 1} H_1^\lambda, \quad
H_2 = \bigoplus_{0\leqslant\lambda\leqslant 1} H_2^\lambda
\end{equation}
be the associated spectral decomposition,
i.e.,
$P_1P_2P_1\big|_{H_1^\lambda} = \lambda\mathrm{Id}$ and
$P_2P_1P_2\big|_{H_2^\lambda} = \lambda\mathrm{Id}$.
In particular,
$H_1^1 = H_2^1 = H_1\cap H_2 $, $H_1^0 = H_1 \cap H_2^\perp $, $H_2^0 = H_2 \cap H_1^\perp$.
We get the orthogonal decomposition
\be\label{e.353}
V = (H_1+H_2)^\perp \oplus (H_1\cap H_2) \oplus (H_1\cap H_2^\perp) \oplus (H_2\cap H_1^\perp) \oplus \bigoplus_{{0<\lambda<1}}(H_1^\lambda+H_2^\lambda) ,
\end{aligned}\end{align}
which is invariant under the actions of $P_1$ and $P_2$,
as for any $y\in H_2^\lambda$,
\begin{equation}
P_1P_2P_1(P_1y) = P_1P_2P_1(P_1P_2y) = P_1(P_2P_1P_2y) = \lambda P_1y.
\end{equation}
Thus $P_1H_2^\lambda \subseteq H_1^\lambda$.
The problem is reduced to each block.
The claim is trivial for
$H_1\cap H_2$, $(H_1+H_2)^\perp$, $H_1 \cap H_2^\perp$ and $H_2 \cap H_1^\perp$.
Now we suppose that $H = H_1^\lambda+H_2^\lambda$ with $0<\lambda<1$.
Let $\big(v_k\big)_{1\leqslant k\leqslant r}$ be an orthogonal basis of $H_1^\lambda$.
Let $V_k\subseteq H$ be the vector subspace spanned by $\{v_k,P_2v_k\}$.
Then $\big(V_k\big)_{1\leqslant k\leqslant r}$ satisfies the desired properties.
The proof of Proposition \ref{lab-prop-torsion-proj} is completed.
\end{proof}


Let $(V,\langle\cdot,\cdot\rangle)$ be a Hermitian vector space of dimension $m$.
Let $(a,b)\subseteq\R$ be an open interval.
Let $C: (a,b) \rightarrow \End(V)$ be an analytic function
such that $C(\lambda)$ is unitary for any $\lambda\in (a,b)$.
The following theorem comes from  \cite[\textsection 2.6, Theorem 6.1]{Kato95}.

\begin{thm}
\label{lab-thm-holomophic-eigenvalue-c-matrix}
There exist real analytic functions
$\theta_1(\lambda), \cdots, \theta_m(\lambda)$
such that
\begin{equation}
\big\{e^{i\theta_1(\lambda)},\cdots,e^{i\theta_m(\lambda)}\big\}
= \Sp\big(C(\lambda)\big).
\end{equation}
Moreover,
there exist orthogonal projections
$P_1(\lambda),\cdots, P_m(\lambda)\in\End(V)$
depending analytically on $\lambda$ such that
\begin{align}
\label{eq-thm-app-C}
\begin{split}
& \mathrm{Id} = P_1(\lambda) + \cdots + P_m(\lambda), \quad
P_j(\lambda)P_k(\lambda) = 0, \quad \text{for } j\neq k,\\
& C(\lambda) = e^{i\theta_1(\lambda)} P_1(\lambda) + \cdots + e^{i\theta_m(\lambda)} P_m(\lambda).
\end{split}
\end{align}
\end{thm}

For $R>2$,
we consider the equation
\begin{equation}
\label{eq-app-e-C}
e^{4iR\lambda}C(\lambda)v = v,
\end{equation}
where $\lambda\in (a,b)$ and $v\in V$.
By Theorem \ref{lab-thm-holomophic-eigenvalue-c-matrix},
for $R$ and $\lambda$ fixed,
\eqref{eq-app-e-C}, viewed as an equation of $v$,
admits a non trivial solution
if and only if one of $4R\lambda+\theta_1(\lambda),\cdots,4R\lambda+\theta_m(\lambda)$ lies in $2\pi\Z$.
For $R>2$, set
\begin{align}
\begin{split}
\Lambda_R(C)  & =
\big\{ \rho > 0 :\; \det \big( e^{4iR\rho}C(\rho) - \mathrm{Id} \big) = 0 \big\}, \quad \\
\Lambda^*_R(C) & =
\big\{ \lambda > 0:\; \det \big( e^{4iR\lambda}C(0) - \mathrm{Id} \big) = 0 \big\}.
\end{split}
\end{align}

\begin{prop}
\label{lab-prop-Lambda-Lambda-star-estimation}
We fix $\kappa>0$.
There exists $a>0$ such that
for $R^{-1+\kappa}\leqslant\gamma\leqslant1$
and $f\in\mathscr{C}^1(\R)$,
\be\label{e.406}
\Big| \sum_{\rho\in\Lambda_R(C),\;|\rho|<\gamma} f(\rho)
- \sum_{\lambda\in\Lambda^*_R(C),\;|\lambda|<\gamma} f(\lambda) \Big|
\leqslant a \gamma^2 \sup_{|x|\leqslant\gamma} |f'(x)|
+ a \gamma \sup_{|x|\leqslant\gamma} |f(x)|  .
\end{aligned}\end{align}
\end{prop}
\begin{proof}
By Theorem \ref{lab-thm-holomophic-eigenvalue-c-matrix},
we may suppose that $V=\C$ and $C(\rho)=e^{i\alpha(\rho)}$,
where $\rho\mapsto\alpha(\rho)$ is an analytic function.
The rest of the proof is a direct estimate.
We leave it to the reader.
\end{proof}

Set
\begin{equation}
\theta_{C,R}(s) = - \sum_{\lambda\in\Lambda^*_R(C)}\lambda^{-2s}.
\end{equation}

\begin{prop}
\label{prop-Lambdazeta-der0}
We assume that $\Sp\big(C(0)\big) = \overline{\Sp\big(C(0)\big)}$.
We denote $r=\dim \Ker(C(0)-\Id)$.
We have
\begin{equation}
\label{eq-Lambdazeta-der0}
\theta_{C,R}{}'(0) = r \log (2R) +\log 2\cdot\dim V +
\frac{1}{2} \log {\det}^* \Big( \frac{2- C(0) - {C(0)}^{-1}}{4} \Big).
\end{equation}
\end{prop}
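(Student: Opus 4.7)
\textbf{Proof plan for Proposition \ref{prop-Lambdazeta-der0}.}
The strategy is to diagonalise $C(0)$ and reduce $\zeta_{C,R}(s)$ to a sum of Hurwitz zeta functions, then apply Lerch's formula. Since $C(0)$ is unitary, Theorem \ref{lab-thm-holomophic-eigenvalue-c-matrix} gives eigenvalues $e^{i\theta_1},\dots,e^{i\theta_m}$ which we normalise so that $\theta_k \in [0,2\pi)$. The hypothesis $\spec(C(0))=\overline{\spec(C(0))}$ allows me to organise these $\theta_k$ as: $r$ zeros (eigenvalue $1$), $r_-$ values equal to $\pi$ (eigenvalue $-1$), and $\ell$ conjugate pairs $\{\theta,\,2\pi-\theta\}$ with $\theta\in(0,\pi)$, so that $m=r+r_-+2\ell$.

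For each eigenvalue $e^{i\theta_k}$, the positive solutions of $e^{4iR\lambda}e^{i\theta_k}=1$ are $\lambda=(2\pi n-\theta_k)/(4R)$, $n\geqslant 1$, so setting $a_k = 1-\theta_k/(2\pi) \in (0,1]$ one obtains
\begin{equation*}
\zeta_{C,R}(s) = -\sum_{k=1}^{m} \left(\frac{2R}{\pi}\right)^{\!2s} \zeta_H(2s,a_k),
\end{equation*}
where $\zeta_H$ is the Hurwitz zeta function (with $\zeta_H(\,\cdot\,,1)=\zeta$). I will then use the classical values $\zeta_H(0,a)=\tfrac{1}{2}-a$ and $\partial_s\zeta_H(0,a)=\log\Gamma(a)-\tfrac{1}{2}\log(2\pi)$ (Lerch) to get
\begin{equation*}
\zeta_{C,R}{}'(0) = \log\!\left(\tfrac{2R}{\pi}\right)\sum_{k=1}^m(2a_k-1) - 2\sum_{k=1}^m\log\Gamma(a_k) + m\log(2\pi).
\end{equation*}

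The key arithmetic step is to exploit the conjugation symmetry. It forces $\sum_k \theta_k = r_-\pi + 2\pi\ell$, hence $\sum_k(2a_k-1)=m-2(r_-/2+\ell)-0\cdot r=r$, which isolates the $r\log R$ contribution. For the gamma sum, eigenvalue $1$ gives $\log\Gamma(1)=0$, eigenvalue $-1$ gives $\log\Gamma(\tfrac{1}{2})=\tfrac{1}{2}\log\pi$, and each conjugate pair $\{\theta,2\pi-\theta\}$ satisfies $a_k+a_{k'}=1$, so the reflection formula $\Gamma(a)\Gamma(1-a)=\pi/\sin(\pi a)$ gives $\log\Gamma(a_k)+\log\Gamma(a_{k'})=\log\pi-\log\sin(\theta/2)$. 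Collecting these yields
\begin{equation*}
\zeta_{C,R}{}'(0) = r\log(2R) + m\log 2 + 2\sum_{\mathrm{pairs}}\log\sin(\theta/2),
\end{equation*}
after the $\log\pi$ terms cancel against $m\log(2\pi)-m\log\pi=m\log 2$.

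Finally, I identify the remaining sum with the determinant term: the eigenvalues of $\tfrac{2-C(0)-C(0)^{-1}}{4}$ are $\sin^2(\theta_k/2)$, so $\sin^2(\theta_k/2)=0$ precisely when $\theta_k=0$ (contributing to $\ker$), $\sin^2(\pi/2)=1$ contributes nothing to $\log\det^*$, and each conjugate pair contributes $2\cdot 2\log\sin(\theta/2)$. Hence $\tfrac{1}{2}\log\det^*=2\sum_{\mathrm{pairs}}\log\sin(\theta/2)$ exactly matches the remaining term. The only real care needed is the bookkeeping of the pairing and the boundary cases $\theta_k\in\{0,\pi\}$; the main potential obstacle is ensuring the contribution of eigenvalue $1$ correctly produces $\log(2R)$ and not $\log R$ or $\log(4R)$, which comes out right because the normalisation $a_k=1$ plugs $\zeta_H(\,\cdot\,,1)=\zeta$ into a formula whose Riemann-zeta evaluations $\zeta(0)=-\tfrac{1}{2}$, $\zeta'(0)=-\tfrac{1}{2}\log(2\pi)$ conspire precisely to give $\log(4R)$, which combines with the constants from the other eigenvalues to the claimed identity.
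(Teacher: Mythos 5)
Your proposal is correct and follows essentially the same route as the paper: diagonalise $C(0)$, reduce $\zeta_{C,R}$ to Hurwitz zeta functions, and evaluate at $s=0$ via the Lerch formula, sorting eigenvalues into $1$, $-1$, and conjugate pairs (the paper just states the resulting Hurwitz-zeta evaluations and checks the three cases, whereas you carry out the Lerch and reflection-formula computation explicitly). All the bookkeeping in your argument — the identity $\sum_k(2a_k-1)=r$, the cancellation of the $\log\pi$ terms, and the identification of $\sin^2(\theta_k/2)$ with the eigenvalues of $\tfrac{2-C(0)-C(0)^{-1}}{4}$ — checks out.
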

\begin{proof}
As a special case of the Hurwitz zeta function (cf. \cite[\textsection 7]{Weil99}), we have
\begin{equation}
\label{lab-eq-pf-Lambdazeta-der0}
- \derpar{}{s}\Big|_{s=0} \sum_{k=1}^{+\infty} \Big(\frac{2\pi k - \alpha}{4R}\Big)^{-2s}
= \left\{
\begin{array}{rl}
\log (4R)\quad & \text{for } \alpha = 0 , \\
\frac{1}{2} \log (2-2\cos\alpha) & \text{for } 0<\alpha \leqslant \pi .
\end{array} \right.
\end{equation}

Since $C(0)$ is diagonalizable,
it suffices to consider the following cases.

Case 1. $m=1$, $r=1$, $C(0)=1$,
then \eqref{eq-Lambdazeta-der0} is equivalent to \eqref{lab-eq-pf-Lambdazeta-der0} with $\alpha = 0$.

Case 2. $m=1$, $r=0$, $C(0)=-1$,
then \eqref{eq-Lambdazeta-der0} is equivalent to \eqref{lab-eq-pf-Lambdazeta-der0} with $\alpha = \pi$.

Case 3. $m=2$, $r=0$, $\Sp(C(0))=\{e^{i\alpha},e^{-i\alpha}\}$ with $\alpha\in\,(0,\pi)$,
then \eqref{eq-Lambdazeta-der0} is equivalent to \eqref{lab-eq-pf-Lambdazeta-der0} with $\alpha\in\,(0,\pi)$.

This completes the proof of Proposition \ref{prop-Lambdazeta-der0}.
\end{proof}

\bibliographystyle{plain}

\end{document}